\providecommand{\tabularnewline}{\\}
\providecommand{\algorithmname}{Algorithm}
\theoremstyle{plain}
\newtheorem{thm}{\protect\theoremname}
\theoremstyle{plain}
\newtheorem{prop}[thm]{\protect\propositionname}
\theoremstyle{remark}
\newtheorem{rem}[thm]{\protect\remarkname}
\theoremstyle{plain}
\newtheorem{cor}[thm]{\protect\corollaryname}
\theoremstyle{plain}
\newtheorem{lem}[thm]{\protect\lemmaname}
\def\documenttitle{Lost customer approximations of SOQN with application to RMFS}
\itshape\color{dkgreen},       
\newcommand{\LostCustomers}{lost customers}
\newcommand{\similar}{modified}
\newcommand{\withLostCustomers}{with lost customers}
\newcommand{\stabilityNetwork}{stability network}
\newcommand{\StabilityNetwork}{Stability network}
\providecommand{\corollaryname}{Corollary}
\providecommand{\lemmaname}{Lemma}
\providecommand{\propositionname}{Proposition}
\providecommand{\remarkname}{Remark}
\providecommand{\theoremname}{Theorem}
\begin{document}
	
\title{\vspace{-1.5cm}Lost-customers approximation of semi-open queueing networks with
	backordering\\
	- An application to minimise the number of robots in robotic mobile
	fulfilment systems}

\date{}
\renewcommand\Affilfont{\small \itshape}
\author[a,c]{Sonja Otten} 
\author[a]{Ruslan Krenzler}
\author[a]{Lin Xie} 
\affil[a]{Leuphana University of L\"uneburg, Universit\"atsallee 1, 21335 L\"uneburg } 
\author[b]{Hans Daduna}
\affil[b]{University of Hamburg, Bundesstra\ss e 55, 20146 Hamburg}
\author[c]{Karsten Kruse}
\affil[c]{Hamburg University of Technology, Am Schwarzenberg-Campus 3, 21073 Hamburg}
\renewcommand\Authands{, }

\maketitle
\vspace{-1.0cm}
\begin{abstract}
	\noindent We consider a semi-open queueing network (SOQN), where a
	customer requires exactly one resource from the resource pool for
	service. If there is a resource available, the customer is immediately
	served and the resource enters an inner network. If there is no resource
	available, the new customer has to wait in an external queue until
	one becomes available (``backordering''). When a resource exits
	the inner network, it is returned to the resource pool and waits for
	another customer.
	
	In this paper, we present a new solution approach. To approximate
	the inner network with the resource pool  of the SOQN, we consider
	a modification, where newly arriving customers will decide not to
	join the external queue and are lost if the resource pool is empty
	(``{\LostCustomers}''). We prove that we can adjust the arrival
	rate of the modified system so that the throughputs in each node are
	pairwise identical to those in the original network. We also prove
	that the probabilities that the nodes with constant service rates
	are idling are pairwise identical too. Moreover, we provide a closed-form
	expression for these throughputs and probabilities of idle nodes.
	
	To approximate the external queue of the SOQN with backordering, we
	construct a reduced SOQN with backordering, where the inner network
	consists only of one node, by using Norton's theorem and results from
	the lost-customers modification. In a final step, we use the closed-form
	solution of this reduced SOQN, to estimate the performance of the
	original SOQN.
	
	We apply our results to robotic mobile fulfilment systems (RMFSs).
	These are a new type of warehousing system, which has received attention
	recently, due to increasing growth in the e-commerce sector. Instead
	of sending pickers to the storage area to search for the ordered items
	and pick them, robots carry shelves with ordered items from the storage
	area to picking stations. We model the RMFS as an SOQN, analyse its
	stability and determine the minimal number of robots for such systems
	using the results from the first part.
	
	\begingroup
	\renewcommand\thefootnote{}%
	\footnote{
		ORCID IDs and email addresses:\\
		Sonja Otten \includegraphics[width=0.8em]{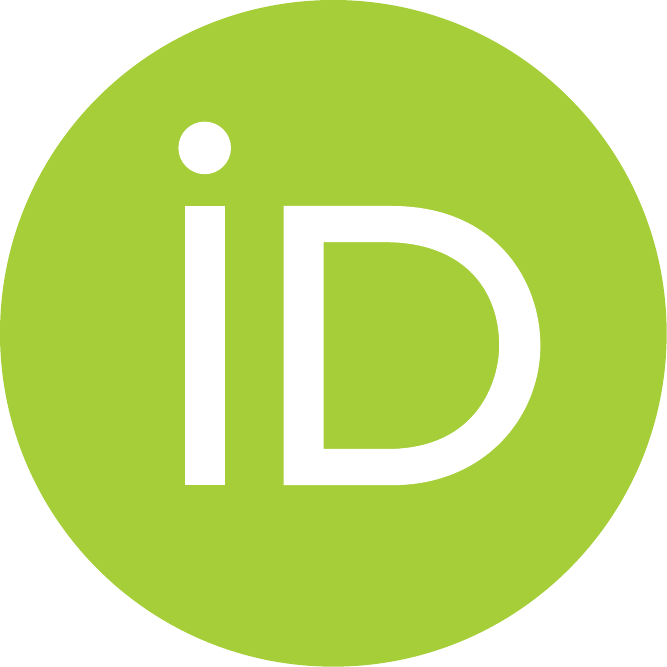}\hspace{0.4em}\url{https://orcid.org/0000-0002-3124-832X},
		sonja.otten@leuphana.de,\\
		Ruslan Krenzler \includegraphics[width=0.8em]{image/orcid_icon}\hspace{0.4em}\url{https://orcid.org/0000-0002-6637-1168},
		ruslan.krenzler@leuphana.de,\\
		Lin Xie \includegraphics[width=0.8em]{image/orcid_icon}\hspace{0.4em}\url{https://orcid.org/0000-0002-3168-4922},
		xie@leuphana.de, \\
		Hans Daduna \includegraphics[width=0.8em]{image/orcid_icon}\hspace{0.4em}\url{https://orcid.org/0000-0001-6570-3012},
		daduna@math.uni-hamburg.de, \\
		Karsten Kruse \includegraphics[width=0.8em]{image/orcid_icon}\hspace{0.4em}\url{https://orcid.org/0000-0003-1864-4915},
		karsten.kruse@tuhh.de
	}%

	\addtocounter{footnote}{-1}%
	\endgroup
	
\end{abstract}
\emph{MSC 2010 Subject Classification:} 60K25, 90B22, 90C59, 90B05

\noindent \emph{Keywords:} semi-open queueing network, backordering,
{\LostCustomers}, lost sales, product form approximation, robotic
mobile fulfilment system, warehousing

\selectlanguage{english}%
\global\long\def\phantomeq{\mathrel{\phantom{=}}}

\global\long\def\evect{\mathbf{e}}

\global\long\def\nodeSetNull{\overline{J}_{0}}
\global\long\def\nodes{J}
\global\long\def\nodeSet{\overline{J}}

\global\long\def\robotnumb{N}

\global\long\def\poolSize{N}

\global\long\def\routingMatrix{\mathcal{R}}
\global\long\def\routingProb{r}

\global\long\def\nodeProc{Y}
\global\long\def\nodeProcRV{\widehat{Y}}
\global\long\def\nodeProcALL{\mathbf{Y}}
\global\long\def\nodeQueueProcALL{\mathbf{Y}}

\global\long\def\nodeQueue{n}
\global\long\def\nodeServiceRate{\nu}
\global\long\def\nodeQueueALL{\mathbf{n}}

\global\long\def\externalQueueProc{X_{\text{ex}}}
\global\long\def\externalQueueProcRV{\widehat{X}_{\text{ex}}}
\global\long\def\externalQueue{n_{\text{ex}}}

\global\long\def\externalLength{L_{\text{ex}}}
\global\long\def\externalWaitingTime{W_{\text{ex}}}
\global\long\def\innerWaitingTime{W_{\text{in}}}

\global\long\def\piLS{\pi_{\text{LC}}}
\global\long\def\thetaLS{\theta_{\text{LC}}}
\global\long\def\normLS{C_{\text{LC}}}
\global\long\def\normB{C_{\text{}}^{\text{stb}}}
\global\long\def\normBstrich{C_{\text{BO}}^{\text{stb}'}}

\global\long\def\specialNormB{C_{\text{BO}}}

\global\long\def\piB{\pi_{\text{BO}}}
\global\long\def\thetaB{\theta_{\text{BO}}}

\global\long\def\arrival{\lambda_{\text{BO}}}
\global\long\def\effarrival{\lambda_{\text{eff}}}
\global\long\def\arrivalLS{\lambda_{\text{LC}}}
\global\long\def\maxArrival{\lambda_{\text{BO,max}}}
\global\long\def\customerOrdersArrival{\lambda_{\text{CO}}}

\global\long\def\procbackordering{X_{\text{ex}}}
\global\long\def\backordering{n_{\text{ex}}}

\global\long\def\procfreerobi{Y_{\text{idle robots}}}
\global\long\def\freerobi{k}

\global\long\def\procToPod{Y_{\text{to pod}}}
\global\long\def\toPod{m_{\text{1}}}
\global\long\def\transToPod{\nu_{1}}
\global\long\def\phiToPod{\phi_{1}}

\global\long\def\procToPick{Y_{\text{to picker}}}
\global\long\def\toPick{m_{2}}
\global\long\def\transToPicker{\nu_{\text{2}}}
\global\long\def\phiToPick{\phi_{2}}

\global\long\def\procPickerQueue{Y_{\text{picker}}}
\global\long\def\pickerQueue{n}
\global\long\def\picking{\mu}

\global\long\def\procToStorage{Y_{\text{to storage}}}
\global\long\def\toStorage{m_{3}}
\global\long\def\transToStorage{\nu_{3}}
\global\long\def\phiToStorage{\phi_{3}}

\global\long\def\etaRatio{\eta}
\global\long\def\etaRatioVect{\mathbf{\boldsymbol{\eta}}}
\global\long\def\phaseSpace{\widetilde{E}_{\text{+}}}

\global\long\def\subRate{\varphi}
\global\long\def\piBsub{\widetilde{\pi}_{\text{BO}}}

\global\long\def\etaLS{\widetilde{\eta}}
\global\long\def\lostStateSpace{E_{LC}}
\global\long\def\resourceProb{\pi_{\text{LC,0}}}

\global\long\def\procfreerobi{Y_{0}}
\global\long\def\freerobi{n_{0}}
\global\long\def\freerobiState{0}

\global\long\def\procToPod{Y_{\text{sp}}}
\global\long\def\toPod{n_{\text{sp}}}
\global\long\def\toPodState{\text{sp}}
\global\long\def\transToPod{\mu_{\text{sp}}}
\global\long\def\phiToPod{\phi_{\text{sp}}}

\global\long\def\procToPickOne{Y_{\text{pp}_{1}}}
\global\long\def\toPickOneState{\text{pp}_{1}}
\global\long\def\toPickOne{n_{\text{pp}_{1}}}
\global\long\def\transToPickerOne{\mu_{\text{pp}_{1}}}
\global\long\def\phiToPickOne{\phi_{\text{pp}_{1}}}
\global\long\def\transprobPickOne{q_{\text{pp}_{1}}}

\global\long\def\procToPickTwo{Y_{\text{pp}_{2}}}
\global\long\def\toPickTwoState{\text{pp}_{2}}
\global\long\def\toPickTwo{n_{\text{pp}_{2}}}
\global\long\def\transToPickerTwo{\mu_{\text{pp}_{2}}}
\global\long\def\phiToPickTwo{\phi_{\text{pp}_{2}}}
\global\long\def\transprobPickTwo{q_{\text{pp}_{2}}}

\global\long\def\procPickerQueueOne{Y_{\text{p}_{1}}}
\global\long\def\pickerQueueOne{n_{\text{p}_{1}}}
\global\long\def\pickerQueueOneState{\text{p}_{1}}
\global\long\def\pickingOne{\nu_{\text{p}_{1}}}
\global\long\def\transprobReplOne{q_{\text{p}_{1}\text{r}}}
\global\long\def\transprobpickReplOne{q_{\text{p}_{1}\text{s}}}

\global\long\def\procPickerQueueTwo{Y_{\text{p}_{2}}}
\global\long\def\pickerQueueTwoState{\text{p}_{2}}
\global\long\def\pickerQueueTwo{n_{\text{p}_{2}}}
\global\long\def\pickingTwo{\nu_{\text{p}_{2}}}
\global\long\def\transprobReplTwo{q_{\text{p}_{2}\text{r}}}
\global\long\def\transprobpickReplTwo{q_{\text{p}_{2}\text{s}}}

\global\long\def\procToStorageOne{Y_{\text{p}_{1}\text{s}}}
\global\long\def\toStorageOneState{\text{p}_{1}\text{s}}
\global\long\def\toStorageOne{n_{\text{p}_{1}\text{s}}}
\global\long\def\transToStorageOne{\mu_{\text{p}_{1}\text{s}}}
\global\long\def\phiToStorageOne{\phi_{\text{p}_{1}\text{s}}}

\global\long\def\procToStorageTwo{Y_{\text{p}_{2}\text{s}}}
\global\long\def\toStorageTwoState{\text{p}_{2}\text{s}}
\global\long\def\toStorageTwo{n_{\text{p}_{2}\text{s}}}
\global\long\def\transToStorageTwo{\mu_{\text{p}_{2}\text{s}}}
\global\long\def\phiToStorageTwo{\phi_{\text{p}_{2}\text{s}}}

\global\long\def\procToReplOne{Y_{\text{p}_{1}\text{r}}}
\global\long\def\toReplOneState{\text{p}_{1}\text{r}}
\global\long\def\toReplOne{n_{\text{p}_{1}\text{r}}}
\global\long\def\transToReplOne{\mu_{\text{p}_{1}\text{r}}}
\global\long\def\phiToReplOne{\phi_{\text{p}_{1}\text{r}}}

\global\long\def\procToReplTwo{Y_{\text{p}_{2}\text{r}}}
\global\long\def\toReplTwoState{\text{p}_{2}\text{r}}
\global\long\def\toReplTwo{n_{\text{p}_{2}\text{r}}}
\global\long\def\transToReplTwo{\mu_{\text{p}_{2}\text{r}}}
\global\long\def\phiToReplTwo{\phi_{\text{p}_{2}\text{r}}}

\global\long\def\procRepl{Y_{\text{r}}}
\global\long\def\replQueueState{\text{r}}
\global\long\def\replQueue{n_{\text{r}}}
\global\long\def\repl{\nu_{r}}

\global\long\def\procReplToStorage{Y_{\text{rs}}}
\global\long\def\toReplToStorageState{\text{rs}}
\global\long\def\toReplToStorage{n_{\text{rs}}}
\global\long\def\transReplToStorage{\mu_{\text{rs}}}
\global\long\def\phiToStorage{\phi_{\text{rs}}}

\global\long\def\freerobi{k_{\text{idle robots}}}

\global\long\def\procTransPick{Y_{\text{sub}}}
\global\long\def\transPickQueue{n}
\global\long\def\phiTransPick{\varphi}

\global\long\def\stabilityNetworkTh#1{TH_{#1}^{\text{stb}}}
\global\long\def\abrevC#1{b(#1)}

\global\long\def\podToOrderRatio{\sigma_{\text{pod/order}}}

\global\long\def\matchingDelay{W_{\text{alg}}}
\global\long\def\assembledTime{W_{\text{assembled}}}

\global\long\def\TOtask{TO_{\text{task}}}
\global\long\def\ThBO#1{TH_{\text{BO},#1}}
\global\long\def\ThLC#1{TH_{\text{LC},#1}}
\global\long\def\meanVisits#1{V_{#1}}
\selectlanguage{british}%

\tableofcontents{}

\section{Introduction}

Queueing networks can be broadly classified into three categories,
see e.g.~\citet[p. 21ff.]{yao2001fundamentals}, \citet{roy.2016},
\citet{azadeh2017robot}: open queueing network (OQN), closed queueing
network (CQN) and semi-open queueing network (SOQN).

In an OQN, customers arrive from an external source, request service
at several nodes and then leave the system.

In contrast, in a CQN, there are no external arrivals and departures.
The number of customers, which are cycling in the system, is fixed.

An SOQN has characteristics of both OQNs and CQNs, see e.g.~\citet{Jia.Heragu.2009}.
It is similar to an OQN in the sense that customers arrive from an
external source and leave the system after service. It is similar
to a CQN in the sense that there is an overall capacity constraint
for the inner network. A customer needs a resource from an associated
resource pool for service. If there is a resource available, the customer
is immediately served and the resource enters an inner network. If
there is no resource available, the new customer has to wait in an
external queue until one becomes available -- we call this waiting
regime ``backordering''. When a resource exits the inner network,
it returns to the resource pool and waits for the next customer. 

There are several application areas of SOQNs. For example, they are
adopted for performance analysis of manufacturing systems and service
systems (logistics, communication, warehousing and health care), see
\citet{roy.2016}.

\subsection[Contribution I: New solution approach to SOQN-BO]{Contribution I: New solution approach to SOQN with backordering}

In this paper we focus on semi-open queueing networks (SOQNs). The
literature on SOQNs is overwhelming, so we point only to the most
relevant sources for our present investigation. A detailed overview
about SOQNs and their solution methods is presented in \citet{Jia.Heragu.2009},
\citet{EKREN201478} and \citet{roy.2016}. Roy also compares the
numerical accuracy of the solution methods. A recent article, which
is not included in these reviews, is provided by \citet{Kim20181}.

The most common solution approaches are the matrix-geometric method,
aggregation method, network decomposition approach, parametric decomposition
method and performance measure bounds. 

For SOQNs with backordering and an inner network, which consists of
more than one node, closed-form expressions for the steady-state distributions
are not available. In this paper we present a new solution approach,
which is depicted in \prettyref{fig:overview-models}.

To approximate the resource network ($=$ inner network and resource
pool) of the SOQN, we consider a modification, where new arriving
customers will be lost if the resource pool is empty (``{\LostCustomers}'').
For such a modification, closed-form expressions for the steady-state
distribution in product form are available. We prove that we can adjust
the arrival rate in the modified network so that the throughputs in
each node are pairwise identical to those in the original network.
We also prove that the probabilities that the nodes with constant
service rates are idling are also pairwise identical. Moreover, we
provide a closed-form expression both for the throughputs and for
these special idle probabilities.

To approximate the external queue of the SOQN, we use an indirect
two-step approach. In the first step, we reduce the complexity of
the modified SOQN by applying Norton's theorem. In the next step,
we remove the lost-customer property of the reduced SOQN with lost
customers to get the backordering property again.

The idea to approximate backordering systems by lost-customer systems
follows \citet[Section 2.1.4, p. 34ff.]{krenzler:16}. This approximation
method is interesting because in the literature there are already
several results for queueing networks with lost customers, e.g.~lost
sales models in \citet{otten:18}, \citet{schwarz;sauer;daduna;kulik;szekli:06}
and \citet{krishnamoorthy:2011}. For an overview of the literature
on systems with lost sales, we refer to \citet{bijvank;vis:11}.

\subsection{Contribution II: Application to robotic mobile fulfilment systems}

We focus on robotic mobile fulfilment systems (RMFSs). RMFSs are a
new type of warehousing system which has received attention recently,
due to increasing growth in the e-commerce sector. Instead of sending
pickers to the storage area to search for the ordered items and pick
them, robots carry shelves -- here called \emph{pods} -- with ordered
items from the storage area to picking stations. At every picking
station there is a person -- the picker -- who takes items from
the pods and packs them into boxes according to customers' orders.
When the picker does not need the pod any more, the robot either transports
the pod directly back to the storage area or first makes a stopover
at a replenishment station. Such a fulfilment system poses many decision
problems. An overview is presented in \citet[Section 4]{RawSimDecisionRules}.
They can be classified at strategic, tactical and operational levels.
The literature on RMFS is, similar to that on SOQNs, already overwhelming,
so we only point to some references closely related to our investigations.

RMFSs are modelled as SOQNs in several articles. For an overview of
the literature, we refer to  \citet[Section 7.2 and Table 4]{azadeh2017.2}
and \citet[Section 6]{azadeh2017robot}. They classify articles according
to the decision problem of interest and the relevant methodology.
Furthermore, a recent overview is presented in \citet{dynamicPolicies}.
In our paper, we focus on decisions on the optimal number of robots.
Most of these articles analyse different decision problems from the
ones we look at in this paper. \citet{Yuan.2017} calculate the optimal
number of robots. They compare two protocols: pooled and dedicated
robots. Their investigations are based on OQNs, rather than SOQNs
and they do not consider a replenishment station in their model. \citet{zou2018}
determine the number of robots for different battery recovery strategies.
Their investigations are based on a nested SOQN.

\subsection{Structure of the paper}

In \prettyref{sec:A-general-semi-open}, we describe a general SOQN
with backordering and analyse the stability. In \prettyref{sec:throughputs-and-idle-times-LC},
we calculate the throughputs and special idle probabilities in steady
state. After that, we introduce our new approximation method for an
SOQN with backordering. Our approach is depicted in \prettyref{fig:overview-models}.
In \prettyref{sec:Approximation}, we present an approximation of
the resource network. For that, in \prettyref{sec:Lost-network},
we analyse a modification, where newly arriving customers are lost,
if the resource pool is empty. Then, in \prettyref{sec:Adjustment},
we adjust this system. In \prettyref{sect:TH_BO}, we calculate the
throughputs and special idle probabilities in steady state. In \prettyref{sec:Approximation-of-the-external-queue},
we present an approximation for the external queue. For that, in \prettyref{sec:small-modified},
we reduce the complexity of the modified SOQN with lost customers
by applying Norton's theorem. Then, in \prettyref{sub:small-with-backordering},
we remove the lost-customer property to get the backordering property
again. In \prettyref{sec:application}, we present an application
of our results. We model the RMFS as an SOQN with backordering and
apply our results to the problem: ``What is the optimal number of
robots?'' We formulate an algorithm to calculate the minimum number
of robots and present numerical examples. Finally, \prettyref{sec:Conclusion}
concludes the paper.

\begin{figure}[h]
\includegraphics[width=1\textwidth]{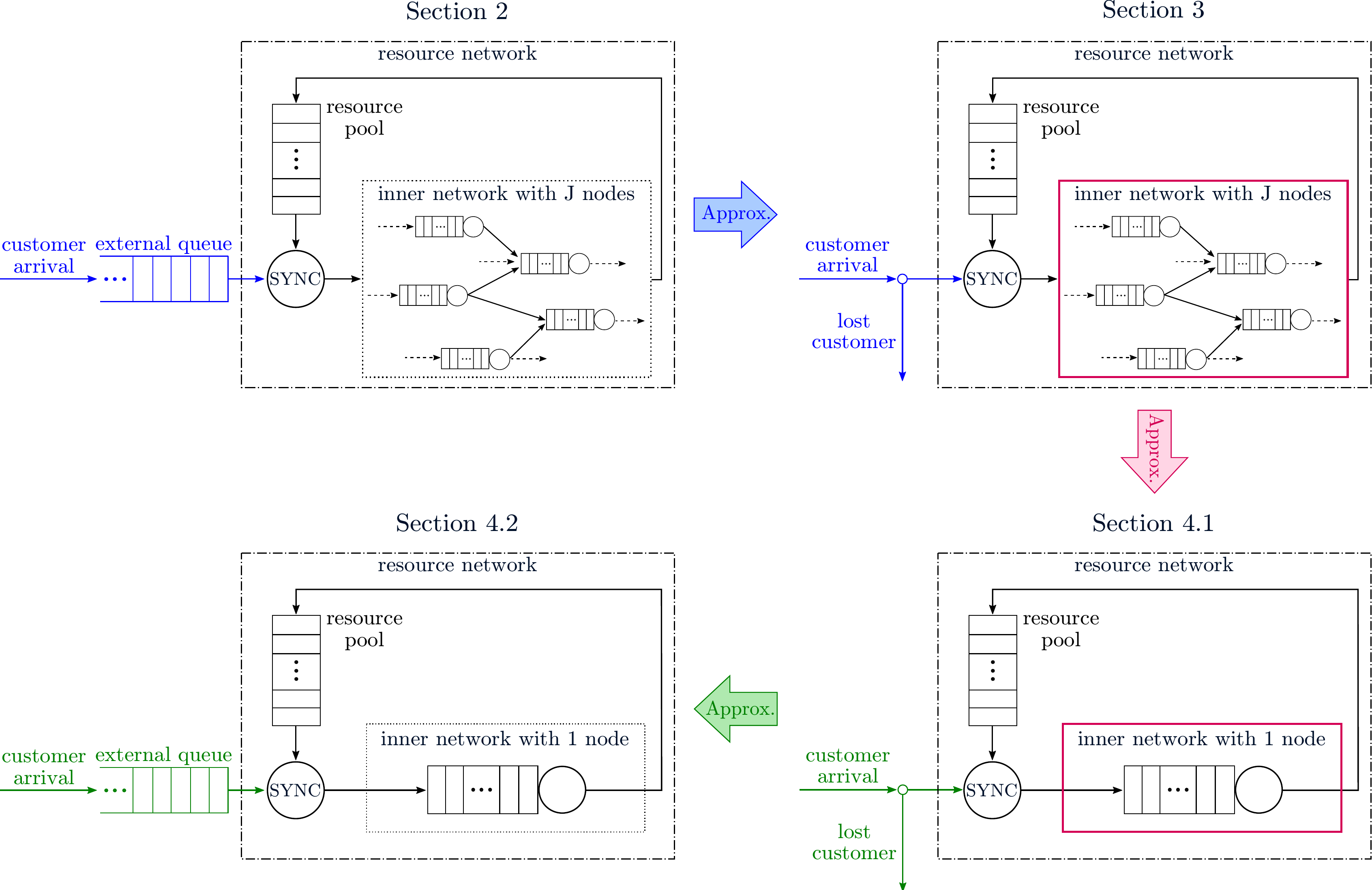}\caption{\label{fig:overview-models}Overview of the models. We use the same
colour for parts, which we change in a single approximation step.}

\end{figure}

\subsection{Notation and preliminaries}

$\mathbb{N}:=\left\{ 1,2,3,\ldots\right\} $, $\mathbb{N}_{0}:=\{0\}\cup\mathbb{N}$,
$\mathbb{R}_{0}^{+}:=[0,\infty)$ and $\mathbb{R}^{+}:=(0,\infty)$.

The vector $\mathbf{0}$ is a row vector of appropriate size with
all entries equal to $0$. The vector $\mathbf{e}$ is a column vector
of appropriate size with all entries equal to $1$. The vector $\mathbf{e}_{i}=(0,\ldots,0,\underbrace{1}_{\mathclap{i\text{-th element}}},0,\ldots,0)$
is a vector of appropriate size.

$1_{\left\{ expression\right\} }$ is the indicator function which
is $1$ if $expression$ is true and $0$ otherwise.

Empty sums are 0, and empty products are 1.

We call a ``generator'' a matrix $M\in\mathbb{R}^{K\times K}$ with
countable index set $K$ if all its off-diagonal elements are non-negative
and all its row sums are equal to zero.

Throughout this paper it is assumed that all random variables are
defined on a common probability space $(\Omega,{\cal F},P)$. Furthermore,
by ``Markov process'' we mean a time-homogeneous continuous-time strong
Markov process with discrete state space -- also known as a Markov
jump process. All Markov processes are assumed to be regular and have
cdlg paths, i.e.\ each path of a process is right-continuous and
has left limits everywhere. We call a Markov process regular if it
is non-explosive (i.e.\ the sequence of jump times of the process
diverges almost surely) and its transition intensity matrix is a generator
matrix.

\section{SOQN with backordering\label{sec:A-general-semi-open}}

\subsection{Description of the model\label{sec:RMFS-general-model}}

An SOQN with backordering is shown in \prettyref{fig:SOQN}. Henceforth,
we call it an \emph{SOQN-BO}. It represents a queueing network with
an additional resource pool. 

\begin{figure}[h]
\begin{centering}
\includegraphics[width=0.85\textwidth]{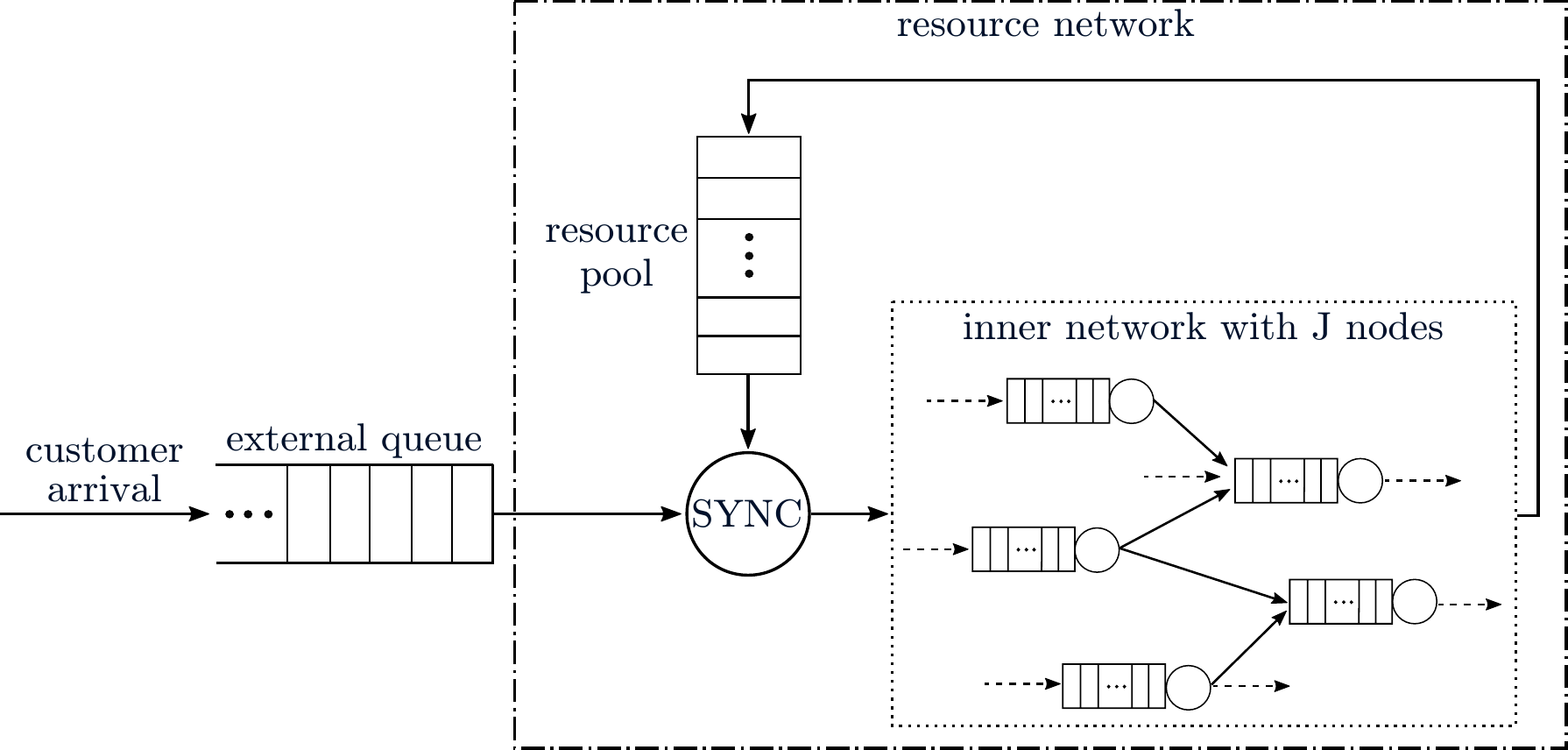}
\par\end{centering}
\centering{}\caption{\label{fig:SOQN}An SOQN with backordering.}
\end{figure}

Customers arrive one by one according to a Poisson process with rate
$\arrival>0$. Every customer requires exactly one resource from resource
pool for service. If there is a resource available, the customer is
immediately served and the resource enters an inner network. If there
is no resource available, the new customer has to wait in an external
queue under the first-come, first-served (FCFS) regime until a resource
becomes available (``backordering'').

When the resource exits the inner network, it returns to the resource
pool, which is henceforth referred to as node $0$, and waits for
the next customer. Whenever the external queue is not empty and a
resource item is returned to the resource pool, this item is instantaneously
synchronised with the customer at the head of the line. The resources
therefore move in a closed network. We will call this network \emph{resource
network}. The maximal number of resources in the resource pool is
$\poolSize$.

The inner network consists of $\nodes\geq1$ numbered service stations
(nodes), denoted by $\nodeSet:=\left\{ 1,\ldots,\nodes\right\} $.
Each station $j$ consists of a single server with infinite waiting
room under the FCFS regime or the processor sharing regime. Customers
in the network are indistinguishable. The service times are exponentially
distributed random variables with mean $1$. If there are $n_{j}>0$
customers present at node $j$, service at node $j$ is provided with
intensity $\nodeServiceRate_{j}(\nodeQueue_{j})>0$. All service and
inter-arrival times constitute an independent family of random variables.

Movements of resources in the inner network are governed by a Markovian
routing mechanism: After synchronisation with a customer, a resource
visits node $j$ with probability $\routingProb(0,j)\geq0$. A resource,
when leaving node $i$, selects with probability $\routingProb(i,j)\geq0$
to visit node $j$ next, and then enters node $j$ immediately. It
starts service if it finds the server idle, otherwise it joins the
tail of the queue at node $j$. This resource can also leave the inner
network with probability $\routingProb(i,0)\geq0$. It holds $\sum_{j=0}^{\nodes}\routingProb(i,j)=1$
with $\routingProb(0,0):=0$ for all $i\in\nodeSetNull:=\left\{ 0,1,\ldots,\nodes\right\} $.
Given the departure node $i$, the resource's routing decision is
made independently of the network's history. We assume that the routing
matrix $\routingMatrix:=\left(\routingProb(i,j):i,j\in\nodeSetNull\right)$
is irreducible.

To obtain a Markovian process description, we denote by $\externalQueueProc(t)$
the number of customers in the external queue at time $t\geq0$, by
$\nodeProc_{0}(t)$ the number of resources in the resource pool at
time $t\geq0$ and by $\nodeProc_{j}(t)$, $j\in\nodeSet$, the number
of resources present at node $j$ in the inner network at time $t\geq0$,
either waiting or in service. We call this $\nodeProc_{j}(t)$ queue
length at node $j\in\nodeSet$ at time $t\geq0$. Then $\nodeProcALL(t):=\left(\nodeProc_{j}(t):j\in\nodeSetNull\right)$
is the queue length vector of the resource network at time $t\geq0$.
We define the joint queue length process of the semi-open network
with \textbf{b}ack\textbf{o}rdering by
\[
Z_{\text{BO}}:=\left(\left(\externalQueueProc(t),\nodeProcALL(t)\right):t\geq0\right).
\]
Then, due to the independence and memorylessness assumptions,\label{independence-memorylessness}
$Z_{\text{BO}}$ is a homogeneous Markov process with state space
\begin{align*}
E & :=\big\{\left(0,\nodeQueue_{0},\nodeQueue_{1},\ldots,\nodeQueue_{J}\right):\nodeQueue_{j}\in\left\{ 0,\ldots,\poolSize\right\} \:\forall j\in\nodeSetNull,\sum_{j\in\nodeSetNull}\nodeQueue_{j}=\poolSize\big\}\\
 & \phantomeq\cup\big\{\left(\externalQueue,0,\nodeQueue_{1},\ldots,\nodeQueue_{J}\right):\externalQueue\in\mathbb{N},\:\nodeQueue_{j}\in\left\{ 0,\ldots,\poolSize\right\} \:\forall j\in\nodeSet,\sum_{j\in\nodeSet}\nodeQueue_{j}=\poolSize\big\}.
\end{align*}
$Z_{\text{BO}}$ is irreducible on $E$.

\subsection{Stability\label{sec:RMFS-general-stat-distr}}

In this section, we analyse the stability of our system. This is important
from a practical point of view: It ensures that the system processes
customers quickly enough.

The Markov process $Z_{\text{BO}}$ has an infinitesimal generator
$\mathbf{Q}:=\left(q(z;\tilde{z}):z,\tilde{z}\in E\right)$ with the
following transition rates for $\left(\externalQueue,\nodeQueueALL\right),\ \left(0,\nodeQueueALL\right)\in E$,
where $\nodeQueueALL:=\left(\nodeQueue_{j}:j\in\nodeSetNull\right)$:
\begin{align*}
 & q\left(\left(\externalQueue,\nodeQueueALL\right);\left(\externalQueue+1,\nodeQueueALL\right)\right)=\arrival\cdot1_{\left\{ \nodeQueue_{0}=0\right\} },\ \externalQueue\geq0,\\
 & q\left(\left(0,\nodeQueueALL\right);\left(0,\nodeQueueALL-\evect_{0}+\evect_{i}\right)\right)=\arrival\cdot\routingProb(0,i)\cdot1_{\left\{ \nodeQueue_{0}>0\right\} },\ i\in\nodeSet,\\
 & q\left(\left(\externalQueue,\nodeQueueALL\right);\left(\externalQueue,\nodeQueueALL-\evect_{i}+\evect_{j}\right)\right)=\nodeServiceRate_{i}(\nodeQueue_{i})\cdot\routingProb(i,j)\cdot1_{\left\{ \nodeQueue_{i}>0\right\} },\ \externalQueue\geq0,\ i,j\in\nodeSet,\\
 & q\left(\left(0,\nodeQueueALL\right);\left(0,\nodeQueueALL-\evect_{i}+\evect_{0}\right)\right)=\nodeServiceRate_{i}(\nodeQueue_{i})\cdot\routingProb(i,0)\cdot1_{\left\{ \nodeQueue_{i}>0\right\} },\ i\in\nodeSet,\\
 & q\left(\left(\externalQueue,\nodeQueueALL\right);\left(\externalQueue-1,\nodeQueueALL-\evect_{i}+\evect_{j}\right)\right)=\nodeServiceRate_{i}(\nodeQueue_{i})\cdot\routingProb(i,0)\cdot\routingProb(0,j)\cdot1_{\left\{ \externalQueue>0\right\} }\cdot1_{\left\{ \nodeQueue_{i}>0\right\} },\ \externalQueue>0,\ i\in\nodeSet.
\end{align*}
Furthermore, $q(z;\tilde{z})=0$ for any other pair $z\neq\tilde{z}$,
and
\[
q\left(z;z\right)=-\sum_{\substack{\tilde{z}\in E,\\
\tilde{z}\neq z
}
}q\left(z;\tilde{z}\right)\qquad\forall z\in E.
\]

The Markov process $Z_{\text{BO}}$ is a level-independent quasi-birth-and-death
process in the sense of \citet[Def. 1.3.1, p. 12]{latouche.ramaswami:99}.
The level is the length $\externalQueue$ of the external queue. The
phase is the state of the resource network. For a level equal to zero,
the phase space is
\[
\widetilde{E}_{0}:=\big\{\left(\nodeQueue_{0},\nodeQueue_{1},\ldots,\nodeQueue_{J}\right):\ \nodeQueue_{j}\in\left\{ 0,\ldots,\poolSize\right\} \:\forall j\in\nodeSetNull,\sum_{j\in\nodeSetNull}\nodeQueue_{j}=\poolSize\big\}.
\]
When the level is positive, there is at least one customer in the
external queue. This is only possible if the resource pool is empty.
Hence, for all positive levels the phase space is
\[
\phaseSpace:=\big\{\left(0,\nodeQueue_{1},\ldots,\nodeQueue_{J}\right):\:\nodeQueue_{j}\in\left\{ 0,\ldots,\poolSize\right\} \:\forall j\in\nodeSet,\:\sum_{j\in\nodeSet}\nodeQueue_{j}=\poolSize\big\}.
\]

Arranging the states by level, the corresponding infinitesimal generator
$\mathbf{Q}$ of $Z_{\text{BO}}$ can be written as
\[
\mathbf{Q}=\left(\begin{array}{ccccc}
\mathbf{B}_{0} & \mathbf{B}_{1}\\
\mathbf{B}_{2} & \mathbf{A}_{0} & \mathbf{A}_{1}\\
 & \mathbf{A}_{-1} & \mathbf{A}_{0} & \mathbf{A}_{1}\\
 &  &  & \ddots\ddots & \ddots
\end{array}\right),
\]
where $\mathbf{B}_{0}\in\mathbb{R}^{\widetilde{E}_{0}\times\widetilde{E}_{0}}$,
$\mathbf{B}_{1}\in\mathbb{R}^{\widetilde{E}_{0}\times\phaseSpace}$,
$\mathbf{B}_{2}\in\mathbb{R}^{\phaseSpace\times\widetilde{E}_{0}}$
and $\mathbf{A}_{-1}$, $\mathbf{A}_{0}$, $\mathbf{A}_{1}\in\mathbb{R}^{\phaseSpace\times\phaseSpace}$
are matrices.\\
$\mathbf{A}_{1}$ is a non-negative matrix with the following positive
elements:
\[
a_{1}\left(\left(0,\nodeQueue_{1},\ldots,\nodeQueue_{J}\right);\left(0,\nodeQueue_{1},\ldots,\nodeQueue_{J}\right)\right)=\arrival.
\]
$\mathbf{A}_{-1}$ is a non-negative matrix with at most the following
elements non-negative:
\begin{align}
 & a_{-1}\left(\left(0,\nodeQueue_{1},\ldots,\nodeQueue_{J}\right);\left(0,\nodeQueue_{1},\ldots,\nodeQueue_{i}-1,\ldots,\nodeQueue_{j}+1,\ldots,\nodeQueue_{J}\right)\right)\nonumber \\
 & =\nodeServiceRate_{i}(\nodeQueue_{i})\cdot\routingProb(i,0)\cdot\routingProb(0,j)\cdot1_{\left\{ \nodeQueue_{i}>0\right\} },\quad i,j\in\nodeSet.\label{eq:alpha-minus1}
\end{align}
$\mathbf{A}_{0}$ has non-negative off-diagonal elements and strictly
negative diagonals. The off-diagonal elements are
\begin{align*}
 & a_{0}\left(\left(0,\nodeQueue_{1},\ldots,\nodeQueue_{J}\right);\left(0,\nodeQueue_{1},\ldots,\nodeQueue_{i}-1,\ldots,\nodeQueue_{j}+1,\ldots,\nodeQueue_{J}\right)\right)\\
 & =\nodeServiceRate_{i}(\nodeQueue_{i})\cdot\routingProb(i,j)\cdot1_{\left\{ \nodeQueue_{i}>0\right\} },\quad i,j\in\nodeSet.
\end{align*}

Let $\boldsymbol{\pi}_{\text{BO}}:=\left(\piB\left(\externalQueue,\nodeQueueALL\right):\left(\externalQueue,\nodeQueueALL\right)\in E\right)$
be the steady-state distribution of the stochastic Markov process
$Z_{\text{BO}}$. The global balance equations $\boldsymbol{\pi}_{\text{BO}}\cdot\mathbf{Q=0}$
are given as follows.

\vspace{1\baselineskip}
\noindent For $\externalQueue=0$
\begin{align*}
 & \piB\left(0,\nodeQueueALL\right)\cdot\Big(\arrival+\sum_{i\in\nodeSet}\sum_{j\in\nodeSet\setminus\{i\}}\nodeServiceRate_{i}(\nodeQueue_{i})\cdot\routingProb(i,j)\cdot1_{\left\{ \nodeQueue_{i}>0\right\} }+\sum_{i\in\nodeSet}\nodeServiceRate_{i}(\nodeQueue_{i})\cdot\routingProb(i,0)\cdot1_{\left\{ \nodeQueue_{i}>0\right\} }\Big)\\
 & =\sum_{i\in\nodeSet}\piB\left(0,\nodeQueueALL+\evect_{0}-\evect_{i}\right)\cdot\arrival\cdot\routingProb(0,i)\cdot1_{\left\{ \nodeQueue_{i}>0\right\} }\\
 & \phantomeq+\sum_{i\in\nodeSet}\sum_{j\in\nodeSet\setminus\{i\}}\piB\left(0,\nodeQueueALL+\evect_{i}-\evect_{j}\right)\cdot\nodeServiceRate_{i}(\nodeQueue_{i}+1)\cdot\routingProb(i,j)\cdot1_{\left\{ \nodeQueue_{j}>0\right\} }\\
 & \phantomeq+\sum_{i\in\nodeSet}\piB\left(0,\nodeQueueALL+\evect_{i}-\evect_{0}\right)\cdot\nodeServiceRate_{i}(\nodeQueue_{i}+1)\cdot\routingProb(i,0)\cdot1_{\left\{ \nodeQueue_{0}>0\right\} }\\
 & \phantomeq+\sum_{i\in\nodeSet}\sum_{j\in\nodeSet\setminus\{i\}}\piB\left(1,\nodeQueueALL+\evect_{i}-\evect_{j}\right)\cdot\nodeServiceRate_{i}(\nodeQueue_{i}+1)\cdot\routingProb(i,0)\cdot\routingProb(0,j)\cdot1_{\left\{ \nodeQueue_{j}>0\right\} }\cdot1_{\left\{ \nodeQueue_{0}=0\right\} }\\
 & \phantomeq+\sum_{i\in\nodeSet}\piB\left(1,\nodeQueueALL\right)\cdot\nodeServiceRate_{i}(\nodeQueue_{i})\cdot\routingProb(i,0)\cdot\routingProb(0,i)\cdot1_{\left\{ \nodeQueue_{i}>0\right\} }\cdot1_{\left\{ \nodeQueue_{0}=0\right\} }.
\end{align*}
For $\externalQueue>0$ which implies $\nodeQueue_{0}=0$
\begin{align*}
 & \piB\left(\externalQueue,\nodeQueueALL\right)\cdot\Big(\arrival+\sum_{i\in\nodeSet}\sum_{j\in\nodeSet\setminus\{i\}}\nodeServiceRate_{i}(\nodeQueue_{i})\cdot\routingProb(i,j)\cdot1_{\left\{ \nodeQueue_{i}>0\right\} }+\sum_{i\in\nodeSet}\nodeServiceRate_{i}(\nodeQueue_{i})\cdot\routingProb(i,0)\cdot1_{\left\{ \nodeQueue_{i}>0\right\} }\Big)\\
 & =\piB\left(\externalQueue-1,\nodeQueueALL\right)\cdot\arrival\\
 & \phantomeq+\sum_{j\in\nodeSet}\sum_{i\in\nodeSet\setminus\{i\}}\piB\left(\externalQueue,\nodeQueueALL+\evect_{i}-\evect_{j}\right)\cdot\nodeServiceRate_{i}(\nodeQueue_{i}+1)\cdot\routingProb(i,j)\cdot1_{\left\{ \nodeQueue_{j}>0\right\} }\\
 & \phantomeq+\sum_{j\in\nodeSet}\sum_{i\in\nodeSet\setminus\{i\}}\piB\left(\externalQueue+1,\nodeQueueALL+\evect_{i}-\evect_{j}\right)\cdot\nodeServiceRate_{i}(\nodeQueue_{i}+1)\cdot\routingProb(i,0)\cdot\routingProb(0,j)\cdot1_{\left\{ \nodeQueue_{j}>0\right\} }\\
 & \phantomeq+\sum_{i\in\nodeSet}\piB\left(\externalQueue+1,\nodeQueueALL\right)\cdot\nodeServiceRate_{i}(\nodeQueue_{i})\cdot\routingProb(i,0)\cdot\routingProb(0,i)\cdot1_{\left\{ \nodeQueue_{i}>0\right\} }.
\end{align*}

There is no closed-form expression for the steady-state distribution
for the case $J>1$. Latouche and Ramaswami developed a logarithmic
reduction algorithm for the level-independent quasi-birth-and-death
process to compute the steady-state distribution (\citet{latouche;ramaswami:93},
\citep[Theorem 6.4.1 and Lemma 6.4.3, p. 142ff.]{latouche.ramaswami:99}).
For the case $J=1$, we calculate a closed-form expression for the
steady-state distribution, which we present in \prettyref{sec:Special-case}.

Next we determine the stability condition of the system. For that
we define the following traffic equation:
\begin{equation}
\etaRatio_{j}=\sum_{i\in\nodeSetNull}\etaRatio_{i}\cdot\routingProb(i,j),\qquad j\in\nodeSetNull.\label{eq:traffic-equation-LS}
\end{equation}
In matrix notation, the above equation can be expressed as $\etaRatioVect\cdot R=\etaRatioVect$
with $\etaRatioVect:=\left(\etaRatio_{j}\in\mathbb{R}^{+}:j\in\nodeSetNull\right)$. 

\citet{lavenberg-1978} showed that the system with backordering is
stable if the arrival rate $\arrival$ is smaller than the maximal
arrival rate $\maxArrival$ where $\maxArrival$ is the throughput
through node $0$ of a closed network in \prettyref{fig:SOQN-stability-network}.
In this network every resource which goes through node~$0$, spends
zero time at node $0$ and goes immediately to the next node according
to a branching vector $(\routingProb(0,j):j\in\nodeSet)$. \citeauthor{lavenberg-1978}
calls this network \emph{saturated} -- it is obtained when there
are infinitely many customers in the external queue.

We will call this network in \prettyref{fig:SOQN-stability-network}
\emph{stability} network. \citeauthor{lavenberg-1978} did not only
prove that the system is stable for $\arrival<\maxArrival$, he also
proved that it is unstable if $\arrival>$ $\maxArrival$, but he
did not show what will happen if $\arrival=\maxArrival$. In our stability
analysis we use matrix geometrical methods which cover all the cases
``$<$'', ``$=$'' and ``$>$''. 

In order to simplify notation we define the following constant:
\[
\normB(\nodeSet,\poolSize):=\sum_{\sum_{j\in\nodeSet}\nodeQueue_{j}=\poolSize}\prod_{j=1}^{\nodes}\left(\prod_{i=1}^{\nodeQueue_{j}}\frac{\etaRatio_{j}}{\nodeServiceRate_{j}(i)}\right).
\]

\begin{prop}
\label{prop:stability-condition}The system is stable if and only
if 
\[
\arrival<\maxArrival
\]
with
\begin{equation}
\maxArrival=\etaRatio_{0}\cdot\frac{\normB(\nodeSet,\poolSize-1)}{\normB(\nodeSet,\poolSize)}.\label{eq:max-arrival-from-norm-constants}
\end{equation}
\end{prop}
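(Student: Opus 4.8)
The plan is to use the level-independent QBD structure already displayed in the excerpt together with the matrix-geometric mean-drift criterion, which classifies all three regimes at once. Because the phase space $\phaseSpace$ is \emph{finite} (it records the distribution of $\poolSize$ resources over the $\nodes$ inner nodes), the matrix $\mathbf{A}:=\mathbf{A}_{-1}+\mathbf{A}_{0}+\mathbf{A}_{1}$ is a finite irreducible generator and has a unique stationary distribution $\boldsymbol{\alpha}$. By the criterion for QBD processes (\citet{latouche.ramaswami:99}), the irreducible process $Z_{\text{BO}}$ is positive recurrent iff the mean upward drift is strictly below the mean downward drift, $\boldsymbol{\alpha}\mathbf{A}_{1}\evect<\boldsymbol{\alpha}\mathbf{A}_{-1}\evect$, null recurrent iff equality holds, and transient iff the reverse strict inequality holds; this depends only on the repeating blocks and not on the boundary blocks $\mathbf{B}_0,\mathbf{B}_1,\mathbf{B}_2$. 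The whole proof therefore reduces to (i) identifying $\boldsymbol{\alpha}$ and (ii) computing the two drift rates.

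For (i) I would first read off from the given entries of $\mathbf{A}_0$ and $\mathbf{A}_{-1}$ (and $\mathbf{A}_1=\arrival\mathbf{I}$) that $\mathbf{A}$ is precisely the generator of the saturated network of \prettyref{fig:SOQN-stability-network}: a resource leaving node $i$ reaches node $j\in\nodeSet$ either directly (the $\mathbf{A}_0$ term $\nodeServiceRate_i(\nodeQueue_i)\routingProb(i,j)$) or instantaneously through the empty pool (the $\mathbf{A}_{-1}$ term $\nodeServiceRate_i(\nodeQueue_i)\routingProb(i,0)\routingProb(0,j)$), so the effective routing is $\widetilde{\routingProb}(i,j)=\routingProb(i,j)+\routingProb(i,0)\routingProb(0,j)$. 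A short check using $\routingProb(0,0)=0$ and $\sum_{j=0}^{J}\routingProb(i,j)=1$ shows $\widetilde{\routingProb}$ is stochastic on $\nodeSet$ and that $\mathbf{A}$ has zero row sums; irreducibility of $\routingMatrix$ lifts to irreducibility of $\widetilde{\routingProb}$ (any $\routingMatrix$-path through node $0$ becomes a single $\widetilde{\routingProb}$-edge). Hence $\mathbf{A}$ is a closed Gordon--Newell generator on $\nodeSet$ with $\poolSize$ customers, and the Gordon--Newell theorem gives the product form $\alpha(\nodeQueueALL)=\normB(\nodeSet,\poolSize)^{-1}\prod_{j=1}^{\nodes}\prod_{i=1}^{\nodeQueue_j}\etaRatio_j/\nodeServiceRate_j(i)$. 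The one substantive verification is that the visit ratios $\etaRatio_j$ of \eqref{eq:traffic-equation-LS} also solve the saturated network's traffic equation: substituting $\widetilde{\routingProb}$ and using the node-$0$ balance $\etaRatio_0=\sum_{i\in\nodeSet}\etaRatio_i\routingProb(i,0)$ collapses it back to \eqref{eq:traffic-equation-LS} for $j\in\nodeSet$, so the normalising constant is exactly $\normB(\nodeSet,\poolSize)$.

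For (ii) the upward rate is immediate from $\mathbf{A}_1=\arrival\mathbf{I}$, giving $\boldsymbol{\alpha}\mathbf{A}_{1}\evect=\arrival$. For the downward rate, the row sum of $\mathbf{A}_{-1}$ at phase $\nodeQueueALL$ collapses (via $\sum_{j\in\nodeSet}\routingProb(0,j)=1$) to $\sum_{i\in\nodeSet}\nodeServiceRate_i(\nodeQueue_i)\routingProb(i,0)1_{\{\nodeQueue_i>0\}}$, the instantaneous rate of passage through node $0$, so $\boldsymbol{\alpha}\mathbf{A}_{-1}\evect$ equals the throughput $\stabilityNetworkTh{0}$ of the saturated network. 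I would then apply the standard closed-network throughput identity $\sum_{\nodeQueueALL}\alpha(\nodeQueueALL)\,\nodeServiceRate_i(\nodeQueue_i)1_{\{\nodeQueue_i>0\}}=\etaRatio_i\,\normB(\nodeSet,\poolSize-1)/\normB(\nodeSet,\poolSize)$, obtained by telescoping the factor $\nodeServiceRate_i(\nodeQueue_i)$ against $\prod_{l=1}^{\nodeQueue_i}\etaRatio_i/\nodeServiceRate_i(l)$ to reindex from $\poolSize$ to $\poolSize-1$ resources; multiplying by $\routingProb(i,0)$, summing over $i\in\nodeSet$ and using $\etaRatio_0=\sum_{i}\etaRatio_i\routingProb(i,0)$ once more yields $\boldsymbol{\alpha}\mathbf{A}_{-1}\evect=\etaRatio_0\,\normB(\nodeSet,\poolSize-1)/\normB(\nodeSet,\poolSize)=\maxArrival$. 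The drift criterion then gives positive recurrence iff $\arrival<\maxArrival$, which is \eqref{eq:max-arrival-from-norm-constants}.

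I expect the two drift evaluations to be routine; the step carrying the real content is (i), namely recognising $\mathbf{A}$ as the saturated Gordon--Newell generator and verifying that the \emph{same} vector $\etaRatioVect$ solves both traffic equations, since this is exactly what forces the constant $\normB$ (rather than some other normaliser) to appear in $\maxArrival$. The genuinely new contribution relative to \citet{lavenberg-1978} is the boundary case $\arrival=\maxArrival$: here I would stress that the matrix-geometric criterion identifies this as null recurrence, hence instability, settling the case Lavenberg left open.
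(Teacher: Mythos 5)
Your proposal is correct and follows essentially the same route as the paper: the QBD mean-drift criterion applied to the inter-level generator $\mathbf{A}=\mathbf{A}_{-1}+\mathbf{A}_{0}+\mathbf{A}_{1}$, identification of $\mathbf{A}$ as the generator of the saturated (stability) network with the skip-over routing matrix $\routingProb'(i,j)=\routingProb(i,j)+\routingProb(i,0)\cdot\routingProb(0,j)$, the observation that the same $\etaRatioVect$ solves both traffic equations so the normaliser is $\normB(\nodeSet,\poolSize)$, and the standard closed-network throughput identity to evaluate $\boldsymbol{\alpha}\cdot\mathbf{A}_{-1}\cdot\mathbf{e}$. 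The only differences are cosmetic: you verify the traffic-equation invariance and the throughput formula directly where the paper cites \citet[Proposition 2.1]{krenzler-daduna-otten:2016} and \citet{Bolch:1998:QNM:289350}.
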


\begin{proof}
We use matrix-geometric methods to prove the stability condition.
According to \citet[Theorem 1]{latouche:10}: Given the irreducible
inter-level generator matrix $\mathbf{A}:=\mathbf{A}_{-1}+\mathbf{A}_{0}+\mathbf{A}_{1}$
of $Z_{\text{BO}}$ and the stochastic solution $\mathbf{\boldsymbol{\alpha}}:=(\alpha(\widetilde{\nodeQueueALL}):\widetilde{\nodeQueueALL}\in\phaseSpace)$
of the equation $\mathbf{\boldsymbol{\alpha}}\cdot\mathbf{A}=\mathbf{0}$,
then the process $Z_{\text{BO}}$ is stable if and only if
\begin{equation}
\mathbf{\boldsymbol{\alpha}}\cdot\mathbf{A}_{1}\cdot\mathbf{e}<\mathbf{\boldsymbol{\alpha}}\cdot\mathbf{A}_{-1}\cdot\mathbf{e}.\label{eq:stability-equation}
\end{equation}

Because $\mathbf{A}_{1}$ is a diagonal matrix with $\arrival$ on
its diagonal and $\mathbf{\boldsymbol{\alpha}}$ is a stochastic vector,
we have immediately for the left-hand side of \prettyref{eq:stability-equation}
\begin{equation}
\mathbf{\boldsymbol{\alpha}}\cdot\mathbf{A}_{1}\cdot\mathbf{e}=\arrival.\label{eq:left-hand-side-of-the-stability-equation}
\end{equation}
Therefore, in a stable system, the arrival rate $\arrival$ must be
strictly less than the right-hand side of inequation \prettyref{eq:stability-equation}.
We define
\[
\maxArrival:=\mathbf{\boldsymbol{\alpha}}\cdot\mathbf{A}_{-1}\cdot\mathbf{e}.
\]
To calculate $\maxArrival$, we first need to calculate the stochastic
vector $\mathbf{\boldsymbol{\alpha}}$. The non-negative non-diagonal
elements of the generator $\mathbf{A}$ are of the form
\begin{align*}
 & a\left(\left(0,\nodeQueue_{1},\ldots,\nodeQueue_{J}\right);\left(0,\nodeQueue_{1},\ldots,\nodeQueue_{i}-1,\ldots,\nodeQueue_{j}+1,\ldots,\nodeQueue_{J}\right)\right)\\
 & =\nodeServiceRate_{i}(\nodeQueue_{i})\cdot\Big(\routingProb(i,j)+\routingProb(i,0)\cdot\routingProb(0,j)\Big)\cdot1_{\left\{ \nodeQueue_{i}>0\right\} },
\end{align*}
for $i\neq j$. For its diagonal elements it holds
\[
a\left(z;z\right)=-\sum_{\substack{\tilde{z}\in E,\\
z\neq\tilde{z}
}
}a\left(z;\tilde{z}\right)\qquad\forall z\in E.
\]
We now solve for all $\widetilde{\nodeQueueALL}:=\left(0,\nodeQueue_{1},\ldots,\nodeQueue_{J}\right)\in\phaseSpace$
\begin{align}
 & \alpha\left(\widetilde{\nodeQueueALL}\right)\cdot\sum_{i\in\nodeSet}\nodeServiceRate_{i}(\nodeQueue_{i})\cdot\sum_{j\in\nodeSet}\Big(\routingProb(i,j)+\routingProb(i,0)\cdot\routingProb(0,j)\Big)\cdot1_{\left\{ \nodeQueue_{i}>0\right\} }\nonumber \\
 & =\sum_{j\in\nodeSet}\sum_{i\in\nodeSet}\alpha\left(\widetilde{\nodeQueueALL}+\evect_{i}-\evect_{j}\right)\cdot\nodeServiceRate_{i}(\nodeQueue_{i}+1)\cdot\Big(\routingProb(i,j)+\routingProb(i,0)\cdot\routingProb(0,j)\Big)\cdot1_{\left\{ \nodeQueue_{j}>0\right\} }.\label{eq:envirenment-generalized-gordon-newell-gbe}
\end{align}
\begin{figure}[h]
\centering{}\includegraphics[width=0.5\textwidth]{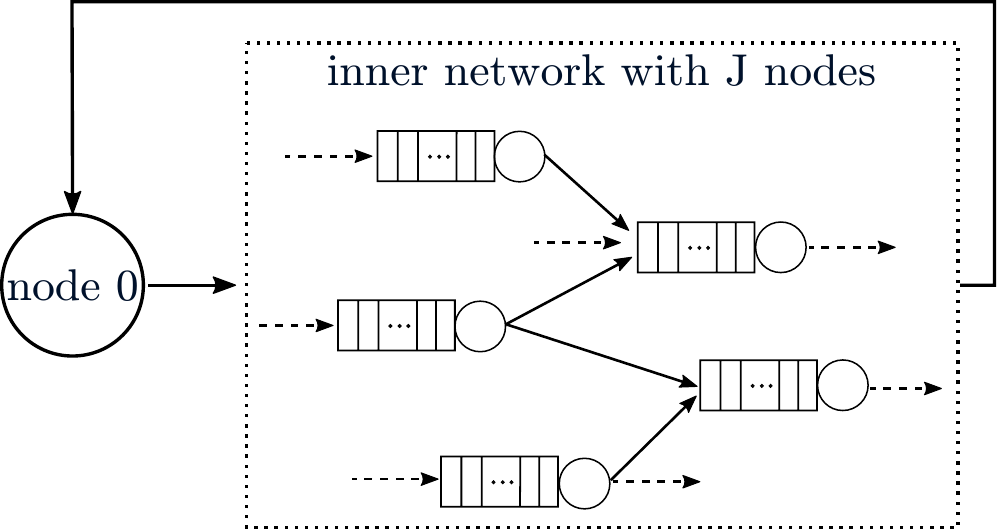}\caption{\label{fig:SOQN-stability-network}{\StabilityNetwork} -- a closed
network described by the inter-level generator matrix $\mathbf{A}$
from the proof of \prettyref{prop:stability-condition}.\protect \\
Interpretation I: a generalised Gordon-Newell network with zero service
time at node~$0$.\protect \\
Interpretation II: a classical Gordon-Newell network obtained after
rerouting at node~$0$.}
\end{figure}

Equation \eqref{eq:envirenment-generalized-gordon-newell-gbe} is
the global balance equation of a generalised Gordon-Newell network
with node set $\nodeSetNull$, $\poolSize$ customers and zero service
time at node $0$. We will call this network a \emph{\stabilityNetwork},
see \prettyref{fig:SOQN-stability-network}.

Equation \eqref{eq:envirenment-generalized-gordon-newell-gbe} also
has another important interpretation, which allows us to use standard
algorithms from performance analysis of Gordon-Newell networks. Note
that the state of node $0$ never changes, therefore we define
\begin{equation}
\alpha'(\nodeQueue_{1},\ldots,\nodeQueue_{J}):=\alpha\left(0,\nodeQueue_{1},\ldots,\nodeQueue_{J}\right)\label{eq:def-alpha-prime}
\end{equation}
and we define the routing matrix $\routingMatrix':=\left(\routingProb'(i,j):i,j\in\nodeSet\right)$
with
\begin{equation}
\routingProb'(i,j):=\routingProb(i,j)+\routingProb(i,0)\cdot\routingProb(0,j).\label{eq:jump-over-routing-matrix}
\end{equation}
Routing matrix $\routingMatrix'$ results from routing matrix $\routingMatrix$
when every resource which wants to go to node~$0$ skips this node
and goes immediately to the next node according to $(\routingProb(0,j):j\in\nodeSet)$.

Now equation \eqref{eq:envirenment-generalized-gordon-newell-gbe}
can be written as
\begin{align}
 & \alpha'\left(\nodeQueue_{1},\ldots,\nodeQueue_{J}\right)\cdot\sum_{i\in\nodeSet}\nodeServiceRate_{i}(\nodeQueue_{i})\sum_{j\in\nodeSet}\routingProb'(i,j)\cdot1_{\left\{ \nodeQueue_{i}>0\right\} }\nonumber \\
 & =\sum_{j\in\nodeSet}\sum_{i\in\nodeSet}\alpha'\left((\nodeQueue_{1},\ldots,\nodeQueue_{J})+\evect_{i}-\evect_{j}\right)\cdot\nodeServiceRate_{i}(\nodeQueue_{i}+1)\cdot\routingProb'(i,j)\cdot1_{\left\{ \nodeQueue_{j}>0\right\} }.\label{eq:envirenment-normal-gordon-newell-gbe}
\end{align}
\eqref{eq:envirenment-normal-gordon-newell-gbe} is the global balance
equation of a Gordon-Newell network with nodes $\nodeSet=\left\{ 1,2,\ldots,\nodes\right\} $,
with $\poolSize$ customers, and with routing matrix $\routingMatrix'$.

The steady-state distribution of this network is
\begin{equation}
\alpha'(\nodeQueue_{1},\ldots,\nodeQueue_{J})=\left[\normBstrich(\nodeSet,\poolSize)\right]^{-1}\prod_{j=1}^{\nodes}\left(\prod_{i=1}^{\nodeQueue_{j}}\frac{\etaRatio'_{j}}{\nodeServiceRate_{j}(i)}\right)\label{eq:stability-distribution-explicite-with-jumps}
\end{equation}
where $\etaRatioVect':=\left(\etaRatio'_{j}:j\in\nodeSet\right)$
is a solution of the traffic equation $\etaRatioVect'\cdot\routingMatrix'=\etaRatioVect'$
and
\[
\normBstrich(\nodeSet,\poolSize):=\sum_{\sum_{j\in\nodeSet}\nodeQueue_{j}=\poolSize}\prod_{j=1}^{\nodes}\left(\prod_{i=1}^{\nodeQueue_{j}}\frac{\etaRatio'_{j}}{\nodeServiceRate_{j}(i)}\right)
\]
is the normalisation constant. Because of the special structure \eqref{eq:jump-over-routing-matrix}
of matrix $\routingMatrix'$, $\etaRatio'_{j}:=\etaRatio_{j}$ for
all $j\in J$ is a solution of $\etaRatioVect'\cdot\routingMatrix'=\etaRatioVect'$,
see \citet[Proposition 2.1]{krenzler-daduna-otten:2016}. Consequently,
$\normBstrich(\nodeSet,\poolSize)=\normB(\nodeSet,\poolSize)$. Now
we can switch between both interpretations without recalculating $\etaRatioVect'$
and $\normBstrich(\nodeSet,\poolSize)$:
\[
\alpha'(\nodeQueue_{1},\ldots,\nodeQueue_{J})=\left[\normB(\nodeSet,\poolSize)\right]^{-1}\prod_{j=1}^{\nodes}\left(\prod_{i=1}^{\nodeQueue_{j}}\frac{\etaRatio{}_{j}}{\nodeServiceRate_{j}(i)}\right).
\]

We now calculate  $\maxArrival$ explicitly.\begingroup \allowdisplaybreaks
\begin{align*}
 & \phantomeq\maxArrival=\alpha\cdot\mathbf{A}_{-1}\cdot\mathbf{e}\\
 & =\sum_{(0,m_{1},\ldots,m_{J})\in\phaseSpace}\big(\alpha\cdot\mathbf{A}_{-1}\big)\left(0,m_{1},\ldots,m_{J}\right)\\
 & =\sum_{(0,m_{1},\ldots,m_{J})\in\phaseSpace}\Bigg[\sum_{\left(0,\nodeQueue_{1},\ldots,\nodeQueue_{J}\right)\in E_{1}}\alpha\left(0,\nodeQueue_{1},\ldots,\nodeQueue_{J}\right)\cdot a_{-1}\left(\left(0,\nodeQueue_{1},\ldots,\nodeQueue_{J}\right);\left(0,m_{1},\ldots,m_{J}\right)\right)\Bigg]\\
 & =\sum_{\left(0,\nodeQueue_{1},\ldots,\nodeQueue_{J}\right)\in\phaseSpace}\alpha\left(0,\nodeQueue_{1},\ldots,\nodeQueue_{J}\right)\sum_{(0,m_{1},\ldots,m_{J})\in\phaseSpace}a_{-1}\left(\left(0,\nodeQueue_{1},\ldots,\nodeQueue_{J}\right);\left(0,m_{1},\ldots,m_{J}\right)\right)\\
 & =\sum_{\left(0,\nodeQueue_{1},\ldots,\nodeQueue_{J}\right)\in\phaseSpace}\alpha\left(0,\nodeQueue_{1},\ldots,\nodeQueue_{J}\right)\cdot\Bigg[\sum_{i=1}^{\nodes}\sum_{j=1}^{\nodes}\nodeServiceRate_{i}(\nodeQueue_{i})\cdot\routingProb(i,0)\cdot\routingProb(0,j)\cdot1_{\left\{ \nodeQueue_{i}>0\right\} }\Bigg]\\
 & =\sum_{\left(0,\nodeQueue_{1},\ldots,\nodeQueue_{J}\right)\in\phaseSpace}\underbrace{\alpha\left(0,\nodeQueue_{1},\ldots,\nodeQueue_{J}\right)}_{=\alpha'(\nodeQueue_{1},\ldots,\nodeQueue_{J})}\cdot\Bigg[\sum_{i=1}^{\nodes}\nodeServiceRate_{i}(\nodeQueue_{i})\cdot\routingProb(i,0)\cdot1_{\left\{ \nodeQueue_{i}>0\right\} }\cdot\underbrace{\sum_{j=1}^{\nodes}\routingProb(0,j)}_{=1}\Bigg]\\
 & =\sum_{\left(\nodeQueue_{1},\ldots,\nodeQueue_{J}\right)\in\phaseSpace}\alpha'(\nodeQueue_{1},\ldots,\nodeQueue_{J})\cdot\Bigg[\sum_{i=1}^{\nodes}\nodeServiceRate_{i}(\nodeQueue_{i})\cdot1_{\left\{ \nodeQueue_{i}>0\right\} }\cdot\routingProb(i,0)\Bigg]\\
 & =\sum_{i=1}^{\nodes}\Bigg[\underbrace{\sum_{\nodeQueue_{i}=0}^{\poolSize}\sum_{\stackrel{\nodeQueue_{j}\in\left\{ 0,\ldots,\poolSize\right\} ,\ j\in\nodeSet\backslash\left\{ i\right\} }{\sum_{j\in\nodeSet\backslash\left\{ i\right\} }\nodeQueue_{j}=\poolSize-\nodeQueue_{i}}}\alpha'(\nodeQueue_{1},\ldots,\nodeQueue_{J})\cdot\nodeServiceRate_{i}(\nodeQueue_{i})\cdot1_{\left\{ \nodeQueue_{i}>0\right\} }}_{(*)}\Bigg]\cdot\routingProb(i,0).
\end{align*}
\endgroup

The expression ($*$) is the throughput through node $i$ in the Gordon-Newell
network with routing matrix $\routingMatrix'$:
\[
\stabilityNetworkTh i(\poolSize):=\sum_{\nodeQueue_{i}=0}^{\poolSize}\sum_{\stackrel{\nodeQueue_{j}\in\left\{ 0,\ldots,\poolSize\right\} ,\ j\in\nodeSet\backslash\left\{ i\right\} }{\sum_{j\in\nodeSet\backslash\left\{ i\right\} }\nodeQueue_{j}=\poolSize-\nodeQueue_{i}}}\alpha'(\nodeQueue_{1},\ldots,\nodeQueue_{J})\cdot\nodeServiceRate_{i}(\nodeQueue_{i})\cdot1_{\left\{ \nodeQueue_{i}>0\right\} },\qquad i\in\nodeSet.
\]
We can now write
\begin{equation}
\maxArrival=\sum_{i=1}^{\nodes}\stabilityNetworkTh i(\poolSize)\cdot\routingProb(i,0).\label{eq:rhs-stability-with-thi}
\end{equation}

According to \citet[p. 374, (8.14)]{Bolch:1998:QNM:289350}
\[
\stabilityNetworkTh i(\poolSize)=\etaRatio_{i}\cdot\frac{\normB(\nodeSet,\poolSize-1)}{\normB(\nodeSet,\poolSize)},
\]
therefore,
\begin{align*}
\maxArrival & =\sum_{i=1}^{\nodes}\etaRatio_{i}\cdot\frac{\normB(\nodeSet,\poolSize-1)}{\normB(\nodeSet,\poolSize)}\cdot\routingProb(i,0)=\frac{\normB(\nodeSet,\poolSize-1)}{\normB(\nodeSet,\poolSize)}\underbrace{\sum_{i=1}^{\nodes}\etaRatio_{i}\cdot\routingProb(i,0)}_{=\etaRatio_{0}}.
\end{align*}
\end{proof}
\begin{rem}
\label{rem:lambdamax-th0}The right-hand side of equation \eqref{eq:rhs-stability-with-thi}
is the throughput through node $0$ in the {\stabilityNetwork} in
\prettyref{fig:SOQN-stability-network}. Formally, this means
\begin{equation}
\maxArrival=\stabilityNetworkTh 0(\poolSize)\text{ with }\stabilityNetworkTh 0(\poolSize):=\sum_{i=1}^{\nodes}\stabilityNetworkTh i(\poolSize)\cdot\routingProb(i,0).\label{eq:th0-by-thi}
\end{equation}
The advantage of representation \eqref{eq:th0-by-thi} is that it
uses throughputs $\stabilityNetworkTh i(\poolSize)$, $i\in\nodeSet$,
of a classical Gordon-Newell network with routing matrix $\routingMatrix'$.
We can calculate these throughputs very efficiently with standard
methods, like, for example, mean value analysis (MVA).

With \eqref{eq:max-arrival-from-norm-constants} we have another representation
of $\stabilityNetworkTh 0(\poolSize)$:
\begin{equation}
\stabilityNetworkTh 0(\poolSize)=\etaRatio_{0}\cdot\frac{\normB(\nodeSet,\poolSize-1)}{\normB(\nodeSet,\poolSize)}.\label{eq:stable-th0-by-thi}
\end{equation}

To calculate efficiently the constants on the right-hand side of \eqref{eq:stable-th0-by-thi},
we can use, for example, the convolution algorithm.

Both algorithms are illustrated in \citet[p. 371ff., Section 8.1 and p. 384ff., Section 8.2]{Bolch:1998:QNM:289350}.

From a practical point of view, it is important to know the long-term
behaviour of the system. This behaviour is equal to the behaviour
of the system in steady state. We analyse it in the following sections,
\ref{sec:throughputs-and-idle-times-LC} and \ref{sec:Special-case}.
\end{rem}

\subsection{Throughputs and idle times\label{sec:throughputs-and-idle-times-LC}}

We consider a stable SOQN-BO in steady state. Let $\etaRatioVect:=(\etaRatio_{j}:j\in\nodeSetNull)$
be a solution of $\boldsymbol{x}=\boldsymbol{x}\cdot\routingMatrix$.
Recall that $\etaRatioVect$ is uniquely defined up to a constant.
\begin{prop}
\label{prop:THL-BO} The local throughput at the nodes $j\in\nodeSetNull$
is
\begin{equation}
\ThBO j=\arrival\cdot\frac{\etaRatio_{j}}{\etaRatio_{0}}.\label{eq:TH-BO1}
\end{equation}
\end{prop}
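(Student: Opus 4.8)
The plan is to recognise $(\ThBO{j}:j\in\nodeSetNull)$ as a solution of the traffic equation \eqref{eq:traffic-equation-LS}, which by irreducibility of $\routingMatrix$ forces it to be a scalar multiple of $\etaRatioVect$, and then to fix the scalar by computing the throughput at the resource pool, which I will show equals the external arrival rate $\arrival$. Everything is done in steady state, so I first invoke \prettyref{prop:stability-condition} to guarantee that a stationary distribution $\piB$ exists.

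First I would define the throughputs through $\piB$. For an inner node $j\in\nodeSet$ a resource departs at rate $\nodeServiceRate_{j}(\nodeQueue_{j})\cdot1_{\{\nodeQueue_{j}>0\}}$, so I set $\ThBO{j}:=\sum_{z\in E}\piB(z)\cdot\nodeServiceRate_{j}(\nodeQueue_{j})\cdot1_{\{\nodeQueue_{j}>0\}}$, the stationary rate at which resources leave node $j$ towards any destination (the routing probabilities out of $j$ sum to one). For the pool I define $\ThBO{0}$ as the stationary rate at which resources are matched with customers and enter the inner network; reading off the generator, a match occurs either when an arriving customer finds the pool non-empty or when a returning resource is instantaneously synchronised with a waiting customer, so $\ThBO{0}$ is the stationary mean of $\arrival\cdot1_{\{\nodeQueue_{0}>0\}}+1_{\{\externalQueue>0\}}\sum_{i\in\nodeSet}\nodeServiceRate_{i}(\nodeQueue_{i})\cdot\routingProb(i,0)\cdot1_{\{\nodeQueue_{i}>0\}}$.

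Next I would establish the flow balance. For an inner node $j\in\nodeSet$, stationarity of the bounded occupancy $\nodeQueue_{j}\in\{0,\ldots,\poolSize\}$ forces the rate at which resources enter node $j$ to equal the rate $\ThBO{j}$ at which they leave it. Resources enter $j$ either directly from another inner node, contributing $\sum_{i\in\nodeSet}\ThBO{i}\cdot\routingProb(i,j)$, or out of the pool, contributing $\ThBO{0}\cdot\routingProb(0,j)$; the pool term correctly absorbs both types of match, since in either case the freed resource is sent to $j$ with probability $\routingProb(0,j)$. Hence $\ThBO{j}=\sum_{i\in\nodeSetNull}\ThBO{i}\cdot\routingProb(i,j)$ for $j\in\nodeSet$. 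For node $0$ I would instead invoke conservation of resources in the inner network, whose total is bounded by $\poolSize$: the rate $\ThBO{0}$ of resources entering it equals the rate $\sum_{i\in\nodeSet}\ThBO{i}\cdot\routingProb(i,0)$ of resources leaving it, and since $\routingProb(0,0)=0$ this is the $j=0$ instance of \eqref{eq:traffic-equation-LS}. Thus $(\ThBO{j})$ solves $\boldsymbol{x}=\boldsymbol{x}\cdot\routingMatrix$, and irreducibility of $\routingMatrix$ makes this solution unique up to a positive factor, so $\ThBO{j}=c\cdot\etaRatio_{j}$ for all $j\in\nodeSetNull$ and some $c>0$.

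Finally I would determine $c$ by proving $\ThBO{0}=\arrival$. The total number of customers in the system, $\externalQueue+\sum_{j\in\nodeSet}\nodeQueue_{j}$, is positive recurrent under \prettyref{prop:stability-condition}, so its stationary drift vanishes: the arrival rate $\arrival$ equals the departure rate. A departure occurs precisely when a resource returns to the pool, at rate $\sum_{i\in\nodeSet}\ThBO{i}\cdot\routingProb(i,0)=\ThBO{0}$ by the resource-conservation identity above, so $\ThBO{0}=\arrival$. Setting $j=0$ in $\ThBO{j}=c\cdot\etaRatio_{j}$ gives $c=\arrival/\etaRatio_{0}$, whence $\ThBO{j}=\arrival\cdot\etaRatio_{j}/\etaRatio_{0}$ for every $j\in\nodeSetNull$, which is \eqref{eq:TH-BO1}. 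The main obstacle is the bookkeeping at node $0$: because synchronisation is instantaneous, the physical pool occupancy $\nodeQueue_{0}$ never records the pass-through transitions, so $\ThBO{0}$ cannot be read off an occupancy balance of the pool and must be defined as the matching (flow) rate; the real work is checking that this is exactly the quantity which both closes the traffic equation at node $0$ and equals $\arrival$.
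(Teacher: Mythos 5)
Your proof is correct and follows essentially the same route as the paper's: identify the vector of local throughputs as a solution of the traffic equation $\boldsymbol{x}=\boldsymbol{x}\cdot\routingMatrix$, use irreducibility to pin it down up to a positive constant, and fix the constant via $\ThBO 0=\arrival$. You merely supply more detail than the paper at the two places it argues informally, namely the definition of the node-$0$ throughput under instantaneous synchronisation and the justification that the departure rate equals $\arrival$ in steady state.
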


Note, formula \eqref{eq:TH-BO1} occurs as an approximation in a different
setting in \citep{dallery:1990} as (26). We give a proof for correctness
in our setting.
\begin{proof}
We define for $j\in\nodeSetNull$ in steady state:
\begin{itemize}
\item the mean number of departures from $j$ per time unit is $D_{j}$,
and
\item the mean number of arrivals at $j$ per time unit is $\meanVisits j$.
\end{itemize}
From steady state assumption follows
\[
\ThBO j=\meanVisits j=D_{j}.
\]
For any $j\in\nodeSetNull$ holds
\[
\meanVisits j=\sum_{i\in\nodeSetNull}D_{i}\cdot\routingProb(i,j).
\]
Therefore, the vector $\meanVisits{}:=(\meanVisits j:j\in\nodeSetNull)$
fulfils the set of equations
\[
\meanVisits j=\sum_{i\in\nodeSetNull}\meanVisits i\cdot\routingProb(i,j),\quad j\in\nodeSetNull,~~~\text{which is }~~~\meanVisits{}=\meanVisits{}\cdot\routingMatrix.
\]
This implies that $\meanVisits j=\etaRatio_{j}\cdot K$ for some constant
$K>0$.

Because of $\arrival=\meanVisits 0=\etaRatio_{0}\cdot K$ we have
\[
K=\frac{\arrival}{\etaRatio_{0}},
\]
and therefore,
\[
\meanVisits j=\arrival\cdot\frac{\etaRatio_{j}}{\etaRatio_{0}},\quad j\in\nodeSetNull.
\]
\end{proof}
Note, that \eqref{eq:TH-BO1} shows that $\ThBO j$ does not depend
on normalisation of $\etaRatioVect$.
\begin{cor}
\label{cor:idle-times-BO}Let $Y_{\text{BO}}:=(Y_{\text{BO},j}:j\in\nodeSet)$
denote a random vector which is distributed according to the stationary
queue length at the nodes in $\nodeSet$ of the SOQN-BO. If the service
rate at node $j$ does not depend on the queue length, i.e. $\nodeServiceRate_{j}(\cdot)=\nodeServiceRate_{j},j\in\nodeSet$,
then the probability that node $j$ is idling is
\[
P(Y_{\text{BO},j}=0)=1-\arrival\cdot\frac{\etaRatio_{j}}{\etaRatio_{0}}\cdot\nodeServiceRate_{j}^{-1}.
\]
Note, that the formula above also describes the proportion of time
that node $j$ is idle.
\end{cor}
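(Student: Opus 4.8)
The plan is to exploit the standard relationship between the throughput at a single-server node and its idle probability. For a node $j$ whose service rate is constant, $\nodeServiceRate_{j}(\cdot)=\nodeServiceRate_{j}$, the long-run throughput equals the service rate times the long-run fraction of time the server is busy. In steady state the fraction of busy time equals $P(Y_{\text{BO},j}>0)$ and the fraction of idle time equals $P(Y_{\text{BO},j}=0)$. Hence the first key identity I would establish is
\[
\ThBO j=\nodeServiceRate_{j}\cdot P(Y_{\text{BO},j}>0),
\]
which expresses that departures from node $j$ occur at rate $\nodeServiceRate_{j}$ precisely when the node is non-empty, and by the steady-state balance of \prettyref{prop:THL-BO} this departure rate equals $\ThBO j$.

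From that identity the rest is immediate. Rewriting $P(Y_{\text{BO},j}>0)=1-P(Y_{\text{BO},j}=0)$ and solving gives
\[
P(Y_{\text{BO},j}=0)=1-\frac{\ThBO j}{\nodeServiceRate_{j}}=1-\ThBO j\cdot\nodeServiceRate_{j}^{-1}.
\]
Substituting the closed-form throughput $\ThBO j=\arrival\cdot\etaRatio_{j}/\etaRatio_{0}$ from \prettyref{prop:THL-BO} then yields exactly the claimed expression. The final sentence of the corollary, identifying this probability with the proportion of time node $j$ is idle, follows from the ergodic theorem for the positive-recurrent Markov process $Z_{\text{BO}}$: because the chain is stable (by \prettyref{prop:stability-condition}) and irreducible, the stationary probability of the event $\{Y_{\text{BO},j}=0\}$ coincides almost surely with the long-run time average spent in that set.

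The main obstacle, such as it is, lies in justifying the throughput–utilisation identity $\ThBO j=\nodeServiceRate_{j}\,P(Y_{\text{BO},j}>0)$ cleanly. The subtlety is that $\ThBO j$ in \prettyref{prop:THL-BO} was defined as the stationary departure rate from node $j$, and I must argue that this departure rate equals $\nodeServiceRate_{j}$ integrated against the stationary measure of the states with $Y_{\text{BO},j}>0$. Concretely, I would sum the relevant exit rates (the terms $\nodeServiceRate_{j}(\nodeQueue_{j})\cdot\routingProb(j,\cdot)\cdot1_{\{\nodeQueue_{j}>0\}}$ from the generator) against $\boldsymbol{\pi}_{\text{BO}}$; because $\sum_{k\in\nodeSetNull}\routingProb(j,k)=1$ and the service rate is the constant $\nodeServiceRate_{j}$ whenever $\nodeQueue_{j}>0$, the total departure rate collapses to $\nodeServiceRate_{j}\cdot P(Y_{\text{BO},j}>0)$. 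This requires the constant-rate hypothesis in an essential way: if the rate depended on the queue length, the expectation would not factor through the single probability $P(Y_{\text{BO},j}>0)$, which is exactly why the hypothesis $\nodeServiceRate_{j}(\cdot)=\nodeServiceRate_{j}$ is imposed.
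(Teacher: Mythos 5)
Your proof is correct and follows essentially the same route as the paper: both reduce the claim to the single-server utilisation law $P(Y_{\text{BO},j}>0)=\ThBO j\cdot\nodeServiceRate_{j}^{-1}$ and then substitute the throughput from \prettyref{prop:THL-BO}. The only cosmetic difference is that the paper justifies this identity by invoking Little's formula for the service position ($B_{j}=\lambda_{j}\cdot S_{j}$ with $B_{j}$ the mean number in service), whereas you derive the same identity directly by integrating the departure rates of the generator against the stationary distribution; these are two phrasings of the same step.
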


\pagebreak{}
\begin{proof}
We define for $j\in\nodeSet$ in steady state:
\begin{itemize}
\item the mean number of customers in service is $B_{j}$,
\item the mean service time is $S_{j}$, and
\item the arrival intensity is $\lambda_{j}$ . 
\end{itemize}
According to Little's formula, for every node $j$ it holds $B_{j}=\lambda_{j}\cdot S_{j}$. 

In steady state the arrival rate $\lambda_{j}$ at node $j$ is equal
to its throughput $\ThBO j$. Hence, from \prettyref{prop:THL-LC},
we know that $\lambda_{j}=\ThBO j=\arrival\cdot\frac{\etaRatio_{j}}{\etaRatio_{0}}$.

In every node $j$, with constant service rate $\nodeServiceRate_{j}$,
the mean service time $S_{j}$ is $\nodeServiceRate_{j}^{-1}$.

Now we substitute these known results for $\lambda_{j}$ and $S_{j}$
in Little's formula and get for the mean number of customers in service
\[
B_{j}=\arrival\cdot\frac{\etaRatio_{j}}{\etaRatio_{0}}\cdot\nodeServiceRate_{j}^{-1}.
\]
Consequently, the probability that node $j$ is idling is
\[
P(Y_{\text{BO},j}=0)=1-P(Y_{\text{BO},j}>0)=1-E\left[1_{\left\{ Y_{\text{BO},j}>0\right\} }\right]=1-B_{j}=1-\arrival\cdot\frac{\etaRatio_{j}}{\etaRatio_{0}}\cdot\nodeServiceRate_{j}^{-1}.
\]
\end{proof}

\subsection{Special case: $\protect\nodes=1$\label{sec:Special-case}}

In this section, we consider the special case where the inner network
consists only of one node ($J=1$) as shown in \prettyref{fig:SOQN-small-1}.
For this special case, \citeauthor{avi-itzhak-heyman-1973} calculated
the steady-state distribution in \citep{avi-itzhak-heyman-1973}.
\begin{figure}[h]
\centering{}\includegraphics[width=0.65\textwidth]{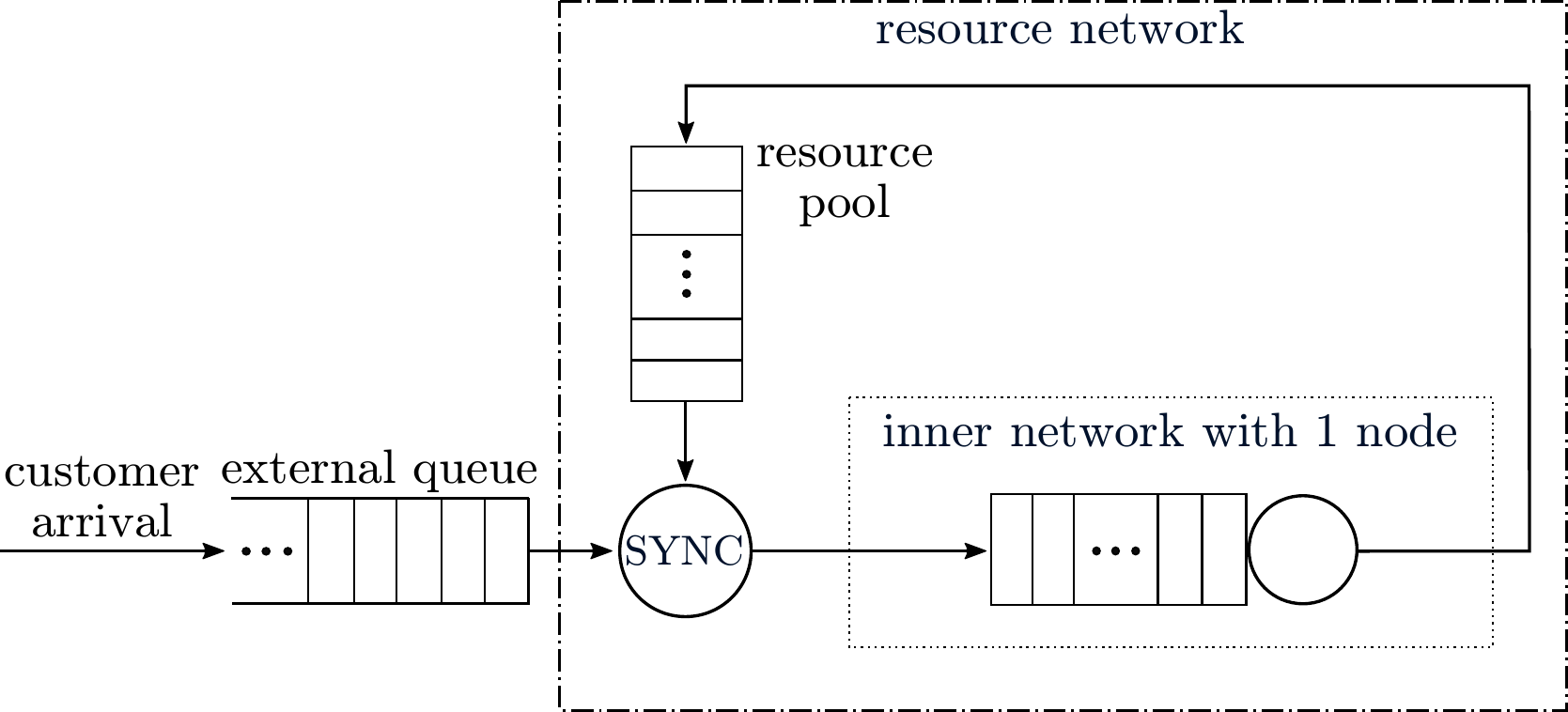}\caption{\label{fig:SOQN-small-1}An SOQN-BO with $\protect\nodes=1$.}
\end{figure}

\begin{thm}
For $\nodes=1$, $Z_{\text{BO}}$ is stable if and only if $\arrival<\nodeServiceRate_{1}(\poolSize)$.
If $Z_{\text{BO}}$ is stable, the limiting and steady-state distribution
$\boldsymbol{\pi}_{\text{BO}}:=\left(\piB\left((\externalQueue,\nodeQueue_{0},\nodeQueue_{1})\right):(\externalQueue,\nodeQueue_{0},\nodeQueue_{1})\in E\right)$
of the process $Z_{\text{BO}}$ is
\begin{equation}
\piB(\externalQueue,\nodeQueue_{0},\nodeQueue_{1})=\piB(\externalQueue,\poolSize-\nodeQueue_{1},\nodeQueue_{1})=\left[\specialNormB(\left\{ 1\right\} ,\poolSize)\right]^{-1}\cdot\prod_{m=1}^{\nodeQueue_{1}}\frac{\arrival}{\nodeServiceRate_{1}(m)}\cdot\left(\frac{\arrival}{\nodeServiceRate_{1}(\poolSize)}\right)^{\externalQueue}\label{eq:MODEL-ALL-TOGETHER-stationary-distribution-1}
\end{equation}
with normalisation constant
\begin{equation}
\specialNormB(\left\{ 1\right\} ,\poolSize)\coloneqq\sum_{m=0}^{\poolSize-1}\prod_{\ell=1}^{m}\frac{\arrival}{\nodeServiceRate_{1}(\ell)}+\prod_{\ell=1}^{\poolSize}\frac{\arrival}{\nodeServiceRate_{1}(\ell)}\cdot\left(\frac{1}{1-\frac{\arrival}{\nodeServiceRate_{1}(\poolSize)}}\right).\label{eq:Modell-all-together-normierungskonstante}
\end{equation}
\end{thm}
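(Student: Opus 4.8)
The plan is to exploit that for $\nodes=1$ the whole network is effectively one-dimensional, so that $Z_{\text{BO}}$ collapses to a scalar birth-and-death process whose stationary distribution is classical. First I would pin down the routing: for $\nodes=1$ we have $\nodeSetNull=\{0,1\}$, and since $\routingProb(0,0)=0$, irreducibility of $\routingMatrix$ forces $\routingProb(0,1)=1$ and $\routingProb(1,0)=1$. In particular the traffic equation \eqref{eq:traffic-equation-LS} gives $\etaRatio_0=\etaRatio_1$, which I normalise to $1$.

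Next I would linearise the state space. On $E$ the constraint $\sum_{j\in\nodeSetNull}\nodeQueue_j=\poolSize$ means that for $\externalQueue=0$ a state is determined by $\nodeQueue_1\in\{0,\ldots,\poolSize\}$ (with $\nodeQueue_0=\poolSize-\nodeQueue_1$), while for $\externalQueue\ge 1$ necessarily $\nodeQueue_0=0$ and $\nodeQueue_1=\poolSize$, so the state is determined by $\externalQueue$ alone. Hence I introduce the single counter $m:=\nodeQueue_1+\externalQueue\in\mathbb{N}_0$ and order the states along it. Reading the rates off the generator $\mathbf{Q}$, an arrival always raises $m$ by one at rate $\arrival$, and a service completion at node $1$ always lowers $m$ by one at rate $\nodeServiceRate_1(\nodeQueue_1)$; since $\nodeQueue_1=m$ for $m\le\poolSize$ and $\nodeQueue_1=\poolSize$ for $m>\poolSize$, the process is a birth-and-death chain with birth rate $\arrival$ in every state and death rate $\nodeServiceRate_1(m)$ for $1\le m\le\poolSize$ and $\nodeServiceRate_1(\poolSize)$ for $m>\poolSize$.

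The detailed-balance solution of a birth-and-death chain, $\pi_m=\pi_0\prod_{k=1}^{m}\bigl(\arrival/\nodeServiceRate_1(\min\{k,\poolSize\})\bigr)$, then reproduces \eqref{eq:MODEL-ALL-TOGETHER-stationary-distribution-1} once $m$ is translated back into the coordinates $(\externalQueue,\nodeQueue_0,\nodeQueue_1)$: for $\externalQueue=0$ the factor $(\arrival/\nodeServiceRate_1(\poolSize))^{\externalQueue}$ equals $1$ and the remaining product is $\prod_{\ell=1}^{\nodeQueue_1}\arrival/\nodeServiceRate_1(\ell)$, while each additional unit of $\externalQueue$ contributes a further factor $\arrival/\nodeServiceRate_1(\poolSize)$.

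Finally I would normalise. Summing $\pi_m$ over $0\le m\le\poolSize$ gives the finite part $\sum_{m=0}^{\poolSize}\prod_{k=1}^{m}\arrival/\nodeServiceRate_1(k)$, and the tail $m>\poolSize$ is a geometric series with ratio $\arrival/\nodeServiceRate_1(\poolSize)$; using $\sum_{k\ge 1}x^k=x/(1-x)$ and combining the $m=\poolSize$ term with this tail yields exactly the closed form \eqref{eq:Modell-all-together-normierungskonstante}. The series converges if and only if $\arrival<\nodeServiceRate_1(\poolSize)$, which matches \prettyref{prop:stability-condition} specialised to $\nodes=1$ (there $\normB(\{1\},m)=\prod_{i=1}^{m}\etaRatio_1/\nodeServiceRate_1(i)$, hence $\maxArrival=\etaRatio_0\,\nodeServiceRate_1(\poolSize)/\etaRatio_1=\nodeServiceRate_1(\poolSize)$). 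Since $Z_{\text{BO}}$ is irreducible, summability of this stationary measure is equivalent to positive recurrence, which gives both the stability claim and the identification of the limiting with the stationary distribution. The computation is routine; the only points needing care are the boundary at $m=\poolSize$, where the phase $\nodeQueue_1$ stops varying, and the algebraic step merging the $m=\poolSize$ term with the geometric tail to recover $\specialNormB(\{1\},\poolSize)$.
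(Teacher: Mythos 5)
Your proof is correct and, unlike the paper's, self-contained. The paper disposes of this theorem by citing Avi-Itzhak and Heyman (1973) for the distribution of the total count $\externalQueueProc+\nodeProc_{1}$ and merely records the bijection between that count and the full state $(\externalQueue,\nodeQueue_{0},\nodeQueue_{1})$ --- exactly the observation you make when you introduce $m=\nodeQueue_{1}+\externalQueue$. What you supply is the derivation the cited source performs: the collapse of $Z_{\text{BO}}$ to a birth-and-death chain with birth rate $\arrival$ in every state and death rate $\nodeServiceRate_{1}(\min\{m,\poolSize\})$, the detailed-balance product solution, and the summability criterion. This buys a proof verifiable within the paper that delivers the stationary distribution, the stability threshold, and the identification of the limiting with the stationary distribution in one stroke (the last step uses the standing regularity assumption to exclude explosion, which here can also be checked directly since the birth rates are bounded by $\arrival$), at the cost of a page of routine computation that the paper outsources.

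One small inaccuracy: irreducibility of $\routingMatrix$ forces $\routingProb(0,1)=1$ (because $\routingProb(0,0)=0$ and the row sums to one) but only $\routingProb(1,0)>0$, not $\routingProb(1,0)=1$; a self-loop $\routingProb(1,1)>0$ is admissible in the general model. In that case the traffic equation gives $\etaRatio_{0}/\etaRatio_{1}=\routingProb(1,0)$, so \prettyref{prop:stability-condition} yields $\maxArrival=\routingProb(1,0)\cdot\nodeServiceRate_{1}(\poolSize)$, and the theorem as stated implicitly assumes $\routingProb(1,0)=1$ (consistent with \prettyref{fig:SOQN-small-1} and the cited reference). You should either state this as a hypothesis or observe that a self-loop contributes no off-diagonal generator entry and merely thins the effective death rate to $\nodeServiceRate_{1}(\nodeQueue_{1})\cdot\routingProb(1,0)$, after which your argument goes through verbatim.
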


\begin{proof}
See \citep[equations (20) and (21)]{avi-itzhak-heyman-1973}. In that
paper, the authors calculated the steady-state probabilities for $P\left(\externalQueueProc+\nodeProc_{1}=m\right)$.
From these results we can easily obtain the probabilities of the states
of all queues, because for $m\leq N$ it holds $\externalQueueProc+\nodeProc_{1}=m\Leftrightarrow\externalQueueProc=0\land\nodeProc_{1}=m\land Y_{0}=N-m$
and otherwise $\externalQueueProc+\nodeProc_{1}=m\Leftrightarrow\externalQueueProc=m-N\land\nodeProc_{1}=N\land Y_{0}=0$.
\end{proof}
\begin{prop}
\label{prop:marginal-distribution-of-a-simple-system}Let $(\externalQueueProcRV,\nodeProcRV_{0},\nodeProcRV_{1})$
denote a random vector which is distributed according to the limiting
and steady-state distribution of $Z_{\text{BO}}$ in the SOQN-BO with
$\nodes=1$.
\begin{enumerate}
\item [(i)]For the marginal distributions it holds:
\begin{equation}
P(\externalQueueProcRV=0)=\left[\specialNormB(\left\{ 1\right\} ,\poolSize)\right]^{-1}\cdot\sum_{\nodeQueue_{1}=0}^{\robotnumb}\prod_{m=1}^{\nodeQueue_{1}}\frac{\arrival}{\nodeServiceRate_{1}(m)}\label{eq:eq:small-backordering-p-X-ex-0}
\end{equation}
and for $\externalQueue>0$
\begin{equation}
P(\externalQueueProcRV=\externalQueue)=\left[\specialNormB(\left\{ 1\right\} ,\poolSize)\right]^{-1}\cdot\prod_{m=1}^{\poolSize}\frac{\arrival}{\nodeServiceRate_{1}(m)}\cdot\left(\frac{\arrival}{\nodeServiceRate_{1}(\poolSize)}\right)^{\externalQueue}.\label{eq:small-backordering-p-X-ex-n}
\end{equation}
For $0\leq\nodeQueue_{1}<\poolSize$
\begin{equation}
P(\nodeProcRV_{0}=\poolSize-\nodeQueue_{1},\nodeProcRV_{1}=\nodeQueue_{1})=\left[\specialNormB(\left\{ 1\right\} ,\poolSize)\right]^{-1}\cdot\prod_{m=1}^{\nodeQueue_{1}}\frac{\arrival}{\nodeServiceRate_{1}(m)}\label{eq:small-backordering-p-Y-n}
\end{equation}
and for $\nodeQueue_{1}=\poolSize$
\begin{equation}
P(\nodeProcRV_{0}=0,\nodeProcRV_{1}=\poolSize)=\left[\specialNormB(\left\{ 1\right\} ,\poolSize)\right]^{-1}\cdot\prod_{m=1}^{\poolSize}\frac{\arrival}{\nodeServiceRate_{1}(m)}\cdot\frac{1}{1-\frac{\arrival}{\nodeServiceRate_{1}(\poolSize)}}.\label{eq:randvtlg-y}
\end{equation}
\item [(ii)]The average external queue length is
\begin{align}
\externalLength & =P(\nodeProcRV_{0}=0,\nodeProcRV_{1}=\poolSize)\cdot\frac{\arrival}{\nodeServiceRate_{1}(\poolSize)-\arrival}\label{eq:small-backordering-L-ex-0}
\end{align}
and the average waiting time of customers in the external queue is
\begin{equation}
\externalWaitingTime=\frac{\externalLength}{\arrival}=P(\nodeProcRV_{0}=0,\nodeProcRV_{1}=\poolSize)\cdot\frac{1}{\nodeServiceRate_{1}(\poolSize)-\arrival}.\label{eq:waiting-time-ex}
\end{equation}
\end{enumerate}
\end{prop}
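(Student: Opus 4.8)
The plan is to read off each quantity directly from the explicit stationary distribution \eqref{eq:MODEL-ALL-TOGETHER-stationary-distribution-1}, exploiting the rigid structure of the state space $E$ in the case $\nodes=1$: a resource either sits in the pool or at node~$1$, so $\nodeQueue_{0}=\poolSize-\nodeQueue_{1}$ whenever $\externalQueue=0$, while a nonempty external queue forces $\nodeQueue_{0}=0$ and $\nodeQueue_{1}=\poolSize$. Thus every marginal event is either a single state or a one-parameter slice of $E$, and part~(i) reduces to summing the closed-form $\piB$ over the correct index set. Part~(ii) is then an expectation over the resulting geometric tail, followed by Little's law.

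For part~(i) I would treat the four events separately. For $\{\externalQueueProcRV=0\}$ I sum $\piB(0,\poolSize-\nodeQueue_{1},\nodeQueue_{1})$ over $\nodeQueue_{1}\in\{0,\dots,\poolSize\}$; since the external-queue factor $(\arrival/\nodeServiceRate_{1}(\poolSize))^{0}$ equals $1$, only the product $\prod_{m=1}^{\nodeQueue_{1}}\arrival/\nodeServiceRate_{1}(m)$ survives, giving \eqref{eq:eq:small-backordering-p-X-ex-0}. For $\externalQueue>0$ the event $\{\externalQueueProcRV=\externalQueue\}$ is the single state $(\externalQueue,0,\poolSize)$, so \eqref{eq:small-backordering-p-X-ex-n} is just $\piB$ evaluated there. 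Likewise, for $0\le\nodeQueue_{1}<\poolSize$ the event $\{\nodeProcRV_{0}=\poolSize-\nodeQueue_{1},\nodeProcRV_{1}=\nodeQueue_{1}\}$ forces $\externalQueue=0$ (the pool is nonempty), so it is again a single state and yields \eqref{eq:small-backordering-p-Y-n}. The only aggregation occurs for $\{\nodeProcRV_{0}=0,\nodeProcRV_{1}=\poolSize\}$, which collects the whole tail $\externalQueue\in\{0,1,2,\dots\}$; summing the geometric series in $\rho:=\arrival/\nodeServiceRate_{1}(\poolSize)$ (convergent because the stability condition gives $\arrival<\nodeServiceRate_{1}(\poolSize)$, hence $\rho<1$) produces the factor $1/(1-\rho)$ and thus \eqref{eq:randvtlg-y}.

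For part~(ii) I compute $\externalLength=E[\externalQueueProcRV]=\sum_{\externalQueue\ge1}\externalQueue\,P(\externalQueueProcRV=\externalQueue)$. Factoring out the constant $C:=[\specialNormB(\{1\},\poolSize)]^{-1}\prod_{m=1}^{\poolSize}\arrival/\nodeServiceRate_{1}(m)$ from \eqref{eq:small-backordering-p-X-ex-n} leaves the arithmetico-geometric sum $\sum_{\externalQueue\ge1}\externalQueue\,\rho^{\externalQueue}=\rho/(1-\rho)^{2}$. Recognising $C/(1-\rho)$ as $P(\nodeProcRV_{0}=0,\nodeProcRV_{1}=\poolSize)$ from \eqref{eq:randvtlg-y} rewrites this as $P(\nodeProcRV_{0}=0,\nodeProcRV_{1}=\poolSize)\cdot\rho/(1-\rho)$, and $\rho/(1-\rho)=\arrival/(\nodeServiceRate_{1}(\poolSize)-\arrival)$ gives \eqref{eq:small-backordering-L-ex-0}. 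Finally, Little's law applied to the external queue, whose arrival rate is $\arrival$, yields $\externalWaitingTime=\externalLength/\arrival$; dividing out one factor of $\arrival$ produces \eqref{eq:waiting-time-ex}.

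The computations are routine, and the only points needing care are bookkeeping ones. The main thing to get right is the geometry of $E$ -- in particular that the single marginal event $\{\nodeProcRV_{0}=0,\nodeProcRV_{1}=\poolSize\}$ must absorb the entire infinite external-queue tail, whereas every event with $\nodeQueue_{1}<\poolSize$ collapses to one state -- together with the convergence of the (arithmetico-)geometric series, which is exactly the content of the stability condition $\arrival<\nodeServiceRate_{1}(\poolSize)$. I expect no genuine analytic obstacle beyond these.
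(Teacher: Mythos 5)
Your proposal is correct and follows essentially the same route as the paper's proof: each marginal is obtained by summing the explicit stationary distribution \eqref{eq:MODEL-ALL-TOGETHER-stationary-distribution-1} over the relevant slice of the state space (a single state except for $\{\widehat{Y}_{0}=0,\widehat{Y}_{1}=N\}$, which absorbs the geometric tail), and part (ii) is the same arithmetico-geometric computation followed by Little's law. No gaps.
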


\begin{proof}
$ $
\begin{enumerate}
\item [(i)]For \eqref{eq:eq:small-backordering-p-X-ex-0} we calculate
\begin{align*}
P(\externalQueueProcRV=0) & \overset{\hphantom{\eqref{eq:MODEL-ALL-TOGETHER-stationary-distribution-1}}}{=}\sum_{\nodeQueue_{1}=0}^{\robotnumb}P(\externalQueueProcRV=0,\nodeProcRV_{0}=\poolSize-\nodeQueue_{1},\nodeProcRV_{1}=\nodeQueue_{1})\\
 & \overset{\eqref{eq:MODEL-ALL-TOGETHER-stationary-distribution-1}}{=}\left[\specialNormB(\left\{ 1\right\} ,\poolSize)\right]^{-1}\cdot\sum_{\nodeQueue_{1}=0}^{\robotnumb}\prod_{m=1}^{\nodeQueue_{1}}\frac{\arrival}{\nodeServiceRate_{1}(m)}.
\end{align*}
For \eqref{eq:small-backordering-p-X-ex-n} we calculate
\begin{align*}
P(\externalQueueProcRV=\externalQueue) & \overset{\hphantom{\eqref{eq:MODEL-ALL-TOGETHER-stationary-distribution-1}}}{=}P(\externalQueueProcRV=\externalQueue,\nodeProcRV_{0}=0,\nodeProcRV_{1}=\poolSize)\\
 & \overset{\eqref{eq:MODEL-ALL-TOGETHER-stationary-distribution-1}}{=}\left[\specialNormB(\left\{ 1\right\} ,\poolSize)\right]^{-1}\cdot\prod_{m=1}^{\poolSize}\frac{\arrival}{\nodeServiceRate_{1}(m)}\cdot\left(\frac{\arrival}{\nodeServiceRate_{1}(\poolSize)}\right)^{\externalQueue}.
\end{align*}
For \eqref{eq:small-backordering-p-Y-n} we calculate
\begin{align*}
P(\nodeProcRV_{0}=\poolSize-\nodeQueue_{1},\nodeProcRV_{1}=\nodeQueue_{1}) & \overset{\hphantom{\eqref{eq:MODEL-ALL-TOGETHER-stationary-distribution-1}}}{=}P(\externalQueueProcRV=0,\nodeProcRV_{0}=\poolSize-\nodeQueue_{1},\nodeProcRV_{1}=\nodeQueue_{1})\\
 & \overset{\eqref{eq:MODEL-ALL-TOGETHER-stationary-distribution-1}}{=}\left[\specialNormB(\left\{ 1\right\} ,\poolSize)\right]^{-1}\cdot\prod_{m=1}^{\nodeQueue_{1}}\frac{\arrival}{\nodeServiceRate_{1}(m)}.
\end{align*}
For \eqref{eq:randvtlg-y} we calculate
\begin{align*}
P(\nodeProcRV_{0}=0,\nodeProcRV_{1}=\poolSize) & \overset{\hphantom{\eqref{eq:MODEL-ALL-TOGETHER-stationary-distribution-1}}}{=}\sum_{\externalQueue=0}^{\infty}P(\externalQueueProcRV=\externalQueue,\nodeProcRV_{0}=0,\nodeProcRV_{1}=\poolSize)\\
 & \overset{\eqref{eq:MODEL-ALL-TOGETHER-stationary-distribution-1}}{=}\left[\specialNormB(\left\{ 1\right\} ,\poolSize)\right]^{-1}\cdot\prod_{m=1}^{\poolSize}\frac{\arrival}{\nodeServiceRate_{1}(m)}\cdot\sum_{\externalQueue=0}^{\infty}\left(\frac{\arrival}{\nodeServiceRate_{1}(\poolSize)}\right)^{\externalQueue}.
\end{align*}
\item [(ii)]We use the marginal distributions in (i) to calculate the
average external queue length in steady state.
\begin{align*}
\externalLength & =\sum_{\externalQueue=0}^{\infty}\externalQueue\cdot P(\externalQueueProcRV=\externalQueue)=\left[\specialNormB(\left\{ 1\right\} ,\poolSize)\right]^{-1}\cdot\prod_{m=1}^{\poolSize}\frac{\arrival}{\nodeServiceRate_{1}(m)}\cdot\sum_{\externalQueue=1}^{\infty}\externalQueue\cdot\left(\frac{\arrival}{\nodeServiceRate_{1}(\poolSize)}\right)^{\externalQueue}\\
 & =\left[\specialNormB(\left\{ 1\right\} ,\poolSize)\right]^{-1}\cdot\prod_{m=1}^{\poolSize}\frac{\arrival}{\nodeServiceRate_{1}(m)}\cdot\frac{\frac{\arrival}{\nodeServiceRate_{1}(\poolSize)}}{\left(1-\frac{\arrival}{\nodeServiceRate_{1}(\poolSize)}\right)^{2}}\overset{\eqref{eq:randvtlg-y}}{=}P(\nodeProcRV_{0}=0,\nodeProcRV_{1}=\poolSize)\cdot\frac{\frac{\arrival}{\nodeServiceRate_{1}(\poolSize)}}{1-\frac{\arrival}{\nodeServiceRate_{1}(\poolSize)}}\\
 & =P(\nodeProcRV_{0}=0,\nodeProcRV_{1}=\poolSize)\cdot\frac{\frac{\arrival}{\nodeServiceRate_{1}(\poolSize)}}{\frac{\nodeServiceRate_{1}(\poolSize)-\arrival}{\nodeServiceRate_{1}(\poolSize)}}=P(\nodeProcRV_{0}=0,\nodeProcRV_{1}=\poolSize)\cdot\frac{\arrival}{\nodeServiceRate_{1}(\poolSize)-\arrival}.
\end{align*}
Equation \eqref{eq:waiting-time-ex} follows from Little's law, see,
for example, \citet{little-graves:2008}.
\end{enumerate}
\end{proof}
An important practical question about SOQN systems in this paper is:
``Do more resources allow more system throughput?'' \prettyref{prop:monoton-for-1}
shows that, even for the simple system with $\nodes=1$, the surprising
answer is: ``It depends.'' We also know that in systems with $\nodes\geq1$
whose service rates are non-decreasing in the number of customers,
more resources lead to equal or higher throughput.
\begin{prop}
\label{prop:monoton-for-more-1}Let $\nodes\geq1$ and all service
rates be non-decreasing in the number of customers, i.e.~is $\nodeServiceRate_{j}(n+1)\geq\nodeServiceRate_{j}(n)$,
$n\in\{0,\ldots,N-1\}$, for all $j\in\nodeSet$. Then $\maxArrival=\stabilityNetworkTh 0(\cdot)$
is non-decreasing on $\mathbb{N}$.
\end{prop}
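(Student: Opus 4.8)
The plan is to reduce monotonicity of $\stabilityNetworkTh{0}(\cdot)$ to a \emph{log-concavity} property of the Gordon--Newell normalisation constants, and then to obtain that property from the fact that these constants are a convolution of single-node sequences. By \prettyref{prop:stability-condition}, in particular by \eqref{eq:max-arrival-from-norm-constants}, for every $N\in\mathbb{N}$
\begin{equation*}
\maxArrival=\stabilityNetworkTh{0}(N)=\etaRatio_{0}\cdot\frac{\normB(\nodeSet,N-1)}{\normB(\nodeSet,N)},
\end{equation*}
where $\etaRatio_{0}>0$ and $\normB(\nodeSet,k)>0$ for all $k\ge 0$. Dividing by $\etaRatio_{0}$ and cross-multiplying the positive terms, the inequality $\stabilityNetworkTh{0}(N+1)\ge\stabilityNetworkTh{0}(N)$ is seen to be equivalent to
\begin{equation*}
\normB(\nodeSet,N)^{2}\ge\normB(\nodeSet,N-1)\cdot\normB(\nodeSet,N+1),\qquad N\in\mathbb{N}.
\end{equation*}
So it suffices to prove that the sequence $\big(\normB(\nodeSet,N)\big)_{N\ge 0}$ is log-concave.

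Next I would recognise this sequence as a convolution. For each $j\in\nodeSet$ put $g_{j}(n):=\prod_{i=1}^{n}\etaRatio_{j}/\nodeServiceRate_{j}(i)$ for $n\ge 0$ (an empty product, so $g_{j}(0)=1$); then by the very definition of the normalisation constant
\begin{equation*}
\normB(\nodeSet,N)=\sum_{\sum_{j\in\nodeSet}\nodeQueue_{j}=N}\;\prod_{j=1}^{\nodes}g_{j}(\nodeQueue_{j}),
\end{equation*}
i.e.\ $\big(\normB(\nodeSet,N)\big)_{N}$ is the $\nodes$-fold convolution $g_{1}*\cdots*g_{\nodes}$, a finite sum of strictly positive terms for each $N$. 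Each factor sequence $g_{j}$ is strictly positive and log-concave, because $g_{j}(n)/g_{j}(n-1)=\etaRatio_{j}/\nodeServiceRate_{j}(n)$ is non-increasing in $n$ precisely when $\nodeServiceRate_{j}$ is non-decreasing, which is our hypothesis; and a positive ratio being non-increasing is exactly $g_{j}(n)^{2}\ge g_{j}(n-1)g_{j}(n+1)$.

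It then remains to show that a convolution of finitely many strictly positive log-concave sequences is again log-concave; by associativity it is enough to treat two such sequences $a$ and $b$ and to induct on $\nodes$. The route I would take is the total-positivity one: a strictly positive sequence $c$ is log-concave if and only if its Toeplitz matrix $\big(c_{p-q}\big)_{p,q}$ is totally positive of order $2$, convolution corresponds to the product of these Toeplitz matrices, and by the Cauchy--Binet formula every $2\times 2$ minor of the product is a nonnegative combination of products of $2\times 2$ minors of the factors, which are nonnegative by the log-concavity of $a$ and $b$. Since only finitely many indices matter for each fixed $N$, one may truncate to finite sequences and avoid any convergence issue. This is the classical statement that a convolution of P\'olya frequency sequences of order $2$ is again of that type (Hoggar's theorem). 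Applying it to $g_{1}*\cdots*g_{\nodes}$ yields the log-concavity of $\normB(\nodeSet,\cdot)$, and hence the monotonicity claimed for $\maxArrival=\stabilityNetworkTh{0}(\cdot)$.

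The one genuine obstacle is this convolution lemma. Log-concavity is \emph{not} preserved under convolution term by term: symmetrising the summands of $\normB(\nodeSet,N)^{2}-\normB(\nodeSet,N-1)\normB(\nodeSet,N+1)$ leaves individual brackets that can be negative, so a global argument is unavoidable. The total-positivity reduction (equivalently Hoggar's theorem) supplies it, and it relies on the sequences $g_{j}$ having no internal zeros -- which holds here because every factor $\etaRatio_{j}/\nodeServiceRate_{j}(i)$ is strictly positive. Everything else in the argument is an immediate consequence of \prettyref{prop:stability-condition}.
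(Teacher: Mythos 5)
Your proposal is correct, but it takes a genuinely different route from the paper: the paper's entire proof is a citation of Van der Wal \citep{vanderWal1989}, whose argument for throughput monotonicity in closed exponential networks is a Markov-reward/inductive comparison, not an algebraic one. You instead reduce the claim, exactly as the paper does for the single-node case in \prettyref{prop:monoton-for-1} via \eqref{eq:1}, to log-concavity of $N\mapsto\normB(\nodeSet,N)$, identify this sequence as the convolution $g_{1}*\cdots*g_{\nodes}$ of the per-node sequences $g_{j}(n)=\prod_{i=1}^{n}\etaRatio_{j}/\nodeServiceRate_{j}(i)$, observe that each $g_{j}$ is strictly positive and log-concave precisely because $\nodeServiceRate_{j}$ is non-decreasing, and close the argument with the classical fact that convolution preserves log-concavity for positive sequences without internal zeros (Hoggar / PF$_{2}$ closure, via the Toeplitz--Cauchy--Binet argument). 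All steps check out, including your correct warning that log-concavity is not preserved termwise so the total-positivity machinery is genuinely needed, and the truncation remark disposes of the only infinite-sequence issue. What your route buys is a self-contained proof that visibly extends the paper's own $\nodes=1$ argument and stays entirely within the product-form normalisation constants already computed in \prettyref{prop:stability-condition}; what the paper's citation buys is brevity and a pointer to a result whose proof technique does not depend on having the normalisation constants in closed form. Your approach is essentially the Shanthikumar--Yao style second-order argument, and it would be a legitimate replacement for the citation.
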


\begin{proof}
See Van der Wal \Citep{vanderWal1989}.
\end{proof}
\begin{prop}
\label{prop:monoton-for-1}Let $\nodes=1$. Then the following are
equivalent:
\begin{itemize}
\item [(i)]$\maxArrival=\stabilityNetworkTh 0(\cdot)$ is non-decreasing
on $\mathbb{N}$.
\item [(ii)]$\nodeServiceRate_{1}$ is non-decreasing on $\mathbb{N}$.
\end{itemize}
\end{prop}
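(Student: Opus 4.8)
The plan is to specialise the closed-form expression for $\maxArrival$ obtained in \prettyref{prop:stability-condition} to the case $\nodes=1$ and to show that it reduces to a strictly positive constant multiple of $\nodeServiceRate_{1}$. Once this is established, both implications follow at once. It is worth noting that the direction (ii)$\Rightarrow$(i) is already covered by \prettyref{prop:monoton-for-more-1}, so the genuinely new content is the converse (i)$\Rightarrow$(ii); nevertheless, the explicit formula dispatches both directions simultaneously and is the cleanest route.

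First I would evaluate the normalisation constant $\normB(\nodeSet,\poolSize)$ at $\nodeSet=\{1\}$. Its defining sum ranges over all $(\nodeQueue_{j})$ with $\sum_{j\in\{1\}}\nodeQueue_{j}=\poolSize$, which forces the single value $\nodeQueue_{1}=\poolSize$ and collapses the sum to one term, giving $\normB(\{1\},N)=\prod_{i=1}^{N}\frac{\etaRatio_{1}}{\nodeServiceRate_{1}(i)}$ (the empty-product convention handling $N=0$). Consequently the ratio appearing in \eqref{eq:max-arrival-from-norm-constants} telescopes to $\normB(\{1\},N-1)/\normB(\{1\},N)=\nodeServiceRate_{1}(N)/\etaRatio_{1}$, and substituting into \eqref{eq:max-arrival-from-norm-constants} yields $\stabilityNetworkTh 0(N)=\maxArrival(N)=\frac{\etaRatio_{0}}{\etaRatio_{1}}\cdot\nodeServiceRate_{1}(N)$.

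It then remains to observe that the factor $\etaRatio_{0}/\etaRatio_{1}$ does not depend on $N$. Solving the traffic equation \eqref{eq:traffic-equation-LS} for $\nodes=1$ gives $\etaRatio_{0}=\etaRatio_{1}\cdot\routingProb(1,0)$, so that $\etaRatio_{0}/\etaRatio_{1}=\routingProb(1,0)$, and irreducibility of $\routingMatrix$ forces $\routingProb(1,0)>0$. Writing $c:=\etaRatio_{0}/\etaRatio_{1}>0$, we obtain $\stabilityNetworkTh 0(N)=c\cdot\nodeServiceRate_{1}(N)$ with $c$ a fixed positive constant, so for every $N\in\mathbb{N}$ the inequality $\stabilityNetworkTh 0(N+1)\geq\stabilityNetworkTh 0(N)$ holds if and only if $\nodeServiceRate_{1}(N+1)\geq\nodeServiceRate_{1}(N)$. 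This proves (i)$\Leftrightarrow$(ii).

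There is no real analytic difficulty here: the only points requiring care are purely formal, namely invoking the empty-product convention so that $\normB(\{1\},0)=1$ is available for the case $N=1$, and checking that $c>0$ so that multiplication by $c$ preserves the direction of the inequality. The one conceptual step is recognising that, for $\nodes=1$, the Gordon-Newell normalisation constant degenerates to a single telescoping product, which is exactly what makes the dependence of $\stabilityNetworkTh 0(\cdot)$ on $\nodeServiceRate_{1}$ transparent and linear.
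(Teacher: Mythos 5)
Your proof is correct and rests on the same key observation as the paper's, namely that for $\nodes=1$ the normalisation constant collapses to the single product $\normB(\{1\},N)=\prod_{i=1}^{N}\etaRatio_{1}/\nodeServiceRate_{1}(i)$, so the ratio in \eqref{eq:max-arrival-from-norm-constants} telescopes to $\nodeServiceRate_{1}(N)/\etaRatio_{1}$. The paper phrases the argument as the equivalence of (i) with the inequality $\normB(\{1\},N-1)\cdot\normB(\{1\},N+1)\leq\normB(\{1\},N)^{2}$ before telescoping, whereas you go straight to the explicit linear formula $\stabilityNetworkTh 0(N)=(\etaRatio_{0}/\etaRatio_{1})\cdot\nodeServiceRate_{1}(N)$; this is only a cosmetic difference, and your positivity check on $\etaRatio_{0}/\etaRatio_{1}$ is sound.
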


\begin{proof}
Because of \eqref{eq:max-arrival-from-norm-constants} and \eqref{eq:th0-by-thi},
(i) is equivalent to
\begin{equation}
\forall\;\poolSize\in\mathbb{N}:\;\normB(\left\{ 1\right\} ,\poolSize-1)\cdot\normB(\left\{ 1\right\} ,\poolSize+1)\leq\normB(\left\{ 1\right\} ,\poolSize)^{2}.\label{eq:1}
\end{equation}
We note that
\begin{align*}
\normB(\left\{ 1\right\} ,\poolSize) & =\prod_{i=1}^{N}\frac{\etaRatio_{1}}{\nodeServiceRate_{1}(i)},\\
\normB(\left\{ 1\right\} ,\poolSize+1) & =\normB(\left\{ 1\right\} ,\poolSize)\cdot\frac{\etaRatio_{1}}{\nodeServiceRate_{1}(\poolSize+1)},\\
\normB(\left\{ 1\right\} ,\poolSize-1) & =\normB(\left\{ 1\right\} ,\poolSize)\cdot\frac{\nodeServiceRate_{1}(\poolSize)}{\etaRatio_{1}}
\end{align*}
implying that $\prettyref{eq:1}$ is equivalent to
\[
\forall\;\poolSize\in\mathbb{N}:\;\frac{\nodeServiceRate_{1}(\poolSize)}{\nodeServiceRate_{1}(\poolSize+1)}\cdot\normB(\left\{ 1\right\} ,\poolSize)^{2}\leq\normB(\left\{ 1\right\} ,\poolSize)^{2}
\]
and because $\normB(\left\{ 1\right\} ,\poolSize)^{2}>0$, it is equivalent
to
\[
\forall\;\poolSize\in\mathbb{N}:\;\nodeServiceRate_{1}(\poolSize)\leq\nodeServiceRate_{1}(\poolSize+1),
\]
which is (ii).
\end{proof}

\section{Approximation of the resource network\label{sec:Approximation}}

Unfortunately, the steady-state distribution of the SOQN-BO for $J>1$
is not known. However, for a modified system, we get closed-form expressions
and product-form results for $\nodes\geq1$, as shown in the next
section.

\subsection{SOQN {\withLostCustomers}\label{sec:Lost-network}}

In this section, we consider a modification of the model from \prettyref{sec:RMFS-general-model},
where newly arriving customers are lost if the resource pool is empty.
Henceforth, we call it \emph{SOQN-LC}. In the literature, this behaviour
is called ``lost customers'', ``lost sales'', ``lost arrivals'' or
``loss systems''. These customers arrive one by one according to a
Poisson process with rate $\arrivalLS>0$. But because of lost customers,
the actual arrival rate to the synchronisation node is smaller. We
call this effective arrival rate $\effarrival(\arrivalLS)$. The SOQN-LC
is on the right of \prettyref{fig:SOQN-lostsales}, and the original
SOQN-BO from \prettyref{sec:RMFS-general-model} is on the left of
\prettyref{fig:SOQN-lostsales}. 

It is well known that such an SOQN-LC turns formally into a closed
Gordon-Newell network which consists only of the resource network,
see \citep[p. 21]{yao2001fundamentals}.

\begin{figure}[h]
\begin{centering}
\includegraphics[width=1\textwidth]{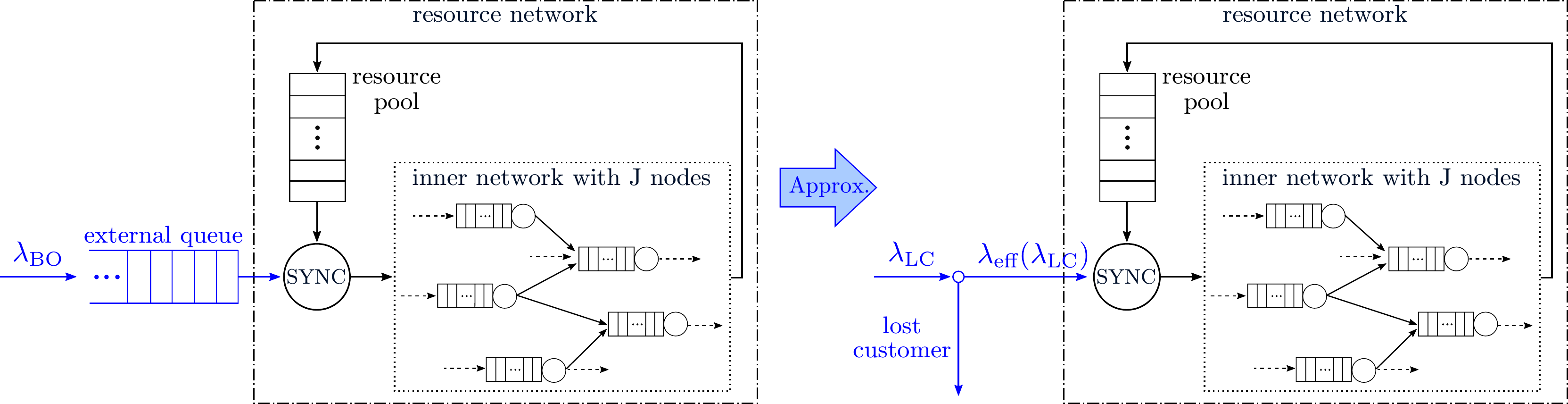}
\par\end{centering}
\centering{}\caption{\label{fig:SOQN-lostsales}Transition from SOQN-BO to SOQN-LC.}
\end{figure}

To obtain a Markovian process description, we denote by $\nodeProc_{0}(t)$
the number of resources in the resource pool at time $t\geq0$ and
by $\nodeProc_{j}(t)$, $j\in\nodeSet$, the number of resources present
at node $j$ in the inner network at time $t\geq0$, either waiting
or in service. We call this number queue length at node $j\in\nodeSet$.

Then $\nodeProcALL(t):=\left(\nodeProc_{j}(t):j\in\nodeSetNull\right)$
is the local queue length vector of the resource network at time $t\geq0$.
We define the joint queue length process of the semi-open network
by
\[
Z_{\text{LC}}:=\left(\nodeProcALL(t):t\geq0\right).
\]
Then, due to the usual independence and memorylessness assumptions
(see the assumptions \vpageref{independence-memorylessness}), $Z_{\text{LC}}$
is a homogeneous Markov process, which is irreducible on the state
space
\begin{align*}
\lostStateSpace & :=\big\{\left(\nodeQueue_{0},\nodeQueue_{1},\ldots,\nodeQueue_{J}\right):\:\nodeQueue_{j}\in\left\{ 0,\ldots,\poolSize\right\} \:\forall j\in\nodeSetNull,\:\sum_{j\in\nodeSetNull}\nodeQueue_{j}=\poolSize\big\}.
\end{align*}

For such a system, closed-form expressions for the steady-state distribution\\
$\boldsymbol{\pi}_{\text{LC}}:=\left(\piLS\left(\nodeQueueALL\right):\nodeQueueALL\in\lostStateSpace\right)$
in product form are available, see \citet[p. 22, Theorem 2.5]{yao2001fundamentals}.
It is for $\nodeQueueALL:=\left(\nodeQueue_{j}:j\in\nodeSetNull\right)\in\lostStateSpace$
\begin{equation}
\piLS\left(\nodeQueueALL\right)=\left[\normLS(\nodeSetNull,\poolSize)\right]^{-1}\cdot\left(\frac{\etaRatio_{0}}{\arrivalLS}\right)^{\nodeQueue_{0}}\cdot\prod_{j=1}^{\nodes}\left(\prod_{i=1}^{\nodeQueue_{j}}\frac{\etaRatio_{j}}{\nodeServiceRate_{j}(i)}\right)\label{eq:BO-GEN-APROX-stationary-distribution-1}
\end{equation}
with normalisation constant
\begin{equation}
\normLS(\nodeSetNull,\poolSize):=\sum_{\sum_{j\in\nodeSetNull}\nodeQueue_{j}=\poolSize}\left(\frac{\etaRatio_{0}}{\arrivalLS}\right)^{\nodeQueue_{0}}\cdot\prod_{j=1}^{\nodes}\left(\prod_{i=1}^{\nodeQueue_{j}}\frac{\etaRatio_{j}}{\nodeServiceRate_{j}(i)}\right).\label{eq:BO-GEN-normalize}
\end{equation}

\subsection{Adjustment\label{sec:Adjustment}}

We want to use the modified system (SOQN-LC) with known results to
approximate the SOQN-BO. But before we do so, we have to ensure that
both systems process in the mean the same number of external customers.
That means they have the same throughput through synchronisation node
$0$. Our main idea is: In order to compensate customers' loss, we
will adjust the input rate $\arrivalLS$ of the {\similar} system
until it reaches the desired throughput. But is it even possible?
Yes, it is! We will prove it in the next \prettyref{thm:theorem-ls}.
But first we need to calculate the throughput of both systems.
\begin{lem}
\label{lem:effektive-throuput-with-lost-customrers}The throughput
of the SOQN-BO in steady state is $\arrival$. The throughput of the
SOQN-LC in steady state is
\[
\effarrival(\arrivalLS)=\arrivalLS\cdot\Bigg(1-\underbrace{\frac{\normB(\nodeSet,\poolSize)}{\normLS(\nodeSetNull,\poolSize)}}_{=\resourceProb(0)}\Bigg)
\]
where $\resourceProb(0)$ is the probability that there are no resources
in the resource pool.
\end{lem}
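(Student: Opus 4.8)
The plan is to treat the two systems separately; each reduces to reading off the correct steady-state probability, after which only bookkeeping remains. For the \emph{SOQN-BO} the claim is immediate from \prettyref{prop:THL-BO}: the local throughput at the synchronisation node $0$ is $\ThBO 0=\arrival\cdot\etaRatio_0/\etaRatio_0=\arrival$. Intuitively, a stable backordering system rejects no customer, so the long-run rate of served customers coincides with the long-run arrival rate $\arrival$.

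For the \emph{SOQN-LC} I would identify the throughput through node $0$ with the long-run rate at which arrivals are actually synchronised with a resource. An arrival occurs at rate $\arrivalLS$ and is accepted exactly when the pool is non-empty, i.e.\ in a state with $\nodeQueue_0>0$, and is lost when $\nodeQueue_0=0$. Summing the acceptance rate against the stationary distribution gives $\effarrival(\arrivalLS)=\arrivalLS\sum_{\nodeQueueALL\in\lostStateSpace,\,\nodeQueue_0>0}\piLS(\nodeQueueALL)=\arrivalLS\bigl(1-\resourceProb(0)\bigr)$, where $\resourceProb(0):=P(\nodeQueue_0=0)$; by PASTA this is just the statement that the loss fraction equals the time-stationary probability that the pool is empty. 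It then remains to evaluate $\resourceProb(0)$.

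To compute $\resourceProb(0)$ I would sum the product form \eqref{eq:BO-GEN-APROX-stationary-distribution-1} over all states of $\lostStateSpace$ with $\nodeQueue_0=0$. On these states the factor $(\etaRatio_0/\arrivalLS)^{\nodeQueue_0}$ equals $1$ and the capacity constraint $\sum_{j\in\nodeSetNull}\nodeQueue_j=\poolSize$ collapses to $\sum_{j\in\nodeSet}\nodeQueue_j=\poolSize$. The surviving sum $\sum_{\sum_{j\in\nodeSet}\nodeQueue_j=\poolSize}\prod_{j=1}^{\nodes}\bigl(\prod_{i=1}^{\nodeQueue_j}\etaRatio_j/\nodeServiceRate_j(i)\bigr)$ is by definition exactly the stability-network normalisation constant $\normB(\nodeSet,\poolSize)$. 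Dividing by $\normLS(\nodeSetNull,\poolSize)$ yields $\resourceProb(0)=\normB(\nodeSet,\poolSize)/\normLS(\nodeSetNull,\poolSize)$, as asserted.

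There is no real analytic obstacle; the argument is essentially bookkeeping with the product form. The one step deserving care is the PASTA/rate identity, where I must make sure that the event seen by a lost customer is the time-stationary event $\{\nodeQueue_0=0\}$, so that the loss fraction is legitimately $\resourceProb(0)$. The only genuine observation is that restricting the product form to $\nodeQueue_0=0$ reproduces the stability-network constant $\normB(\nodeSet,\poolSize)$ rather than some new quantity, which is what makes the closed form clean.
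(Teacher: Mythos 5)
Your proposal is correct and follows essentially the same route as the paper: the backordering throughput equals $\arrival$ because no customer is rejected, and the lost-customer throughput is $\arrivalLS(1-\resourceProb(0))$ with $\resourceProb(0)$ obtained by summing the product form \eqref{eq:BO-GEN-APROX-stationary-distribution-1} over the states with empty pool, which collapses to $\normB(\nodeSet,\poolSize)/\normLS(\nodeSetNull,\poolSize)$. Your explicit invocation of PASTA to justify that the loss fraction equals the time-stationary probability $\resourceProb(0)$ is a welcome sharpening of a step the paper leaves implicit, but it is not a different argument.
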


\begin{proof}
The throughput of the SOQN-BO in steady state is equal to the arrival
rate $\arrival$ because all customers pass through the system.

In contrast, in the SOQN-LC, the proportion $\resourceProb(0)$ of
the customers is lost.

From the steady-state distribution \prettyref{eq:BO-GEN-APROX-stationary-distribution-1},
we directly calculate for the SOQN-LC the probability $\resourceProb(0)$
of the resource pool being empty as
\begin{align}
\resourceProb(0) & :=\sum_{\sum_{j\in\nodeSet}\nodeQueue_{j}=\poolSize}\piLS\left(0,\nodeQueue_{1},\ldots,\nodeQueue_{J}\right)\overset{\prettyref{eq:BO-GEN-APROX-stationary-distribution-1}}{=}\frac{\normB(\nodeSet,\poolSize)}{\normLS(\nodeSetNull,\poolSize)}.\label{eq:BO-GEN-LS-theta-null-1}
\end{align}
Then the effective arrival rate $\effarrival(\arrivalLS)$ and the
throughput $\ThLC j$ of the system is
\[
\effarrival(\arrivalLS)=\ThLC j=\arrivalLS\cdot\left(1-\resourceProb(0)\right).
\]
\end{proof}
Now we can adjust $\arrivalLS$ in such a way that both systems have
the same throughput. We assume that both systems -- with backordering
and {\withLostCustomers} -- are stable. For the SOQN-BO, according
to \prettyref{prop:stability-condition}, stability is equivalent
to $\arrival\in\left(0,\maxArrival\right)$. For the SOQN-LC, stability
is granted for any arrival rate $\arrivalLS\in\left(0,\infty\right)$,
because the state space $\lostStateSpace$ is finite.
\begin{thm}
\label{thm:theorem-ls}For every stable SOQN-BO, there exists an SOQN-LC
with arrival rate $\arrivalLS$ such that both systems have the same
throughput in steady state. Formally, this means:
\[
\text{For all }\arrival\in\left(0,\maxArrival\right)\ \text{exists }\arrivalLS\in\left(0,\infty\right)\ \text{with}\ \effarrival(\arrivalLS)=\arrival,
\]
where $\maxArrival$ is given in \prettyref{eq:max-arrival-from-norm-constants}.
\end{thm}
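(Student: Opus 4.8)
The plan is to turn the existence statement into an intermediate value argument for the single real function $\effarrival$ on $(0,\infty)$. By \prettyref{lem:effektive-throuput-with-lost-customrers} we already have the explicit form $\effarrival(\arrivalLS)=\arrivalLS\,(1-\resourceProb(0))$ with $\resourceProb(0)=\normB(\nodeSet,\poolSize)/\normLS(\nodeSetNull,\poolSize)$, so it suffices to understand how $\effarrival$ behaves at the two ends of the interval. The key preliminary step is to expose the dependence of $\normLS$ on $\arrivalLS$ cleanly. Conditioning on the number $\nodeQueue_{0}=k$ of resources in the pool, the remaining factor in \eqref{eq:BO-GEN-normalize} is exactly the inner-network constant $\normB(\nodeSet,\poolSize-k)$, so
\[
\normLS(\nodeSetNull,\poolSize)=\sum_{k=0}^{\poolSize}\left(\frac{\etaRatio_{0}}{\arrivalLS}\right)^{k}\normB(\nodeSet,\poolSize-k),
\]
a polynomial in $\rho:=\etaRatio_{0}/\arrivalLS$ whose coefficients $\normB(\nodeSet,\poolSize-k)$ do not depend on $\arrivalLS$, with $\normB(\nodeSet,0)=1$ by the empty-product convention.

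Substituting this into the Lemma and cancelling the $k=0$ term of $\normLS(\nodeSetNull,\poolSize)-\normB(\nodeSet,\poolSize)$ in the numerator, I would rewrite
\[
\effarrival(\arrivalLS)=\etaRatio_{0}\cdot\frac{\sum_{k=1}^{\poolSize}\rho^{\,k-1}\normB(\nodeSet,\poolSize-k)}{\sum_{k=0}^{\poolSize}\rho^{\,k}\normB(\nodeSet,\poolSize-k)},\qquad\rho=\frac{\etaRatio_{0}}{\arrivalLS}.
\]
Since $\arrivalLS\mapsto\rho$ is a homeomorphism of $(0,\infty)$ onto itself and the denominator is strictly positive for every $\rho\ge 0$, the map $\effarrival$ is continuous on $(0,\infty)$.

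It remains to evaluate the two boundary limits. As $\arrivalLS\to\infty$ we have $\rho\to 0$, so only the $k=1$ term survives in the numerator and the $k=0$ term in the denominator, giving $\effarrival(\arrivalLS)\to\etaRatio_{0}\,\normB(\nodeSet,\poolSize-1)/\normB(\nodeSet,\poolSize)$, which is exactly $\maxArrival$ by \eqref{eq:max-arrival-from-norm-constants}. As $\arrivalLS\to 0^{+}$ we have $\rho\to\infty$; comparing the leading powers $\rho^{\poolSize-1}$ (numerator) and $\rho^{\poolSize}$ (denominator), both with coefficient $\normB(\nodeSet,0)=1$, yields $\effarrival(\arrivalLS)\to\etaRatio_{0}/\rho=\arrivalLS\to 0$.

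Finally, given any target $\arrival\in(0,\maxArrival)$, continuity of $\effarrival$ together with $\lim_{\arrivalLS\to0^{+}}\effarrival(\arrivalLS)=0$ and $\lim_{\arrivalLS\to\infty}\effarrival(\arrivalLS)=\maxArrival>\arrival$ lets me choose $a$ small and $b$ large with $\effarrival(a)<\arrival<\effarrival(b)$; the intermediate value theorem then produces some $\arrivalLS\in(a,b)$ with $\effarrival(\arrivalLS)=\arrival$. The one step that needs genuine care --- and the heart of the argument --- is matching the $\arrivalLS\to\infty$ limit with $\maxArrival$: everything hinges on the decomposition above producing precisely the ratio $\normB(\nodeSet,\poolSize-1)/\normB(\nodeSet,\poolSize)$ that appears in \prettyref{prop:stability-condition}. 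Monotonicity of $\effarrival$ is not needed for existence, so I would not pursue it.
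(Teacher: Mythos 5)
Your proposal is correct and follows essentially the same route as the paper's own proof: the same expansion of $\normLS(\nodeSetNull,\poolSize)$ in powers of $\etaRatio_{0}/\arrivalLS$ with coefficients $\normB(\nodeSet,\poolSize-k)$, the same identification of the $\arrivalLS\to\infty$ limit with $\maxArrival$, and the same intermediate value argument. The only cosmetic difference is that the paper obtains the limit at $0$ directly from $\effarrival(\arrivalLS)=\arrivalLS\cdot(1-\resourceProb(0))$ with the factor bounded in $(0,1)$, whereas you compare leading powers of $\rho$; both are fine.
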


\begin{proof}
The idea of the proof is to show that for any $\arrival\in\left(0,\maxArrival\right)$,
the function $\effarrival(\arrivalLS)$ from \prettyref{lem:effektive-throuput-with-lost-customrers}
can have values larger than a prescribed $\arrival$, and smaller
than $\arrival$, and is continuous. Thus, by the intermediate value
theorem, there exists $\arrivalLS$ such that $\effarrival(\arrivalLS)=\arrival$.

We first show that $\effarrival(\arrivalLS)=\arrivalLS\cdot\left(1-\frac{\normB(\nodeSet,\poolSize)}{\normLS(\nodeSetNull,\poolSize)}\right)$
can be larger than any given $\arrival$. We analyse the normalisation
constant $\normLS(\nodeSetNull,\poolSize)$:
\begin{align}
\normLS(\nodeSetNull,\poolSize) & =\sum_{\sum_{j\in\nodeSetNull}\nodeQueue_{j}=\poolSize}\left(\frac{\etaRatio_{0}}{\arrivalLS}\right)^{\nodeQueue_{0}}\cdot\prod_{j=1}^{\nodes}\left(\prod_{i=1}^{\nodeQueue_{j}}\frac{\etaRatio_{j}}{\nodeServiceRate_{j}(i)}\right)\nonumber \\
 & =\sum_{\nodeQueue_{0}=0}^{\poolSize}\left(\frac{\etaRatio_{0}}{\arrivalLS}\right)^{\nodeQueue_{0}}\cdot\sum_{\sum_{j\in\nodeSet}\nodeQueue_{j}=\poolSize-\nodeQueue_{0}}\prod_{j=1}^{\nodes}\left(\prod_{i=1}^{\nodeQueue_{j}}\frac{\etaRatio_{j}}{\nodeServiceRate_{j}(i)}\right)\nonumber \\
 & =\sum_{\nodeQueue_{0}=0}^{\poolSize}\left(\frac{\etaRatio_{0}}{\arrivalLS}\right)^{\nodeQueue_{0}}\cdot\normB(\nodeSet,\poolSize-\nodeQueue_{0}).\label{eq:C_LC-depends-from-CB}
\end{align}

To simplify the notation, we define constant $\abrevC{\nodeQueue_{0}}:=\normB(\nodeSet,\poolSize-\nodeQueue_{0})$.
Then $\effarrival(\arrivalLS)$ can be expressed as
\begin{align*}
\effarrival(\arrivalLS) & =\arrivalLS\cdot\left(1-\frac{\normB(\nodeSet,\poolSize)}{\normLS(\nodeSetNull,\poolSize)}\right)=\arrivalLS\cdot\left(1-\frac{\abrevC 0}{\sum_{\nodeQueue_{0}=0}^{\poolSize}\left(\frac{\etaRatio_{0}}{\arrivalLS}\right)^{\nodeQueue_{0}}\abrevC{\nodeQueue_{0}}}\right)\\
 & =\arrivalLS\cdot\left(\frac{\sum_{\nodeQueue_{0}=1}^{\poolSize}\left(\frac{\etaRatio_{0}}{\arrivalLS}\right)^{\nodeQueue_{0}}\abrevC{\nodeQueue_{0}}}{\sum_{\nodeQueue_{0}=0}^{\poolSize}\left(\frac{\etaRatio_{0}}{\arrivalLS}\right)^{\nodeQueue_{0}}\abrevC{\nodeQueue_{0}}}\right)=\etaRatio_{0}\cdot\left(\frac{\sum_{\nodeQueue_{0}=1}^{\poolSize}\left(\frac{\etaRatio_{0}}{\arrivalLS}\right)^{\nodeQueue_{0}-1}\abrevC{\nodeQueue_{0}}}{\abrevC 0+\sum_{\nodeQueue_{0}=1}^{\poolSize}\left(\frac{\etaRatio_{0}}{\arrivalLS}\right)^{\nodeQueue_{0}}\abrevC{\nodeQueue_{0}}}\right)\\
 & =\etaRatio_{0}\cdot\left(\frac{\abrevC 1+\sum_{\nodeQueue_{0}=2}^{\poolSize}\left(\frac{\etaRatio_{0}}{\arrivalLS}\right)^{\nodeQueue_{0}-1}\abrevC{\nodeQueue_{0}}}{\abrevC 0+\sum_{\nodeQueue_{0}=1}^{\poolSize}\left(\frac{\etaRatio_{0}}{\arrivalLS}\right)^{\nodeQueue_{0}}\abrevC{\nodeQueue_{0}}}\right).
\end{align*}
Hence, it holds
\[
\lim_{\arrivalLS\rightarrow\infty}\effarrival(\arrivalLS)=\etaRatio_{0}\cdot\frac{\abrevC 1}{\abrevC 0}=\maxArrival.
\]
Therefore, $\effarrival(\arrivalLS)$ can be larger than any stable
arrival rate $\arrival\in\left(0,\maxArrival\right)$.

Now we show that $\effarrival(\arrivalLS)$ can be smaller than any
stable $\arrival\in\left(0,\maxArrival\right)$. It follows from
\[
\lim_{\arrivalLS\rightarrow0}\effarrival(\arrivalLS)=\lim_{\arrivalLS\rightarrow0}\arrivalLS\cdot\underbrace{\left(1-\resourceProb(0)\right)}_{>0\ \text{and }<1}=0.
\]

Finally, from $\effarrival(\arrivalLS)=\arrivalLS\cdot\left(1-\frac{\abrevC 0}{\sum_{\nodeQueue_{0}=0}^{\poolSize}\left(\frac{\etaRatio_{0}}{\arrivalLS}\right)^{\nodeQueue_{0}}\cdot\abrevC{\nodeQueue_{0}}}\right)$
follows that $\effarrival$ is a continuous function in $\arrivalLS\in\left(0,\infty\right)$,
which proves our claim by the intermediate value theorem.
\end{proof}
Henceforth, we will call $\arrivalLS$ with $\effarrival(\arrivalLS)=\arrival$
\emph{adjusted} arrival rate for $\arrival$. 
\begin{prop}
\label{prop:unique}If the service rates $\nodeServiceRate_{j}(\cdot)$,
$j\in\nodeSet$, are non-decreasing, $\arrivalLS$ in \prettyref{thm:theorem-ls}
is unique.
\end{prop}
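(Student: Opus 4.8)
The plan is to derive uniqueness from \emph{strict} monotonicity of the effective-throughput function $\effarrival$ of \prettyref{lem:effektive-throuput-with-lost-customrers}. Since \prettyref{thm:theorem-ls} already produces at least one $\arrivalLS\in(0,\infty)$ with $\effarrival(\arrivalLS)=\arrival$, it suffices to show that $\arrivalLS\mapsto\effarrival(\arrivalLS)$ is strictly increasing on $(0,\infty)$; then the level set $\{\effarrival=\arrival\}$ is a singleton. I would work with the closed form used in the proof of \prettyref{thm:theorem-ls}: set $x:=\etaRatio_0/\arrivalLS$, keep $\abrevC{n_0}=\normB(\nodeSet,\poolSize-n_0)$, and introduce the polynomial $D(x):=\sum_{n_0=0}^{\poolSize}\abrevC{n_0}x^{n_0}=\normLS(\nodeSetNull,\poolSize)$. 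Then $\effarrival=\arrivalLS\,(1-\resourceProb(0))$ with $\resourceProb(0)=\abrevC 0/D(x)$, and $p_{n_0}:=\abrevC{n_0}x^{n_0}/D(x)$ is exactly the stationary distribution of the number of resources in the pool, so the mean occupancy is $M:=xD'(x)/D(x)$.

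The next step is to differentiate. Using $dx/d\arrivalLS=-x/\arrivalLS$ and $d\resourceProb(0)/dx=-\resourceProb(0)\,D'(x)/D(x)$, a short chain-rule computation gives $\arrivalLS\,d\resourceProb(0)/d\arrivalLS=\resourceProb(0)\,M$, hence
\[
\frac{d\effarrival}{d\arrivalLS}=1-\resourceProb(0)\,(1+M)=\frac{D(x)^{2}-\abrevC 0\,\bigl(D(x)+xD'(x)\bigr)}{D(x)^{2}}.
\]
So everything reduces to showing that the numerator $P(x):=D(x)^{2}-\abrevC 0\,(D(x)+xD'(x))$ is strictly positive for every $x>0$. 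I would analyse $P$ coefficientwise: since $D(x)+xD'(x)=\sum_{n_0=0}^{\poolSize}(1+n_0)\abrevC{n_0}x^{n_0}$, the coefficient of $x^{m}$ in $\abrevC 0(D+xD')$ is $(1+m)\abrevC 0\abrevC m$ for $m\le\poolSize$ and $0$ for $m>\poolSize$, while the coefficient of $x^{m}$ in $D(x)^{2}$ is $\sum_{k}\abrevC k\abrevC{m-k}$ over admissible indices.

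The structural input is log-concavity of $(\abrevC{n_0})$. Because $\abrevC{n_0}=\normB(\nodeSet,\poolSize-n_0)$, \prettyref{prop:monoton-for-more-1} together with \eqref{eq:stable-th0-by-thi} (non-decreasing service rates make $\stabilityNetworkTh 0(\cdot)=\etaRatio_0\,\normB(\nodeSet,\cdot-1)/\normB(\nodeSet,\cdot)$ non-decreasing) yields $\normB(\nodeSet,m)^{2}\ge\normB(\nodeSet,m-1)\normB(\nodeSet,m+1)$, i.e.\ $\abrevC{n_0}^{2}\ge\abrevC{n_0-1}\abrevC{n_0+1}$ after reflecting the index. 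Log-concavity makes the ratios $\abrevC{n_0+1}/\abrevC{n_0}$ non-increasing, whence the index-spreading bound $\abrevC k\abrevC{m-k}\ge\abrevC 0\abrevC m$ for all $0\le k\le m$ (pairing $\rho_i$ with $\rho_{m-k+i}$ in the telescoping product of ratios). Consequently, for $m\le\poolSize$ the $x^{m}$-coefficient of $D^{2}$ is $\ge(m+1)\abrevC 0\abrevC m$, so those coefficients of $P$ are $\ge0$; for $\poolSize<m\le2\poolSize$ the coefficient of $P$ equals that of $D^{2}$, which is strictly positive. Hence $P(x)>0$ for all $x>0$, $\effarrival$ is strictly increasing, and uniqueness follows.

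I expect the genuine obstacle to be \emph{strictness}, not positivity. The pointwise coefficient inequalities for $m\le\poolSize$ can degenerate to equalities — indeed, a purely geometric weight $\abrevC{n_0}\propto\rho^{n_0}$ gives $\resourceProb(0)(1+M)=1$ exactly in the untruncated case, so a naive log-concavity bound only delivers $\tfrac{d\effarrival}{d\arrivalLS}\ge0$. What rescues strict monotonicity is the finiteness of the pool, $\poolSize<\infty$: the top-degree coefficients $x^{m}$ with $\poolSize<m\le2\poolSize$ of $D^{2}$ have no counterpart in $\abrevC 0(D+xD')$ and contribute the strictly positive term (e.g.\ $\abrevC{\poolSize}^{2}>0$, using $\poolSize\ge1$). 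Locating the strictness in these high-order terms, and justifying the index-spreading inequality cleanly, are the two points that need care.
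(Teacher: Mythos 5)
Your proof is correct, but it takes a genuinely different route from the paper's. The paper also establishes uniqueness via strict monotonicity of $\effarrival$, but it does so by induction on $\poolSize$: it writes the difference $\effarrival^{(\poolSize)}(\arrivalLS+\varepsilon)-\effarrival^{(\poolSize)}(\arrivalLS)$ using the recursion $C_{\arrivalLS}(\nodeSetNull,L)=C(\nodeSet,L)+\tfrac{\etaRatio_{0}}{\arrivalLS}C_{\arrivalLS}(\nodeSetNull,L-1)$, isolates an induction term $A_{\poolSize}$, and disposes of the remaining terms by invoking three stochastic-comparison results for Gordon--Newell networks (Van der Wal's population monotonicity, Daduna's lemma on deleting a node from a cycle, and Shanthikumar--Yao's monotonicity of throughput in service rates). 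You instead differentiate the closed form directly, obtaining $\tfrac{d\effarrival}{d\arrivalLS}=1-\resourceProb(0)\,(1+M)=\bigl(D(x)^{2}-\abrevC 0(D(x)+xD'(x))\bigr)/D(x)^{2}$ with $x=\etaRatio_{0}/\arrivalLS$, and reduce everything to coefficientwise positivity of one polynomial, whose only structural input is log-concavity of $\poolSize\mapsto\normB(\nodeSet,\poolSize)$ -- which the paper already supplies via \prettyref{prop:monoton-for-more-1} and \eqref{eq:stable-th0-by-thi}. I checked the details: the chain-rule identity $\arrivalLS\,d\resourceProb(0)/d\arrivalLS=\resourceProb(0)M$ is right, the index-spreading bound $\abrevC k\abrevC{m-k}\geq\abrevC 0\abrevC m$ does follow from the non-increasing ratios, and your diagnosis of where strictness lives is accurate (the coefficients with $m\leq\poolSize$ can all vanish, as in the geometric case, and the uncompensated top coefficients $\poolSize<m\leq2\poolSize$, e.g.\ $\abrevC{\poolSize}^{2}x^{2\poolSize}>0$, carry the strict inequality). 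What your approach buys is economy and self-containedness: one external lemma instead of three, no induction, and a derivative formula $1-\resourceProb(0)(1+\mathbb{E}[Y_{0}])$ with a transparent probabilistic meaning ($M$ being the mean pool occupancy). What the paper's approach buys is that it never differentiates and stays entirely within throughput-monotonicity language, which matches the comparison results it cites elsewhere; it also exhibits the mechanism of the inequality node by node rather than hiding it in polynomial coefficients. Either argument is a complete proof of the proposition.
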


The proof of \prettyref{prop:unique} is presented in \prettyref{appx:omitted-calculation}.

Interested readers will find the explicit results for adjusted $\arrivalLS$
in the special cases $\poolSize=1$ and $\poolSize=2$ in \prettyref{rem:lambda-eff-N-1-N-2}
in \prettyref{appx:omitted-calculation}.

\paragraph*{Some arguments for applying the resource network of the SOQN-LC as
approximation for that of the SOQN-BO.}

The result of \prettyref{thm:theorem-ls} only guarantees that for
a stable SOQN-BO with a prescribed external arrival rate $\arrival$
there exists an SOQN-LC with the same resource network where the resource
pool has the same throughput. Because the SOQN-LC is a standard Gordon-Newell
network, the local throughputs can be computed directly by standard
algorithms. We can compare these local throughputs with those of the
SOQN-BO. Surprisingly, not only the throughputs at the queues of the
resource network are the same by construction. All throughput pairs
in the respective nodes of the resource network are identical, too.
This observation suggests to use the local characteristics of the
queues in the resource network of the SOQN-LC as approximation for
the respective performance measures of the SOQN-BO. The point is:
The performance characteristics of the SOQN-BO are not directly accessible,
while the performance characteristics of the SOQN-LC are explicitly
known from product-form network theory, and even more, well-established
algorithmic procedures are at hand to evaluate these.

In the next section, we prove the coincidence of the respective local
throughputs. Thereafter, we show that in nodes with constant service
rates even the probabilities for an empty queue in both resource networks
are pairwise identical. 

\subsection{Throughputs and idle times\label{sect:TH_BO}}
\begin{prop}
\label{prop:THL-LC}The local throughput $\ThLC j$ at nodes $j\in\nodeSetNull$
in the SOQN-LC with adjusted arrival rate is pairwise the same as
that of the respective nodes in the SOQN-BO given in \prettyref{prop:THL-BO}.
With
\begin{equation}
\normLS(\nodeSetNull,N)=\sum_{n_{0}=0}^{N}\left(\frac{\etaRatio_{0}}{\arrivalLS}\right)^{n_{0}}\sum_{\sum_{j\in\nodeSet}n_{j}=N-n_{0}}\prod_{j=1}^{J}\left(\prod_{\ell=1}^{n_{j}}\frac{\etaRatio_{j}}{\nodeServiceRate_{j}(\ell)}\right)\label{eq:NormingConst}
\end{equation}
it holds

\[
\ThLC j=\etaRatio_{j}\cdot\frac{\normLS(\nodeSetNull,N-1)}{\normLS(\nodeSetNull,N)}=\arrival\cdot\frac{\etaRatio_{j}}{\etaRatio_{0}},\quad j\in\nodeSetNull.
\]
\end{prop}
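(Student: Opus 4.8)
The plan is to prove the two claims of \prettyref{prop:THL-LC} in order: first the closed-form expression $\ThLC j = \etaRatio_j \cdot \normLS(\nodeSetNull,N-1)/\normLS(\nodeSetNull,N)$, then the identity $\etaRatio_j \cdot \normLS(\nodeSetNull,N-1)/\normLS(\nodeSetNull,N) = \arrival \cdot \etaRatio_j/\etaRatio_0$, which together with \prettyref{prop:THL-BO} yields the pairwise coincidence. Since the SOQN-LC is formally a closed Gordon-Newell network on the resource network (node set $\nodeSetNull$, $\poolSize$ customers, service rates $\nodeServiceRate_j(\cdot)$ for $j\in\nodeSet$, and node $0$ treated as the resource pool), the throughput formula is exactly the standard Gordon-Newell throughput result already invoked in the proof of \prettyref{prop:stability-condition} via \citet[p. 374, (8.14)]{Bolch:1998:QNM:289350}. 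First I would write the local throughput at node $j$ as $\ThLC j = \sum_{\nodeQueueALL \in \lostStateSpace} \piLS(\nodeQueueALL) \cdot \nodeServiceRate_j(\nodeQueue_j) \cdot 1_{\{\nodeQueue_j > 0\}}$ for $j \in \nodeSet$, substitute the product-form $\piLS$ from \prettyref{eq:BO-GEN-APROX-stationary-distribution-1}, and show by the standard telescoping/index-shift argument (peel off one factor $\etaRatio_j/\nodeServiceRate_j(\nodeQueue_j)$ and reindex the state sum to one over $\poolSize-1$ customers) that this collapses to $\etaRatio_j \cdot \normLS(\nodeSetNull,N-1)/\normLS(\nodeSetNull,N)$.

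The node-$0$ case requires separate care, since node $0$ has ``zero service time'' in the stability-network sense but in the SOQN-LC it is an ordinary pool fed at effective rate $\effarrival(\arrivalLS)$; here I would compute $\ThLC 0$ as the rate at which resources leave the pool, namely $\arrivalLS$ times the probability the pool is nonempty, i.e.\ $\arrivalLS(1-\resourceProb(0)) = \effarrival(\arrivalLS)$, which matches the Gordon-Newell formula with the convention $\etaRatio_0$ as given, using \prettyref{eq:BO-GEN-LS-theta-null-1}. Alternatively, and more cleanly, the factor $(\etaRatio_0/\arrivalLS)^{\nodeQueue_0}$ in $\piLS$ plays the role of the ``service contribution'' of node $0$ with effective service rate $\arrivalLS$, so the same telescoping argument applies uniformly once one reads $\nodeServiceRate_0 := \arrivalLS$.

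For the second equality, I would invoke \prettyref{thm:theorem-ls}: the adjusted arrival rate $\arrivalLS$ is chosen precisely so that $\effarrival(\arrivalLS) = \arrival$. Since $\ThLC 0 = \effarrival(\arrivalLS) = \arrival$ by construction, and since the throughput vector $\ThLC{}$ satisfies the same traffic equation $\ThLC{} = \ThLC{} \cdot \routingMatrix$ as in \prettyref{prop:THL-BO} (flow balance in steady state forces $\ThLC j = \sum_{i \in \nodeSetNull} \ThLC i \cdot \routingProb(i,j)$), it follows that $\ThLC j$ is proportional to $\etaRatio_j$ with the same normalising constant $\arrival/\etaRatio_0$ fixed by the node-$0$ value. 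This immediately gives $\ThLC j = \arrival \cdot \etaRatio_j/\etaRatio_0 = \ThBO j$, the pairwise identity.

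The main obstacle I anticipate is the bookkeeping at node $0$: reconciling the Gordon-Newell closed-form $\etaRatio_j \cdot \normLS(\nodeSetNull,N-1)/\normLS(\nodeSetNull,N)$ — which is symmetric in all nodes of $\nodeSetNull$ — with the asymmetric role of the pool, whose ``service rate'' is the externally imposed $\arrivalLS$ rather than an internal $\nodeServiceRate_0$. One must verify that substituting $\nodeServiceRate_0 = \arrivalLS$ into the generic formula reproduces exactly $\ThLC 0 = \arrivalLS(1-\resourceProb(0))$ and that the traffic solution $\etaRatioVect$ of $\routingMatrix$ is consistent across both the resource-network reading and the original routing. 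Once this identification is pinned down, everything else is the routine telescoping of the Gordon-Newell normalisation constants and a direct appeal to the intermediate-value construction of $\arrivalLS$ in \prettyref{thm:theorem-ls}.
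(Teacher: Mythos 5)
Your proposal is correct and takes essentially the same route as the paper: the paper likewise identifies $\ThLC 0=\effarrival(\arrivalLS)=\etaRatio_{0}\cdot\normLS(\nodeSetNull,N-1)/\normLS(\nodeSetNull,N)$ (computed in the proof of \prettyref{thm:theorem-ls}), uses the adjustment $\effarrival(\arrivalLS)=\arrival$ to fix the ratio of normalisation constants, and then appeals to the standard Gordon--Newell local-throughput formula for the remaining nodes. Your extra care at node $0$ (reading $\nodeServiceRate_{0}:=\arrivalLS$ so the product form and telescoping apply uniformly) is exactly the identification the paper relies on implicitly when it calls the resource network a standard Gordon--Newell network.
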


\begin{proof}
It was shown in the proof of \prettyref{thm:theorem-ls} that it holds
\[
\ThLC 0=\effarrival(\arrivalLS)=\eta_{0}\cdot\frac{\normLS(\nodeSetNull,N-1)}{\normLS(\nodeSetNull,N)}
\]
which is the standard formula for the throughput at node $0$ in the
resource network, which is a standard Gordon-Newell network. Therefore,
\[
\arrival=\etaRatio_{0}\cdot\frac{\normLS(\nodeSetNull,N-1)}{\normLS(\nodeSetNull,N)}
\]
and by using the standard formula for local throughputs in Gordon-Newell
networks follows
\[
\ThBO j=\arrival\cdot\frac{\etaRatio_{j}}{\etaRatio_{0}}=\etaRatio_{j}\cdot\frac{\normLS(\nodeSetNull,N-1)}{\normLS(\nodeSetNull,N)}=\ThLC j\quad j\in\nodeSetNull.
\]
\end{proof}
The explicit formula for the throughputs in \prettyref{prop:THL-LC}
has a further interesting consequence. It allows us to determine efficiently
the steady-state marginal distribution of the queue length at every
node $j\in\nodeSet$ without knowing the adjusted value $\effarrival(\arrivalLS)$
of $\arrivalLS$. 
\begin{prop}
Let $Y_{\text{LC}}:=(Y_{\text{LC},j}:j\in\nodeSet)$ denote a random
vector which is distributed according to the stationary queue length
at the nodes in $\nodeSet$ of the SOQN-LC with adjusted arrival rate.

If the service rate at node $j$ does not depend on the queue length,
i.e. $\nu_{j}(\cdot)=\nu_{j},j\in\nodeSet$, then the probabilities
that the nodes $j\in\nodeSetNull$ in the SOQN-LC with adjusted arrival
rate are idling are pairwise the same as those of the respective nodes
in the SOQN-BO given in \prettyref{cor:idle-times-BO}:
\[
P(Y_{\text{LC},j}=0)=1-\arrival\cdot\frac{\etaRatio_{j}}{\etaRatio_{0}}\cdot\nodeServiceRate_{j}^{-1}.
\]
\end{prop}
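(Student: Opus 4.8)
The plan is to reuse the argument of \prettyref{cor:idle-times-BO} almost verbatim, replacing the backordering throughput by the lost-customers throughput and invoking \prettyref{prop:THL-LC} to identify the two. First I would fix an inner node $j\in\nodeSet$ with constant service rate $\nodeServiceRate_{j}$ and apply Little's formula to the server at that node in the SOQN-LC in steady state (which exists since $\lostStateSpace$ is finite). Writing $B_{j}$ for the mean number of customers in service, $S_{j}=\nodeServiceRate_{j}^{-1}$ for the mean service time, and $\lambda_{j}$ for the arrival intensity at node $j$, Little's formula yields $B_{j}=\lambda_{j}\cdot S_{j}$.

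Next I would identify $\lambda_{j}$ with the local throughput. In steady state the arrival rate at node $j$ equals its throughput, and by \prettyref{prop:THL-LC} the throughput of node $j$ in the SOQN-LC with adjusted arrival rate is exactly $\ThLC j=\arrival\cdot\frac{\etaRatio_{j}}{\etaRatio_{0}}$. Hence $\lambda_{j}=\arrival\cdot\frac{\etaRatio_{j}}{\etaRatio_{0}}$, and therefore
\[
B_{j}=\arrival\cdot\frac{\etaRatio_{j}}{\etaRatio_{0}}\cdot\nodeServiceRate_{j}^{-1}.
\]
Since node $j$ is a single server operating at constant rate, the number of customers in service equals the indicator $1_{\left\{ Y_{\text{LC},j}>0\right\} }$, so that $B_{j}=E\left[1_{\left\{ Y_{\text{LC},j}>0\right\} }\right]=P(Y_{\text{LC},j}>0)$. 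Consequently
\[
P(Y_{\text{LC},j}=0)=1-P(Y_{\text{LC},j}>0)=1-B_{j}=1-\arrival\cdot\frac{\etaRatio_{j}}{\etaRatio_{0}}\cdot\nodeServiceRate_{j}^{-1},
\]
which coincides with the expression obtained for the SOQN-BO in \prettyref{cor:idle-times-BO}, establishing the claimed pairwise equality.

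I do not expect a genuine obstacle here: the only point requiring care is the identification of the arrival intensity at node $j$ with the local throughput $\ThLC j$, which is justified by steady-state flow balance together with \prettyref{prop:THL-LC}; once this is granted, the computation is identical to that of \prettyref{cor:idle-times-BO}. An alternative, more computational route would be to sum the product-form distribution \prettyref{eq:BO-GEN-APROX-stationary-distribution-1} over all states with $\nodeQueue_{j}=0$ and simplify the resulting ratio of normalisation constants via \prettyref{prop:THL-LC}. This works but is longer and less transparent than the Little's-law argument, so I would present the latter and merely remark that the two agree.
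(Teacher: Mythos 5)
Your proof is correct and follows the paper's own route exactly: the paper's proof simply invokes Proposition~\ref{prop:THL-LC} to equate the throughputs and then states that the rest is identical to the Little's-law computation in Corollary~\ref{cor:idle-times-BO}, which is precisely the argument you spell out. The alternative product-form summation you mention is a valid but unnecessary detour, as you correctly judge.
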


\begin{proof}
According to \prettyref{prop:THL-LC}, the throughputs are equal.
The rest of the proof is the same as the proof of \prettyref{cor:idle-times-BO}.
\end{proof}

\section{Approximation of the external queue\label{sec:Approximation-of-the-external-queue}}

Although, after adjusting $\arrivalLS$, the behaviour of the resources
in both systems -- the original SOQN-BO and the {\similar} SOQN-LC
-- is very similar, their external queues are still very different.
No matter how highly we increase $\arrivalLS$, the external queue
length of the SOQN-LC is always zero, while the external queue of
the SOQN-BO has strictly positive mean. Therefore, we cannot use the
{\similar} system directly to estimate the external queue of the
original system. Instead, in the following two sections, we will use
a two-step approach to approximate the external queue:
\begin{description}
\item [{step~1}] In \prettyref{sec:small-modified}, we will reduce the
modified system to a simple system {\withLostCustomers}.
\item [{step~2}] In \prettyref{sub:small-with-backordering}, we will
combine the results from \prettyref{sec:small-modified} and the results
from \prettyref{sec:Special-case} for a simple system with backordering
to approximate the external queue.
\end{description}

\subsection[Reduced SOQN-LC]{Reduced SOQN {\withLostCustomers}\label{sec:small-modified}}

Because the SOQN-LC from \prettyref{sec:Lost-network} is a Gordon-Newell
network, we can reduce complexity further by applying Norton's theorem
proved by \citet{chandy;herzog;woo75:} to construct a two-node Gordon-Newell
with the same throughput.

\begin{figure}[h]
\includegraphics[width=1\textwidth]{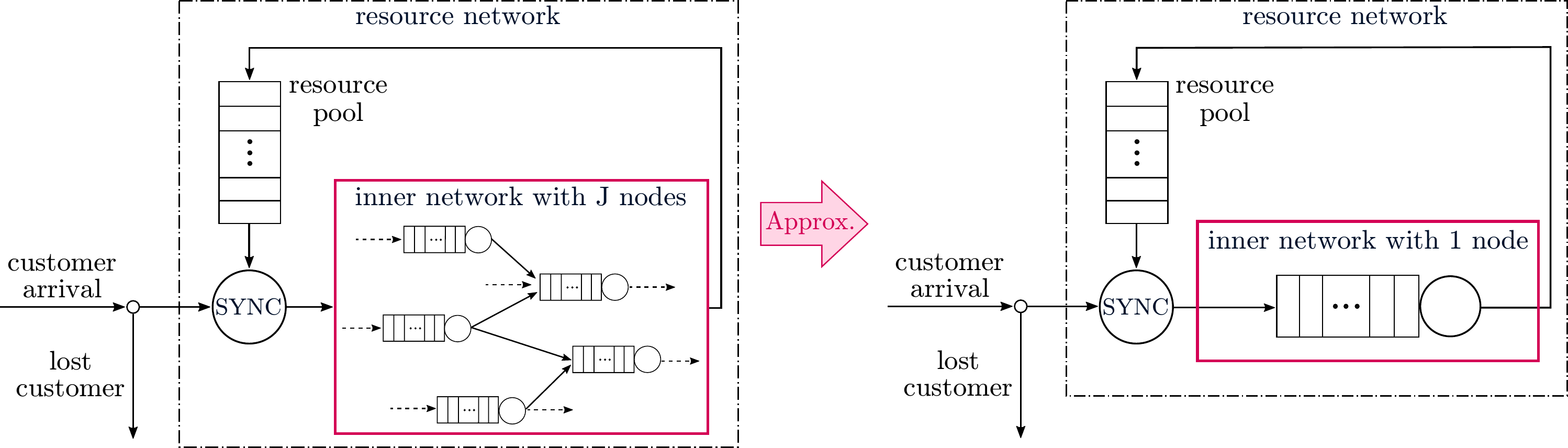}

\caption{Step 1: Reduction of complexity\label{fig:Reduction-of-complexity}.}
\end{figure}

The inner network is replaced by only one composite node ($\nodeSet:=\left\{ 1\right\} $),
which consists of a single server with infinite waiting room under
the FCFS regime. The service time is exponentially distributed with
mean $1$. The service speed is determined by a queue-length-dependent
service intensity. According to \citet[p. 39, eq. (20)]{chandy;herzog;woo75:},
the service intensity $\subRate$ is given by
\begin{align}
\subRate(0) & =0,\nonumber \\
\subRate(m) & =\etaRatio_{0}\cdot\frac{\normB(\nodeSet,m-1)}{\normB(\nodeSet,m)},\quad m\in\left\{ 1,\ldots,\poolSize\right\} .\label{eq:small-model-phi}
\end{align}
Remarkably, $\subRate(N)$ is the same as $\maxArrival$ in \eqref{eq:max-arrival-from-norm-constants}.
We deduce from \eqref{eq:stable-th0-by-thi} that
\begin{equation}
\subRate(m)=\stabilityNetworkTh 0(m),\quad m\in\left\{ 1,\ldots,\poolSize\right\} ,\label{eq:small-model-phi-from-th-0}
\end{equation}
and it does not depend on $\arrivalLS$.

The normalisation constants $\normB(\nodeSet,m)$, $m\in\left\{ 0,\ldots,\poolSize\right\} $,
can be calculated by the convolution algorithm or mean value analysis
(MVA). They are illustrated in \citet[p. 371ff., Section 8.1 and p. 384ff., Section 8.2]{Bolch:1998:QNM:289350}.

\subsection{Back to backordering\label{sub:small-with-backordering}}

We can use the result of \prettyref{thm:theorem-ls}, that for every
stable SOQN-BO, there exists an SOQN-LC with adjusted arrival rate
$\arrivalLS$ such that both systems have the same throughput in the
steady state. So we can remove the lost-customer property to get again
the backordering property as shown in \prettyref{fig:Transition-from-lost-to-back}.

\begin{figure}[h]
\includegraphics[width=1\textwidth]{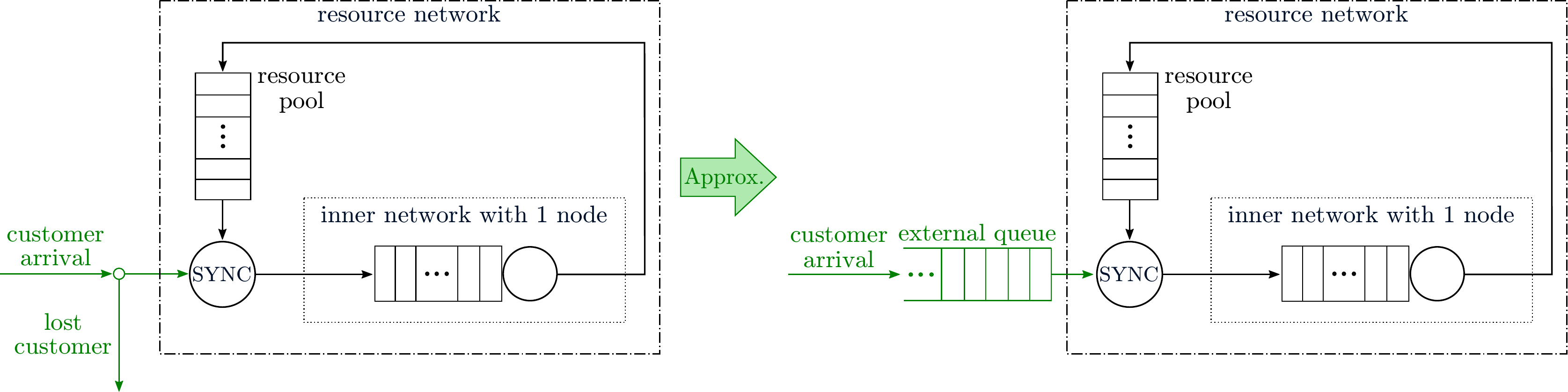}\caption{\label{fig:Transition-from-lost-to-back}Step 2: Transition from reduced
SOQN-LC to reduced SOQN-BO.}
\end{figure}

Having done this, we can eventually  approximate the external queue
of the large SOQN-BO with $\nodes>1$ by a reduced SOQN-BO with $\nodes=1$.
We calculate $\stabilityNetworkTh 0(m)$, $m\in\left\{ 1,\ldots,\poolSize\right\} $,
of the large system  and substitute these throughputs in the reduced
system like in \eqref{eq:small-model-phi-from-th-0}: $\nodeServiceRate_{1}(m):=\subRate(m)=\stabilityNetworkTh 0(m)$.
Then we insert these rates $\nodeServiceRate_{1}(m)$ into the formulas
for $P(\externalQueueProcRV=\externalQueue)$, $\externalLength$
and $\externalWaitingTime$ in \prettyref{prop:marginal-distribution-of-a-simple-system},
which we know to be exact for $\nodes=1$. For $\nodes>1$ we expect
that the results are close to the real values, but we cannot give
error bounds at the present.

\[
P(\externalQueueProcRV=0)\overset{\eqref{eq:eq:small-backordering-p-X-ex-0}}{\text{\ensuremath{\approx}}}\left[\specialNormB(\left\{ 1\right\} ,\poolSize)\right]^{-1}\cdot\sum_{\nodeQueue_{1}=0}^{\robotnumb}\prod_{m=1}^{\nodeQueue_{1}}\frac{\arrival}{\stabilityNetworkTh 0(m)}
\]
and for $\externalQueue>0$
\[
P(\externalQueueProcRV=\externalQueue)\overset{\eqref{eq:small-backordering-p-X-ex-n}}{\text{\ensuremath{\approx}}}\left[\specialNormB(\left\{ 1\right\} ,\poolSize)\right]^{-1}\cdot\prod_{m=1}^{\poolSize}\frac{\arrival}{\stabilityNetworkTh 0(m)}\cdot\left(\frac{\arrival}{\stabilityNetworkTh 0(\poolSize)}\right)^{\externalQueue}
\]
with
\[
\specialNormB(\left\{ 1\right\} ,\poolSize)\overset{\eqref{eq:Modell-all-together-normierungskonstante}}{=}\sum_{\nodeQueue_{1}=0}^{\poolSize-1}\prod_{m=1}^{\nodeQueue_{1}}\frac{\arrival}{\stabilityNetworkTh 0(m)}+\prod_{m=1}^{N}\frac{\arrival}{\stabilityNetworkTh 0(m)}\cdot\frac{1}{1-\frac{\arrival}{\stabilityNetworkTh 0(\poolSize)}}.
\]
We approximate the average number of customers in the external queue
with \eqref{eq:randvtlg-y} and \eqref{eq:small-backordering-L-ex-0}
\begin{equation}
\externalLength\approx\externalLength^{\text{apprx}}:=\left[\specialNormB(\left\{ 1\right\} ,\poolSize)\right]^{-1}\cdot\prod_{m=1}^{\poolSize}\frac{\arrival}{\stabilityNetworkTh 0(m)}\cdot\frac{1}{1-\frac{\arrival}{\stabilityNetworkTh 0(\poolSize)}}\cdot\frac{\arrival}{\stabilityNetworkTh 0(\poolSize)-\arrival}.\label{eq:external-length}
\end{equation}

Note, with our approximation method we arrive to the same formula
for $\externalLength^{\text{apprx}}$ as \citet[eq. (22)]{dallery:1990}
and thus our method produces the same formula as the aggregation technique
of \citet[Section 6]{dallery:1990}.

We approximate the average waiting time of customers in the external
queue with \eqref{eq:waiting-time-ex}
\begin{equation}
\externalWaitingTime\text{\ensuremath{\approx}}\frac{\externalLength^{\text{apprx}}}{\arrival}.\label{eq:waiting-time-ex-approx}
\end{equation}

\section{Application to RMFS\label{sec:application}}

In this section, we use our approximation algorithm to calculate the
minimal number of robots for a robotic mobile fulfilment system (RMFS).
In an RMFS, robots are expensive resources. Therefore, we want to
keep their number small while still maintaining the necessary quality
of service.

We will model an RMFS as SOQN-BO and evaluate its performance analytically
because this is much faster than evaluating a simulation model. Because
of the large state space, it is impractical to solve this SOQN-BO
exactly with matrix-geometric methods, even for a small number of
robots. For example, for 10 robots, we need to calculate ca.~$\left(9\cdot10^{4}\right)^{2}$
entries of a special matrix. That is why we will use our approximation
methods instead, in order to quickly estimate the main performance
metrics.

\subsection{\label{sec:description-of-RMFS}Description of RMFS}

Firstly, we define the components and the order fulfilment processes
in an RMFS together with an illustrated example in \prettyref{fig:rmfs-example}.
The central components are:
\begin{itemize}
\item movable shelves, called \emph{pods}, on which items are stored,
\item \emph{storage area} -- the area where the pods are stored,
\item workstations, where
\begin{itemize}
\item the items are picked from pods by pickers (\emph{picking stations})
or
\item the items are stored to pods (\emph{replenishment stations}),
\end{itemize}
\item mobile \emph{robots}, which can move underneath pods and carry them
to workstations.
\end{itemize}
\prettyref{fig:rmfs-example} illustrates an example of order fulfilment
processes in an RMFS. On the upper left hand we have three customers'
orders. The orders contain different items, which are illustrated
with different colours. To fulfil customers' orders, we send them
or parts of them to picking stations. To the same station we send
pods with all the necessary items. Each pod is carried by a robot.
In this way, customers' orders generate tasks for robots. The robots,
with their pods, queue up in front of the picking stations. A picker
takes all the necessary items from the pod at the head of the queue.
Then he sends it, with its robot, back to the storage area. As soon
as the customer's order or part of it is fulfilled, we remove it from
the picking station. The order in which we send the customers' orders,
how we split them apart, and which pod we send, is a complex topic.
See, for example, \citet{orderpicking}. In the present paper, we
focus on the generated robots' tasks, which we will call just \emph{tasks}.

In this example, each customer's order is split into two parts. Three
parts are sent to picking station~1, and three other parts are sent
to picking station~2. To fulfil these partial orders, a robot transports
one pod to picking station~1, and another robot transports one pod
to picking station~2, from the storage area. 

From time to time, we need to refill the pods. To do this, we send
these pods to the replenishment station. There, employees will refill
these pods and send them back to the storage area.

In this example, after picking, pod~2 is sent to the replenishment
station to refill it with the blue items.

\begin{figure}[h]
\centering{}\includegraphics[width=1\textwidth]{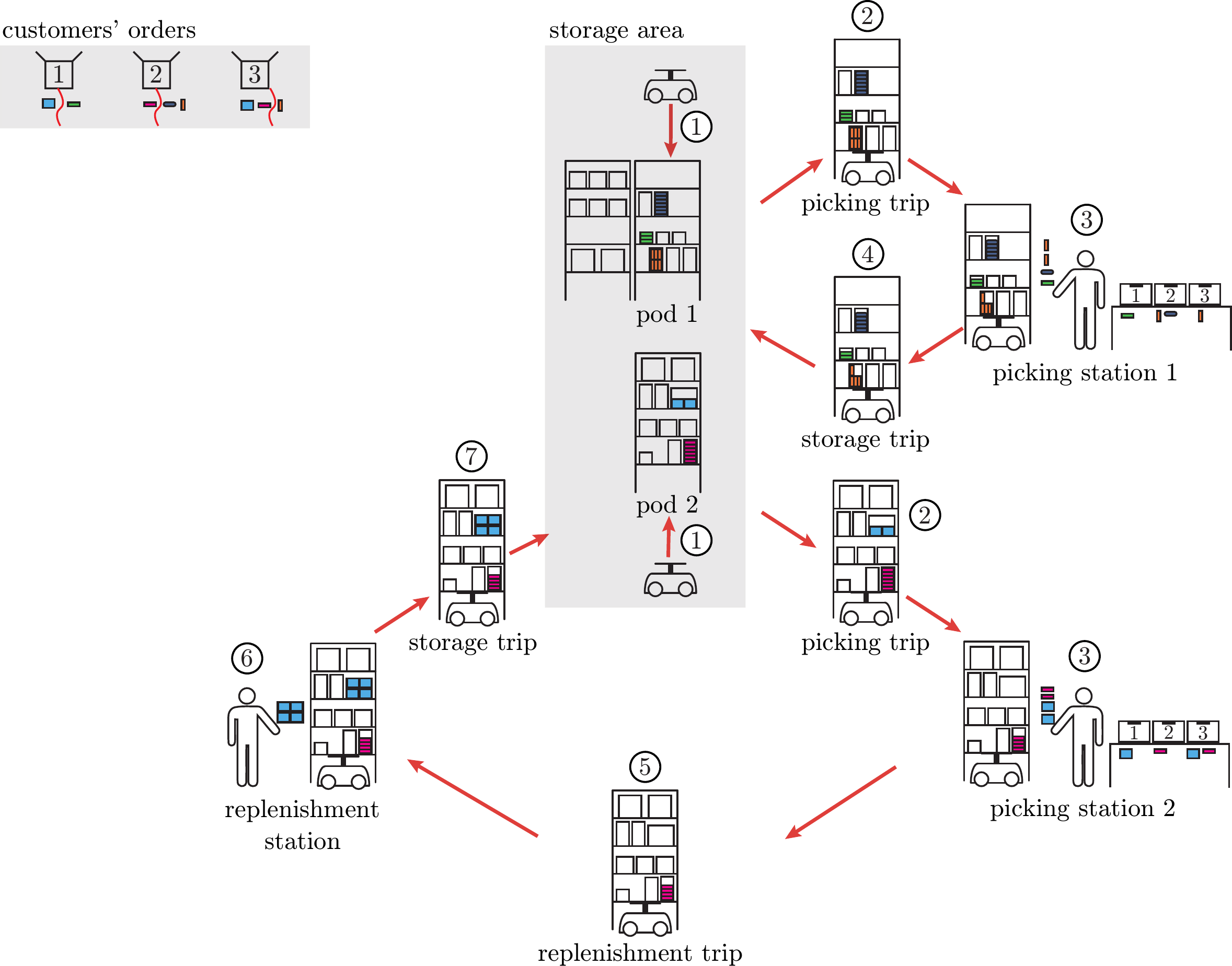}\caption{\label{fig:rmfs-example}Order fulfilment processes in RMFS. The circled
numbers refer to the processes in \prettyref{fig:RMFS-image}.}
\end{figure}

\subsection{Modelling as SOQN}

A robotic mobile fulfilment system can be modelled as an SOQN-BO.
This is depicted in \prettyref{fig:RMFS-image}. An RMFS is open with
respect to tasks and closed with respect to robots, which are the
resources in this model.

In this section, we consider only an RMFS with two picking stations
and one replenishment station, but the results from Sections \ref{sec:A-general-semi-open}
and \ref{sec:Approximation} about general SOQN can also be applied
to an RMFS with more than two picking stations and more than one replenishment
station.

\begin{figure}[h]
\includegraphics[width=1\textwidth]{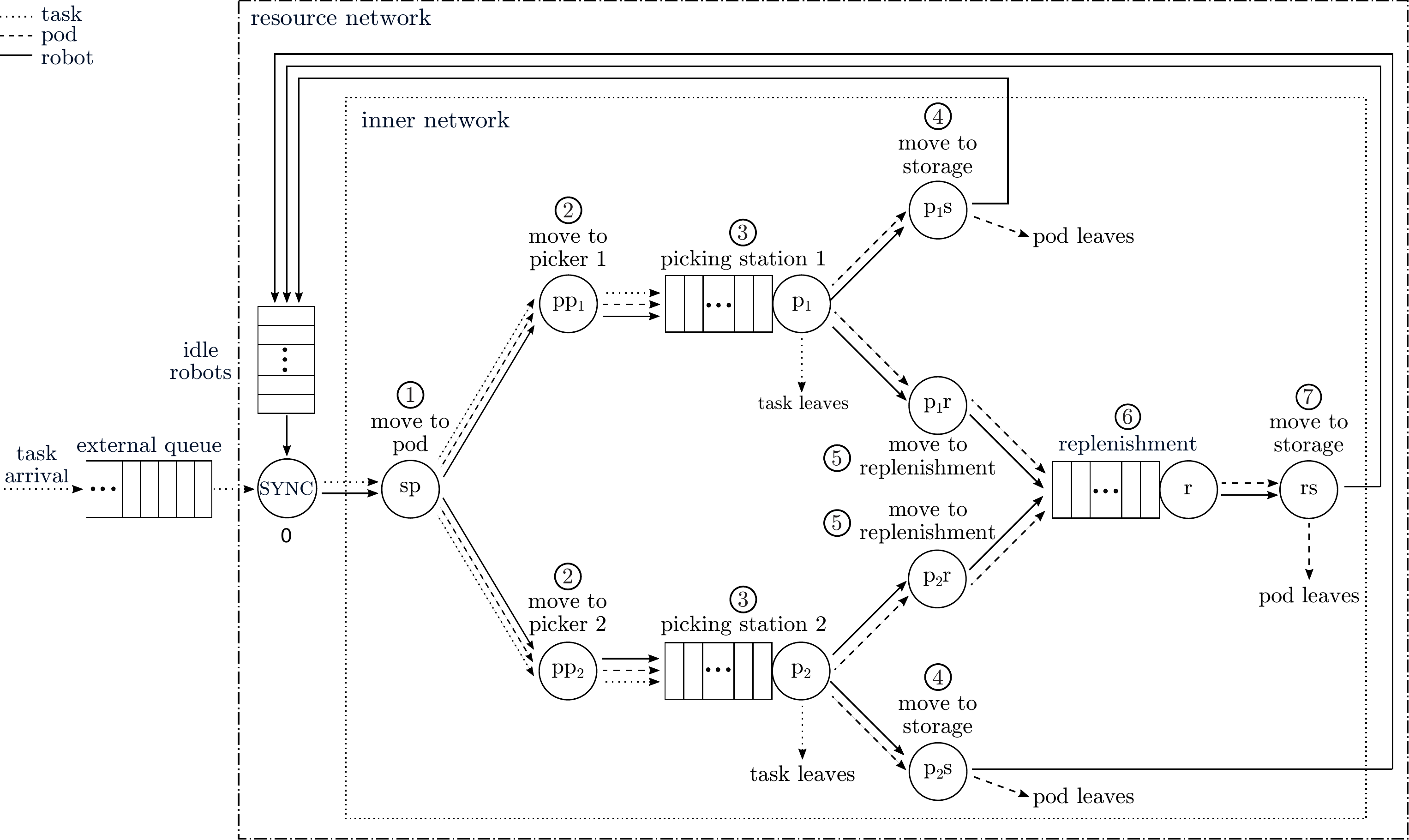}

\caption{\label{fig:RMFS-image}RMFS modelled as an SOQN-BO. The circled numbers
refer to the processes in \prettyref{fig:rmfs-example}.}
\end{figure}

Customers' orders arrive at the RMFS one by one with rate $\customerOrdersArrival$
and generate tasks. The number of tasks, that a single customer order
can generate depends on many parameters. In particular, it depends
on the efficiency of the algorithm which tries to find an optimal
match between customers' orders and pods. The matching problem is
NP-hard, and, to the best of our knowledge, there are no formulas
known which determine how many pods an order will require. Therefore,
we assume that there exists some average pod/order ratio $\podToOrderRatio$
which we can find empirically for a particular RMFS. We assume that
this ratio depends only on the pods' contents and customers' order
contents, and it does not depend on the number of robots.

The matching algorithm also adds some delay in order to assign pods
to orders. We assume that this delay depends only on the pods' contents,
customers' order contents and order input rate and it does not depend
on the number of robots. We assume that we can find this delay empirically
for our particular RMFS and it is on average $\matchingDelay$.

Thus, the customers' orders generate a stream of ``bring a pod to
a picking station'' tasks. This stream has the rate $\arrival=\customerOrdersArrival\cdot\podToOrderRatio>0$.
The delay, introduced by the matching algorithm, does not change this
rate.

We simplify all the complexity that occurs until each  task is created
and model the task stream as a Poisson arrival stream with rate $\arrival=\customerOrdersArrival\cdot\podToOrderRatio$.
To be processed ($=$ to enter the inner network), each such task
requires exactly one idle robot from the robot pool (resource pool),
which is henceforth referred to as node $0$. If there is no idle
robot available, the new task has to wait in an external queue until
a robot becomes available. The maximal number of robots in the resource
pool is $\robotnumb$. The inner network in the example in \prettyref{fig:RMFS-image}
consists of 11 nodes, denoted by
\[
\nodeSet:=\left\{ \toPodState,\toPickOneState,\toPickTwoState,\pickerQueueOneState,\pickerQueueTwoState,\toStorageOneState,\toStorageTwoState,\toReplOneState,\toReplTwoState,\replQueueState,\toReplToStorageState\right\} .
\]
The notations of the nodes are presented in Table \ref{tab:RMFS-nodes-overview}
\vpageref{tab:RMFS-nodes-overview}.

The robot with assigned task moves through the network.

The following processes occur from the perspective of a robot:
\begin{itemize}
\item The idle robot waits to be assigned to a task (bring a particular
pod).
\item The robot moves with the assigned task to a pod.
\item With this pod, the robot moves to one of the picking stations, more
precisely, with probability $\transprobPickOne\in\left(0,1\right)$
to picking station $1$ and with probability $\transprobPickTwo\in\left(0,1\right)$
to picking station $2$, whereby $\transprobPickOne+\transprobPickTwo=1$.
\item The robot queues with the pod at the picking stations.
\item After picking at picking station $1$ resp.~picking station 2, the
robot:
\begin{itemize}
\item either
\begin{itemize}
\item carries the pod directly back to the storage area with probability
$\transprobpickReplOne\in\left(0,1\right)$ resp.~$\transprobpickReplTwo\in\left(0,1\right)$,
or
\item moves to the replenishment station with probability $\transprobReplOne\in\left(0,1\right)$
resp.~$\transprobReplTwo\in\left(0,1\right)$, whereby $\transprobpickReplOne+\transprobReplOne=1$
resp.~$\transprobpickReplTwo+\transprobReplTwo=1$,
\end{itemize}
\item queues at the replenishment station and
\item carries the pod back to the storage area and waits for the next task.
\end{itemize}
\end{itemize}
Each of these processes is modelled as a queue.

All the movements of the robots are modelled by processor-sharing
nodes with exponentially distributed service times. Their intensities
$\nu_{j}(n_{j}):=\mu_{j}\cdot\phi_{j}(n_{j})$, $j\in\nodeSet\setminus\left\{ \pickerQueueOneState,\pickerQueueTwoState,\replQueueState\right\} $,
are presented in Table \ref{tab:RMFS-nodes-overview}.

The two picking stations and the replenishment station, which are
referred to as node $\pickerQueueOneState$, node $\pickerQueueTwoState$
resp.~node $\replQueueState$, consist of a single server with waiting
room under the FCFS regime. The picking times and the replenishment
times are exponentially distributed with rates $\pickingOne$, $\pickingTwo$
resp.~$\repl$.

\begin{table}[h]
\caption{\label{tab:RMFS-nodes-overview}Overview of the nodes in the network}

\begin{tabular}{ccccc}
\hline 
\multirow{2}{*}{\textbf{Node}} & \textbf{Service} & \textbf{Random} & \multirow{2}{*}{\textbf{State}} & \textbf{Description}\tabularnewline
 & \textbf{intensity} & \textbf{variable} &  & \textbf{(number of robots at time $t\geq0$)}\tabularnewline
\hline 
\multirow{2}{*}{$\toPodState$} & \multirow{2}{*}{$\transToPod\cdot\phiToPod(\toPod)$} & \multirow{2}{*}{$\procToPod(t)$} & \multirow{2}{*}{$\toPod$} & \multirow{2}{*}{moving in the storage area to a pod}\tabularnewline
 &  &  &  & \tabularnewline
\hline 
\multirow{2}{*}{$\toPickOneState$} & \multirow{2}{*}{$\transToPickerOne\cdot\phiToPickOne(\toPickOne)$} & \multirow{2}{*}{$\procToPickOne(t)$} & \multirow{2}{*}{$\toPickOne$} & moving a pod from the storage area\tabularnewline
 &  &  &  & to picking station $1$\tabularnewline
\hline 
\multirow{2}{*}{$\toPickTwoState$} & \multirow{2}{*}{$\transToPickerTwo\cdot\phiToPickTwo(\toPickTwo)$} & \multirow{2}{*}{$\procToPickTwo(t)$} & \multirow{2}{*}{$\toPickTwo$} & moving a pod from the storage area\tabularnewline
 &  &  &  & to picking station $2$\tabularnewline
\hline 
\multirow{2}{*}{$\pickerQueueOneState$} & \multirow{2}{*}{$\pickingOne$} & \multirow{2}{*}{$\procPickerQueueOne(t)$} & \multirow{2}{*}{$\pickerQueueOne$} & \multirow{2}{*}{in the queue of picking station $1$}\tabularnewline
 &  &  &  & \tabularnewline
\hline 
\multirow{2}{*}{$\pickerQueueTwoState$} & \multirow{2}{*}{$\pickingTwo$} & \multirow{2}{*}{$\procPickerQueueTwo(t)$} & \multirow{2}{*}{$\pickerQueueTwo$} & \multirow{2}{*}{in the queue of picking station $2$}\tabularnewline
 &  &  &  & \tabularnewline
\hline 
\multirow{2}{*}{$\toStorageOneState$} & \multirow{2}{*}{$\transToStorageOne\cdot\phiToStorageOne(\toStorageOne)$} & \multirow{2}{*}{$\procToStorageOne(t)$} & \multirow{2}{*}{$\toStorageOne$} & moving a pod from picking station $1$\tabularnewline
 &  &  &  & to the storage area and entering node $0$\tabularnewline
\hline 
$\toStorageTwoState$ & $\transToStorageTwo\cdot\phiToStorageTwo(\toStorageTwo)$ & $\procToStorageTwo(t)$ & $\toStorageTwo$ & moving a pod from picking station $2$\tabularnewline
 &  &  &  & to the storage area and entering node $0$\tabularnewline
\hline 
\multirow{2}{*}{$\toReplOneState$} & \multirow{2}{*}{$\transToReplOne\cdot\phiToReplOne(\toReplOne)$} & \multirow{2}{*}{$\procToReplOne(t)$} & \multirow{2}{*}{$\toReplOne$} & moving a pod from picking station $1$\tabularnewline
 &  &  &  & to the replenishment station\tabularnewline
\hline 
\multirow{2}{*}{$\toReplTwoState$} & \multirow{2}{*}{$\transToReplTwo\cdot\phiToReplTwo(\toReplTwo)$} & \multirow{2}{*}{$\procToReplTwo(t)$} & \multirow{2}{*}{$\toReplTwo$} & moving a pod from picking station $2$\tabularnewline
 &  &  &  & to the replenishment station\tabularnewline
\hline 
\multirow{2}{*}{$\replQueueState$} & \multirow{2}{*}{$\repl$} & \multirow{2}{*}{$\procRepl(t)$} & \multirow{2}{*}{$\replQueue$} & \multirow{2}{*}{in the queue of the replenishment station}\tabularnewline
 &  &  &  & \tabularnewline
\hline 
\multirow{2}{*}{$\toReplToStorageState$} & \multirow{2}{*}{$\transReplToStorage\cdot\phiToStorage(\toReplToStorage)$} & \multirow{2}{*}{$\procReplToStorage(t)$} & \multirow{2}{*}{$\toReplToStorage$} & moving a pod from the replenishment station\tabularnewline
 &  &  &  & to the storage area and entering node $0$\tabularnewline
\hline 
\end{tabular}
\end{table}

The robots travel among the nodes following a fixed routing matrix
$\routingMatrix:=\left(\routingProb(i,j):i,j\in\nodeSetNull\right)$,
whereby $\nodeSetNull:=\left\{ 0\right\} \cup\nodeSet$, which is
given by
\[
\routingMatrix=\left(\begin{array}{c|cccccccccccc}
 & \freerobiState & \toPodState & \toPickOneState & \toPickTwoState & \pickerQueueOneState & \pickerQueueTwoState & \toStorageOneState & \toStorageTwoState & \toReplOneState & \toReplTwoState & \replQueueState & \toReplToStorageState\\
\hline \freerobiState &  & 1\\
\toPodState &  &  & \transprobPickOne & \transprobPickTwo\\
\toPickOneState &  &  &  &  & 1\\
\toPickTwoState &  &  &  &  &  & 1\\
\pickerQueueOneState &  &  &  &  &  &  & \transprobpickReplOne &  & \transprobReplOne\\
\pickerQueueTwoState &  &  &  &  &  &  &  & \transprobpickReplTwo &  & \transprobReplTwo\\
\toStorageOneState & 1\\
\toStorageTwoState & 1\\
\toReplOneState &  &  &  &  &  &  &  &  &  &  & 1\\
\toReplTwoState &  &  &  &  &  &  &  &  &  &  & 1\\
\replQueueState &  &  &  &  &  &  &  &  &  &  &  & 1\\
\toReplToStorageState & 1
\end{array}\right).
\]

The routing matrix $\routingMatrix$ is irreducible by construction.

We define the joint stochastic process $Z$ of this system by
\begin{align*}
Z & :=\Big(\Big(\procbackordering(t),\procfreerobi(t),\procToPod(t),\procToPickOne(t),\procToPickTwo(t),\procPickerQueueOne(t),\procPickerQueueTwo(t),\procToStorageOne(t),\procToStorageTwo(t),\\
 & \phantomeq\quad\ \ \procToReplOne(t),\procToReplTwo(t),\procRepl(t),\procReplToStorage(t)\Big):t\geq0\Big).
\end{align*}

Due to the usual independence and memorylessness assumptions (see
the assumptions \vpageref{independence-memorylessness}), $Z$ is
a homogeneous Markov process with state space
\begin{align*}
E & :=\phantom{\cup}\big\{\left(0,\freerobi,\toPod,\toPickOne,\toPickTwo,\pickerQueueOne,\pickerQueueTwo,\toStorageOne,\toStorageTwo,\toReplOne,\toReplTwo,\replQueue,\toReplToStorage\right):\\
 & \phantomeq\phantom{\cup\big\{\:}n_{j}\in\left\{ 0,\ldots,\robotnumb\right\} \ \forall j\in\nodeSetNull,\ \sum_{j\in\nodeSetNull}n_{j}=\robotnumb\big\}\\
 & \phantomeq\cup\big\{\left(\backordering,0,\toPod,\toPickOne,\toPickTwo,\pickerQueueOne,\pickerQueueTwo,\toStorageOne,\toStorageTwo,\toReplOne,\toReplTwo,\replQueue,\toReplToStorage\right):\\
 & \phantomeq\phantom{\cup\big\{\:}\backordering\in\mathbb{N},\:n_{j}\in\left\{ 0,\ldots,\robotnumb\right\} \ \forall j\in\nodeSet,\ \sum_{j\in\nodeSet}n_{j}=\robotnumb\big\}.
\end{align*}
$Z$ is irreducible on $E$.

\subsection{Determine the minimal number of robots}

We see that the throughput $\stabilityNetworkTh 0(\poolSize)$ depends
on $\poolSize$, the number of robots. This raises the question: ``How
many robots do we need to stabilize a given system?'' Consequently,
to find the minimal number of robots, we first check the stability
criterion from \prettyref{prop:stability-condition}.
The maximal number of robots $N^{\max}$ is equal to the number of
pods or is determined by financial restrictions. In the following
\prettyref{alg:algorithm-stable}, we determine the set $\overline{N}^{*}$of
feasible numbers of robots for a stable system.
\begin{algorithm}
\begin{algorithmic}[1]
\Function{StableRobotsSet}{}
\State $\overline{N}^{*}=\{N\in \{1,\ldots,N^{\max}\}:\lambda_{\text{BO}}<TH^{\text{stb}}_{0}(N)\}$
 \State \Return $\overline{N}^{*}$
\EndFunction
\end{algorithmic}\caption{\label{alg:algorithm-stable}Calculate set of feasible numbers of
robots for a stable system}
\end{algorithm}

\begin{rem}
If $\stabilityNetworkTh 0(\cdot)$ is non-decreasing in $\mathbb{N}$,
then the algorithm can be improved: the algorithm does not have to
check all possible numbers of robots . It starts with one robot and
adds a new robot in each new step until the stability criterion is
satisfied for the first time. You will find some simple sufficient
conditions for non-decreasing $\stabilityNetworkTh 0(\cdot)=\maxArrival$
in \prettyref{prop:monoton-for-more-1} and \ref{prop:monoton-for-1}.
\end{rem}

Unfortunately, stability does not say anything about the turnover
time of an order. We do not know if only 2 minutes are required to
satisfy a customer\textquoteright s order or maybe the order requires
2 years to be processed. In case of the latter turnover time, the
system is stable, but customers may not be happy.

Now, in addition to stability, we want to consider the turnover time
of a customer's order for the quality of service.

The turnover time of a customer's order can be split into three main
parts:
\begin{enumerate}
\item Waiting time until the matching algorithm has assigned all required
pods to that order. By assumption, this time does not depend on the
number of robots and is on average $\matchingDelay>0$.
\item Waiting time of the first matched pod for an idle robot, time for
transport to the picking station, waiting time for the picker at the
picking station. During all these times, the order is coupled with
at least one  task. We call this turnover time for the task $\TOtask(\arrivalLS,\poolSize)$.
\item Time of an order between start of picking and its completion. This
time is complex and depends on many factors. Like, for example: How
many orders can a picker complete with the same pod? Will all completed
orders wait until a pod leaves? Is the order's content in multiple
pods? Will these pods arrive right after each other, or will there
be many pods for other orders in between? Is there any complex merging
procedure outside of the picking station? In our model, we use a simplifying
assumption that the order needs on average $\assembledTime>0$ from
the time its first pod arrives at the picking station until the time
picking for this order is completed.
\end{enumerate}
With all these assumptions, we can assume that the turnover time of
an order is 
\[
TO_{\text{order}}(\arrivalLS,\poolSize):=\matchingDelay+\TOtask(\arrivalLS,\poolSize)+\assembledTime.
\]
Even in the case where $\matchingDelay$ and $\assembledTime$ are
not known, we can still use $\TOtask(\arrivalLS,\poolSize)$ as a
lower bound for $TO_{\text{order}}(\arrivalLS,\poolSize)$.

Because of the simplifying assumption about $\matchingDelay$ and
$\assembledTime$, only the turnover time $\TOtask(\arrivalLS,\poolSize)$
of a task depends on $\poolSize$, and for the minimal number of robots
we can focus on this.

The turnover time $\TOtask(\arrivalLS,\poolSize)$ of a task is measured
from the time the task is received to the time the picker starts to
process it:
\[
\TOtask(\arrivalLS,\poolSize):=\externalWaitingTime(\poolSize)+\innerWaitingTime(\arrivalLS,\poolSize).
\]
$\externalWaitingTime(\poolSize)$ is the average time which a task
spends waiting in the external queue until it enters the inner network.
We can calculate it with \eqref{eq:waiting-time-ex-approx}.

$\innerWaitingTime(\arrivalLS,\poolSize)$ is the average time which
a task spends in the inner network until a picker starts to process
it at one of the picking stations. Given the average waiting times
$W_{j}(\arrivalLS,\poolSize)$ at nodes $j\in\nodeSet=\left\{ \toPodState,\toPickOneState,\toPickTwoState,\pickerQueueOneState,\pickerQueueTwoState,\toStorageOneState,\toStorageTwoState,\toReplOneState,\toReplTwoState,\replQueueState,\toReplToStorageState\right\} $
from arrival until service completion, and constant service rates
$\nu_{j}$ at nodes $j\in\{\pickerQueueOneState,\pickerQueueTwoState$\},
then
\begin{align*}
\innerWaitingTime(\arrivalLS,\poolSize) & :=W_{\toPodState}(\arrivalLS,\poolSize)+\routingProb(\toPodState,\toPickOneState)\cdot\left(W_{\toPickOneState}(\arrivalLS,\poolSize)+W_{\pickerQueueOneState}(\arrivalLS,\poolSize)-1/\pickingOne\right)\\
 & \phantomeq+\routingProb(\toPodState,\toPickTwoState)\cdot\left(W_{\toPickTwoState}(\arrivalLS,\poolSize)+W_{\pickerQueueTwoState}(\arrivalLS,\poolSize)-1/\pickingTwo\right).
\end{align*}
We calculate $W_{j}(\arrivalLS,\poolSize)$, $j\in\nodeSet$, with
MVA.

So, the question is: \textquotedblleft How many additional robots
do we need, so that the turnover time of a task is also acceptable?\textquotedblright{}
In the following \prettyref{alg:algorithm-minimal-to}, we determine
the minimum number of robots for an acceptable turnover time of a
task. We will call this time $TO_{task}^{max}$.

\begin{algorithm}[h]
\begin{algorithmic}[1]
\Function{MinimalRobots}{$\overline{N}^{*}$, $TO^{\max}_{\text{task}}$}
\While{$\overline{N}^{*} \neq \{\}$}
	\State $N\gets\min{(\overline{N}^{*})}$
	\State calculate $\lambda_{LC}$ with $\lambda_{\text{eff}}(\lambda_{\text{LC}})=\arrival$
	\If{$TO_{\text{task}}(\lambda_{LC}, N)\leq TO^{\max}_{\text{task}}$}
		\State \Return $N$
	\Else
		\State $\overline{N}^{*} \gets \overline{N}^{*}\setminus \{N\}$
	\EndIf
\EndWhile
\State \Return ``no solution''
\EndFunction
\end{algorithmic}

\caption{\label{alg:algorithm-minimal-to}Calculate the minimal number of robots
for acceptable turnover time of a task}
\end{algorithm}

\subsection{Numerical experiments\label{sec:numerical-experiments}}

We use parameters from \citet[Table 5.3 and Table 5.4]{dynamicPolicies}
in our experiments. We set the number of pods $\robotnumb^{\max}=550$,
arrival rate of tasks\footnote{Note, in \citet[Table 5.3 and Table 5.4]{dynamicPolicies}, the arrival
rates are in {[}order/hour{]}, but because in that paper each order
generates one task, we use {[}task/hour{]} directly.} $468\:\frac{\text{tasks}}{\text{h}}=0.13\:\frac{\text{tasks}}{\text{s}}$,
average travel time at node $\toPodState$ $\transToPod^{-1}=\SI{18.4}{\second}$,
average travel time at node $\toPickOneState$ $\transToPickerOne^{-1}=\SI{34.5}{\second}$,
average travel time at node $\toPickTwoState$ $\transToPickerTwo^{-1}=\SI{34.5}{\second}$,
average pick time of picking station $1$ $\pickingOne^{-1}=\SI{10}{\second}$,
average pick time of picking station $2$ $\pickingTwo^{-1}=\SI{10}{\second}$,
average travel time at node $\toStorageOneState$ $\transToStorageOne^{-1}=\SI{34.5}{\second}$,
average travel time at node $\toStorageTwoState$ $\transToStorageTwo^{-1}=\SI{34.5}{\second}$,
average travel time at node $\toReplTwoState$ $\transToReplTwo^{-1}=\SI{34.5}{\second}$,
average travel time at node $\toReplOneState$ $\transToReplOne^{-1}=\SI{34.5}{\second}$,
average replenishment time (node $\replQueueState$) $\repl^{-1}=\SI{30}{\second}$,
and average travel time at node $\toReplToStorageState$ $\transReplToStorage^{-1}=\SI{34.5}{\second}$.

For our numerical example, we assume that the robots do not interfere
when they move. Hence, our processor-sharing queues are infinite server
queues. That means $\phi_{j}(n_{j})=n_{j}$ for all $j\in\nodeSet\setminus\left\{ \pickerQueueOneState,\pickerQueueTwoState,\replQueueState\right\} $.

We implemented our algorithm in R and used the package\emph{ queueing},
see \citet{Rqueueing}. The minimal number of robots for this system
to be stable is 18. The minimal number of robots with some additional
waiting time requirements depends on this waiting time. In the worst
case scenario -- when we need to try all ($550-18+1$) of the robots
-- our implementation takes on average 83 seconds on a notebook with
an i7-7600U CPU processor, 2.80GHz and 16GB RAM. We plotted some important
parameters of the system in Figures \ref{fig:stable-arrivals} to
\ref{fig:waiting-times}. For better readability, we plotted data
for a limited number of robots; due to the asymptotic behaviour, one
can estimate how these parameters look with more robots.

\prettyref{fig:stable-arrivals} shows maximal arrival rates $\arrival$
for given numbers of robots to keep the system stable. We see asymptotic
behaviour of the rate $\arrival$ for $\robotnumb\rightarrow\infty$
. In particular, after about 40 robots, additional robots do not allow
significantly higher arrival rates.

\prettyref{fig:throughputs} shows the throughputs for each node.
In \prettyref{prop:THL-BO}, we showed that these throughputs do not
depend on the number of robots, and they are pairwise the same for
the original system with backordering and for the adjusted lost-customers
approximation.

The probabilities that the nodes $\pickerQueueOneState$, $\pickerQueueTwoState$
and $\replQueueState$ are idling are 0.35, 0.35, and 0.22. We calculate
them with \prettyref{cor:idle-times-BO}.

\prettyref{fig:adjusted-arrivals} shows the adjusted arrival rate
$\arrivalLS$ for a system with lost customers, so that the effective
arrival rate is $\arrival$. We see an asymptotic behaviour $\lim_{\robotnumb\rightarrow\infty}\arrivalLS\approx\arrival$.
This happens because, in our test system with lost customers, when
there are many robots in the system, the probability of an empty resource
pool is almost $0$, therefore only a few customers are lost. When
only a few customers are lost, we do not need to adjust $\arrivalLS$
much.

\prettyref{fig:waiting-times} shows average waiting times for an
order, after it has arrived into the system and until it is completed
at a picking station. We see that an order spends a lot of time in
a system with only 18 robots, even if this system is stable. We also
see how dramatically the waiting time improves with only one additional
robot.

\begin{figure}[h]
\centering{}\includegraphics{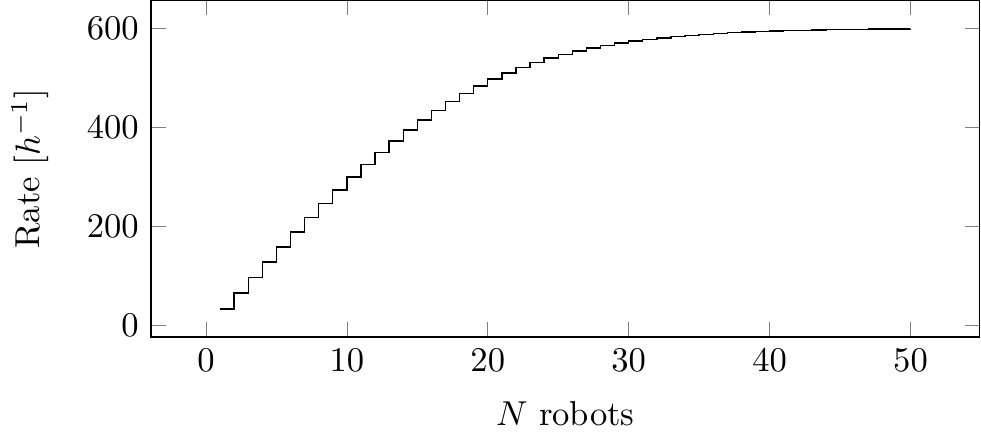}\caption{Maximal arrival rates $\protect\arrival$ for given numbers of robots
to keep the system stable.\label{fig:stable-arrivals}}
\end{figure}

\begin{figure}[h]
\centering{}\includegraphics{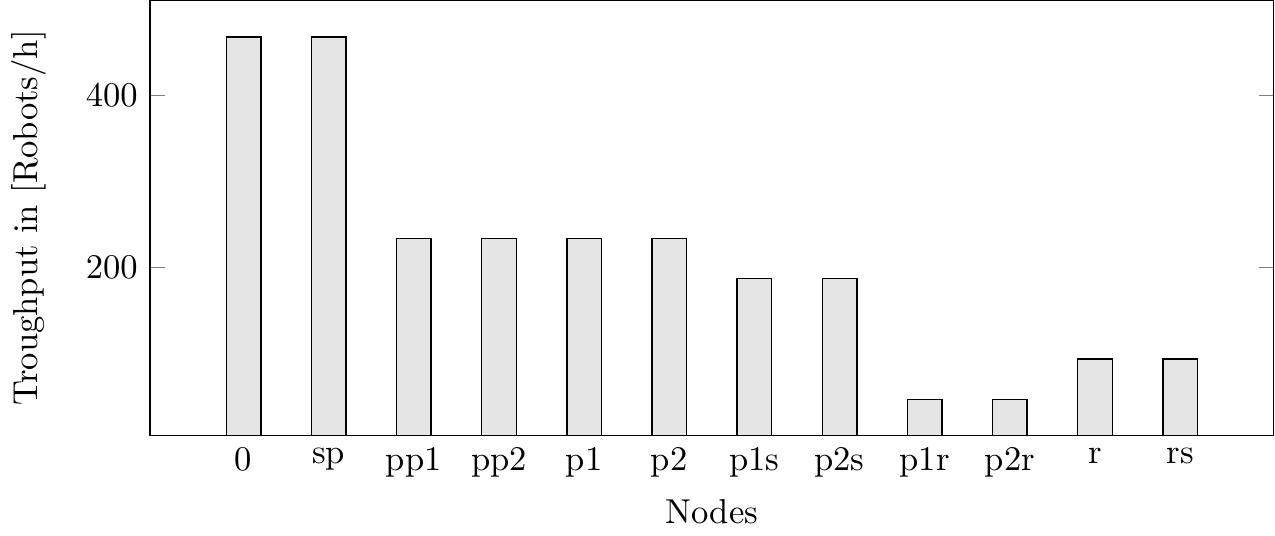}\caption{Throughputs for each node of the RMFS example.\label{fig:throughputs}}
\end{figure}

\begin{figure}[H]
\centering{}\includegraphics{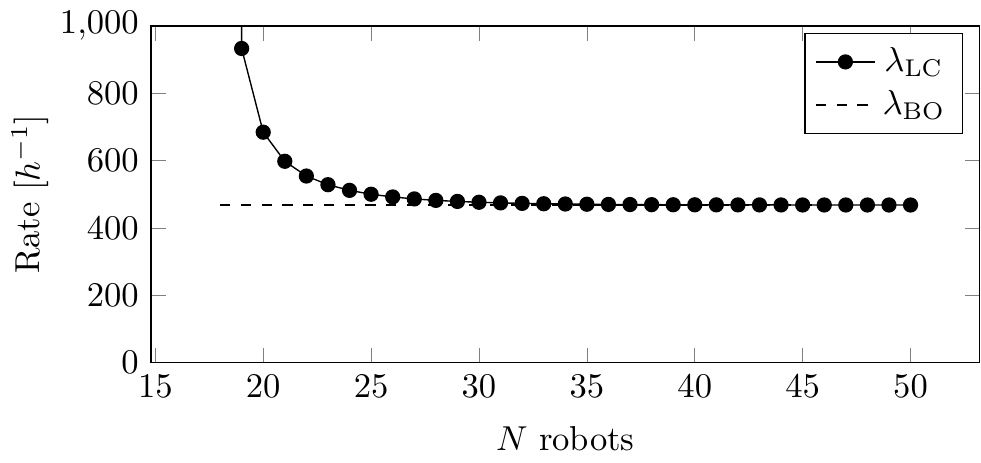}\caption{Adjusted arrival rate $\protect\arrivalLS$ for a system with lost
customers such that the effective arrival rate is $\protect\arrival$.\label{fig:adjusted-arrivals}}
\end{figure}

\begin{figure}[H]
\centering{}\includegraphics{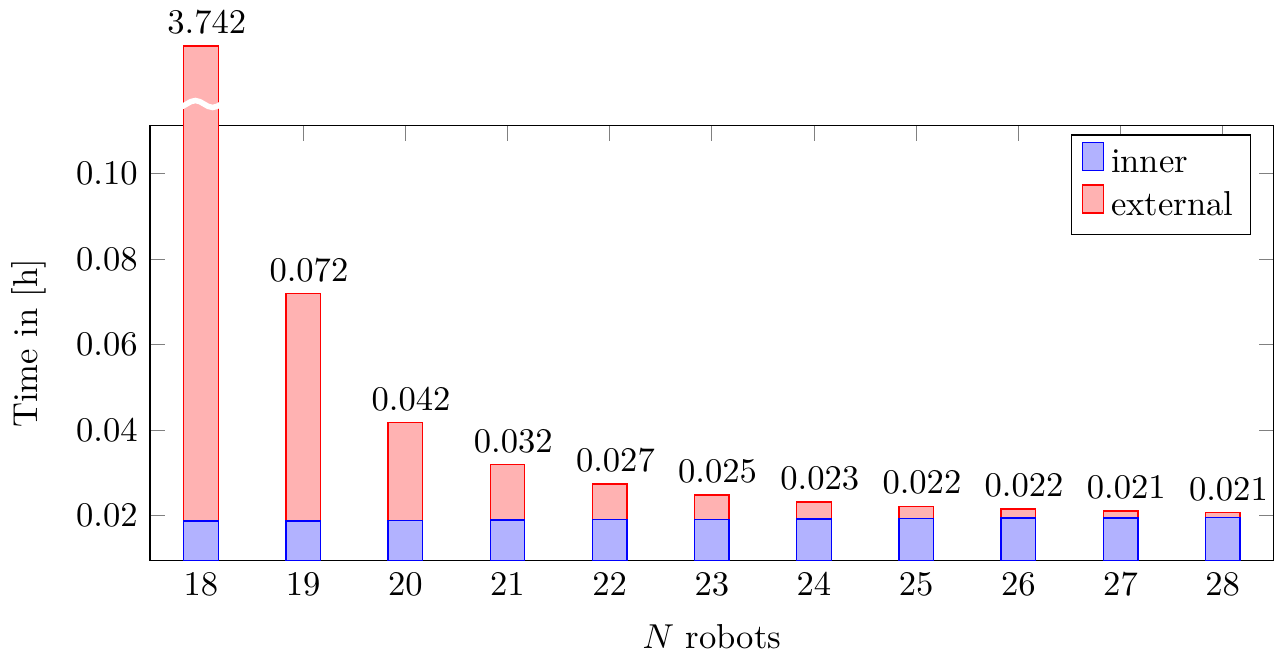}\caption{Turnover times of a task $\protect\TOtask(\protect\arrivalLS,\protect\poolSize)$,
which are the average delay times of a task until it is completed.\label{fig:waiting-times}}
\end{figure}

\paragraph{Simulation}

We simulated the RMFS with backordering for 365 days 20 times for
each number of robots. For simulation we used SimPy 3.0. \prettyref{fig:simulation-external-waiting-times}
shows results for waiting times beginning with 19 robots. The approximation
shows the same qualitative behaviour as the original system with backordering.
In the interesting region 19-25 robots, the approximation is not very
precise but it answers the essential question ``how many robots we
need'{}' quite well. Beginning with 26 robots, the approximation
reflects the asymptotic behaviour of the original system well.

In \prettyref{fig:simulation-external-waiting-times} we have omitted
the results for 18 robots because they have very large mean value
and very large standard deviation. This is because the system with
18 robots operates on the edge of instability. This behaviour of the
system under these settings is interesting from theoretical point
of view, that is why we ran more intensive tests of this system with
200 simulations. The results are in \prettyref{fig:simulation-external-18-histogram}.
They show how different the average waiting time can be. From practical
point of view, we do not recommend to operate a real system under
these conditions. Also in this case, we recommend not to trust simulation
results if they were obtained by only few simulations.

\prettyref{fig:simulation-turn-over-times} shows that our approximation
approximates very well the turnover times. To better judge the quality
of the approximation, we need to consider that the turnover times
consist of transportation times and waiting times for a picker. The
average transportation times are pairwise equal in the original system
and in the approximation. We can easily calculate them from service
times on appropriate nodes without any approximation: $\transToPod^{-1}+\routingProb(\toPodState,\toPickOneState)\cdot\transToPickerOne^{-1}+\routingProb(\toPodState,\toPickTwoState)\cdot\transToPickerOne^{-1}$
$=0.0147\text{h}$. The hard part is to estimate the average waiting
times for the picker. \prettyref{fig:simulation-waiting-for-picker}
shows the results. Our approximation is still good.

Other waiting times, which we cannot calculate directly, are the waiting
times for the replenishment, which are shown in \prettyref{fig:simulation-waiting-for-replenisher}.
We do not need these times for our optimisation problem. However,
it demonstrates how well our algorithm estimates other parts of the
network.

Although we are happy with these results, we remark that this approximation
worked well for our test system but systems with less impressive results
are possible, too.

\begin{figure}[H]
\centering{}\includegraphics{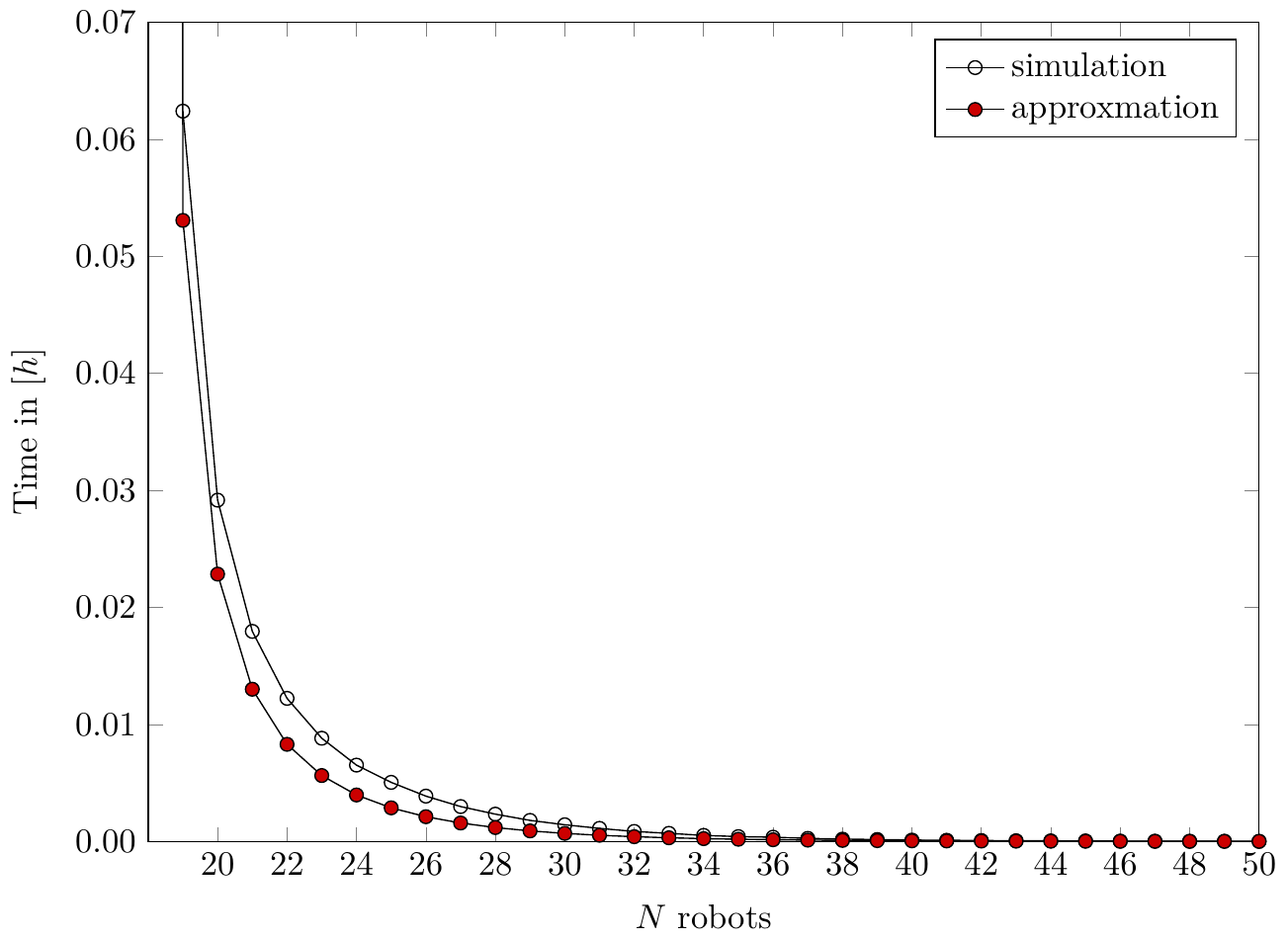}\caption{Average waiting times in the external queue for different number of
robots, simulation vs. approximation. \label{fig:simulation-external-waiting-times}}
\end{figure}

\begin{figure}[H]
\centering{}\includegraphics[width=0.65\textwidth]{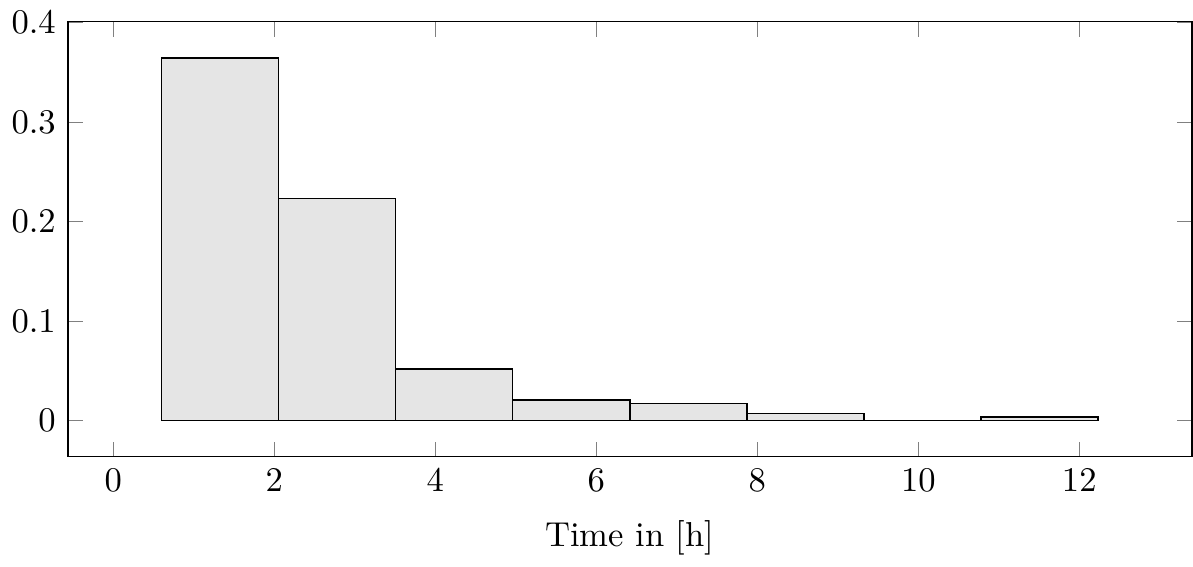}\caption{Distribution of waiting times in the external queue for a system with
18 robots, obtained with 200 simulations. \label{fig:simulation-external-18-histogram}}
\end{figure}

\begin{figure}[H]
\centering{}\includegraphics{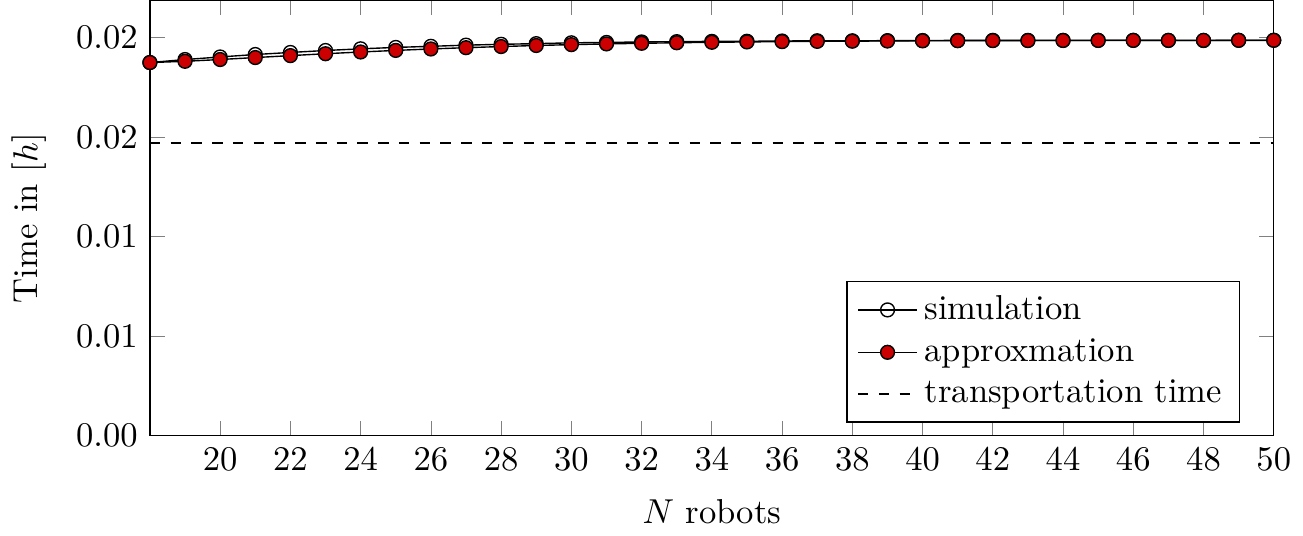}\caption{Average turnover times for different number of robots, simulation
vs. approximation. The graph for approximation is right on the graph
for simulation. The large fraction of turnover times for the transportation
is the same in both systems. It is equal in both systems by construction.
\label{fig:simulation-turn-over-times}}
\end{figure}

\begin{figure}[H]
\centering{}\includegraphics{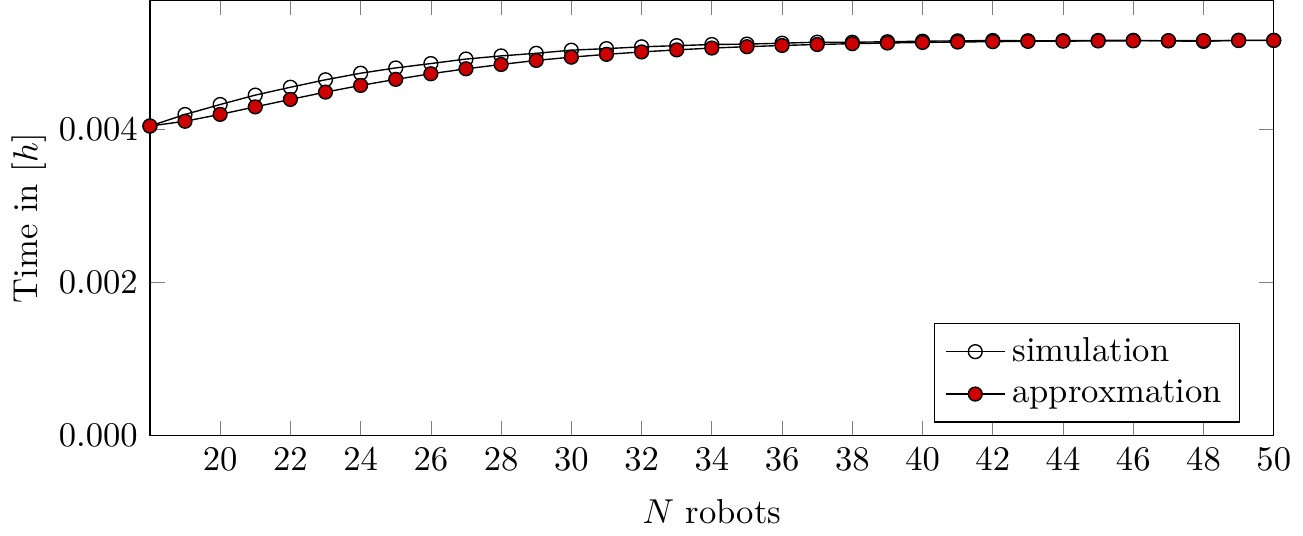}\caption{Waiting times for pickers with different number of robots, simulation
vs. approximation. \label{fig:simulation-waiting-for-picker}}
\end{figure}

\enlargethispage{1cm}
\begin{figure}[H]
\centering{}\includegraphics{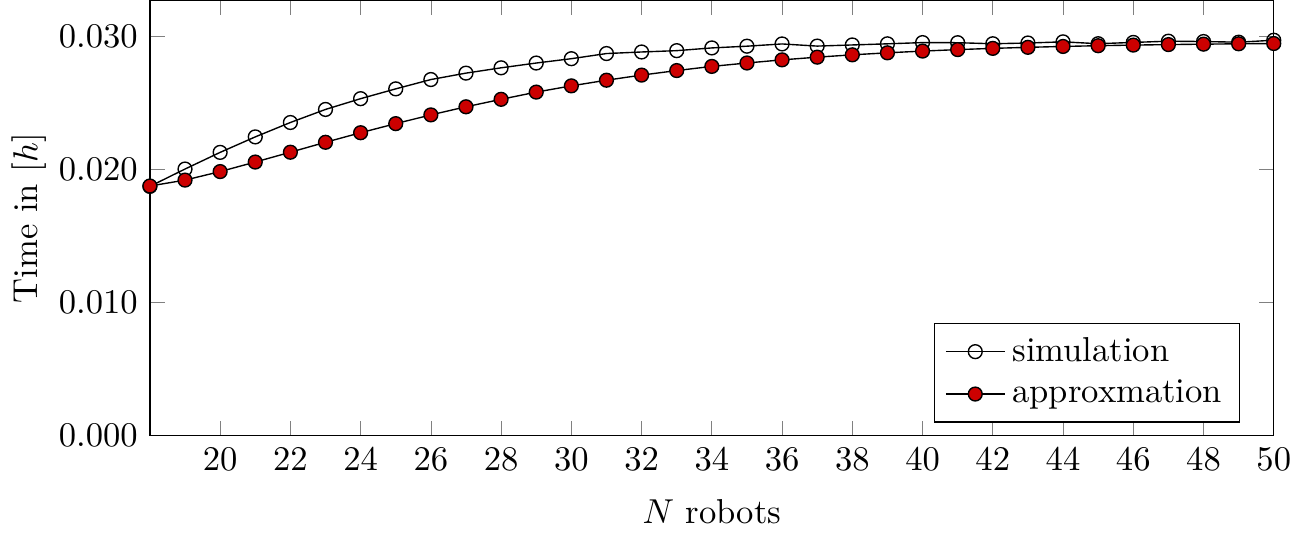}\caption{Waiting times until replenishment begins with different number of
robots, simulation vs. approximation. \label{fig:simulation-waiting-for-replenisher}}
\end{figure}

\section{Conclusion\label{sec:Conclusion}}

In this paper, we have focused on semi-open queueing networks (SOQNs),
which include an external queue for customers and a resource network
which consists of an inner network and a resource pool. We have determined
closed-form expressions for stability, throughputs and idle probabilities
of some nodes. For other performance metrics, we have proposed a new
approximation approach to solve problems of an SOQN with backordering. 

To approximate the resource network of the SOQN with backordering,
in \prettyref{sec:Approximation} we have considered a modification,
where newly arriving customers will decide not to join the external
queue and are lost if the resource pool is empty (``{\LostCustomers}'').
We have proved that we can adjust the arrival rate so that the throughputs
in each node are pairwise identical to those in the original network.
We also have proved that the idle probabilities of the nodes with
constant service rate are pairwise identical.

To approximate the external queue of the SOQN with backordering, in
\prettyref{sec:Approximation-of-the-external-queue} we have used
a two-step approach. In step one, we have constructed a reduced SOQN
with lost customers, where the inner network consists only of one
node, by using Norton's theorem. In step two, we have used the closed-form
solution of this reduced SOQN, to estimate the performance of the
original SOQN with backordering.

Based on the theoretical foundation above, we have modelled a real-world
automatic warehousing system (called robotic mobile fulfilment system,
in short: RMFS) as an SOQN with backordering. We have selected for
our experiment an RMFS with two picking stations and one replenishment
station. Based on the stability analysis in \prettyref{sec:RMFS-general-stat-distr},
we got the minimum number of robots for a stable system. However,
the stability does not say anything about the turnover time of an
order ($=$ waiting time in the external queue + processing time in
the inner network), since customers' orders are expected to be processed
as quickly as possible in the e-commerce sector. Therefore, we have
calculated the processing time in the network based on the approximation
model in \prettyref{sec:Approximation} and the waiting time in the
external queue based on the approximation model in \prettyref{sec:Approximation-of-the-external-queue}.
Due to the short computational time for trying different numbers of
robots, we got all the important metrics within a couple of seconds.

We have plotted the data to see the relationship between the number
of required robots and the average waiting time of a task. Based on
our results, we have seen the dramatic reduction in waiting time in
the external queue with only one more robot, while we have seen the
stagnation of average waiting time by adding more robots. Therefore,
it provides an interesting insight for practitioners and researchers
to make a trade-off between low investment cost and good service for
customers. We made a simulation to analyse the quality of our approximation
method. For our test system, it shows good results.

\section*{Acknowledgments}

Ruslan Krenzler and Sonja Otten are funded by the industrial project
``Robotic Mobile Fulfillment System'', which is financially supported
by Ecopti GmbH (Paderborn, Germany) and Beijing Hanning Tech Co.,
Ltd.~(Beijing, China).

\appendix
\begin{appendices} 
	
\section{Omitted proofs\label{appx:omitted-calculation}}
\begin{proof}
[Proof of \prettyref{prop:unique}]We will show the strict isotonicity
of the effective arrival rate $\effarrival(\arrivalLS)$ for any $\poolSize\geq1$
by induction in $\poolSize$. \textit{\emph{To distinguish systems
with different numbers of}} resources,\textit{\emph{ we will use the
notation $\effarrival^{(\poolSize)}(\arrivalLS)$ for a system with
}}$\poolSize$\textit{\emph{ }}resources\textit{\emph{.}}

We recall the following constants from \prettyref{eq:BO-GEN-normalize}
for $L=1,2,\ldots,\poolSize$:
\[
\normLS(\nodeSetNull,L)=\sum_{n_{0}=0}^{L}\left(\frac{\etaRatio_{0}}{\arrivalLS}\right)^{\nodeQueue_{0}}\sum_{\sum_{j\in\nodeSet}\nodeQueue_{j}=L-\nodeQueue_{0}}\prod_{j=1}^{\nodes}\left(\prod_{i=1}^{\nodeQueue_{j}}\frac{\etaRatio_{j}}{\nodeServiceRate_{j}(i)}\right).
\]
We set $C_{\arrivalLS}(\nodeSetNull,L):=\normLS(\nodeSetNull,L)$
to emphasise that these constants are functions in $\arrivalLS$.

It holds
\begin{align*}
\effarrival^{(\poolSize)}(\arrivalLS) & =\arrivalLS\cdot\left(1-\frac{\normB(\nodeSet,\poolSize)}{\normLS(\nodeSetNull,\poolSize)}\right)\\
 & =\arrivalLS\cdot\left(1-\frac{\sum_{\sum_{j\in\nodeSet}\nodeQueue_{j}=\poolSize}\prod_{j=1}^{\nodes}\left(\prod_{i=1}^{\nodeQueue_{j}}\frac{\etaRatio_{j}}{\nodeServiceRate_{j}(i)}\right)}{\sum_{\sum_{j\in\nodeSetNull}\nodeQueue_{j}=\poolSize}\left(\frac{\etaRatio_{0}}{\arrivalLS}\right)^{\nodeQueue_{0}}\cdot\prod_{j=1}^{\nodes}\left(\prod_{i=1}^{\nodeQueue_{j}}\frac{\etaRatio_{j}}{\nodeServiceRate_{j}(i)}\right)}\right)\\
 & =\etaRatio_{0}\cdot\frac{\sum_{n_{0}=1}^{N}\left(\frac{\etaRatio_{0}}{\arrivalLS}\right)^{\nodeQueue_{0}-1}\sum_{\sum_{j\in\nodeSet}\nodeQueue_{j}=\poolSize-\nodeQueue_{0}}\prod_{j=1}^{\nodes}\left(\prod_{i=1}^{\nodeQueue_{j}}\frac{\etaRatio_{j}}{\nodeServiceRate_{j}(i)}\right)}{\sum_{n_{0}=0}^{N}\left(\frac{\etaRatio_{0}}{\arrivalLS}\right)^{\nodeQueue_{0}}\sum_{\sum_{j\in\nodeSet}\nodeQueue_{j}=\poolSize-\nodeQueue_{0}}\prod_{j=1}^{\nodes}\left(\prod_{i=1}^{\nodeQueue_{j}}\frac{\etaRatio_{j}}{\nodeServiceRate_{j}(i)}\right)}\\
 & =\etaRatio_{0}\cdot\frac{\sum_{n_{0}=0}^{N-1}\left(\frac{\etaRatio_{0}}{\arrivalLS}\right)^{\nodeQueue_{0}}\sum_{\sum_{j\in\nodeSet}\nodeQueue_{j}=\poolSize-1-\nodeQueue_{0}}\prod_{j=1}^{\nodes}\left(\prod_{i=1}^{\nodeQueue_{j}}\frac{\etaRatio_{j}}{\nodeServiceRate_{j}(i)}\right)}{\sum_{n_{0}=0}^{N}\left(\frac{\etaRatio_{0}}{\arrivalLS}\right)^{\nodeQueue_{0}}\sum_{\sum_{j\in\nodeSet}\nodeQueue_{j}=\poolSize-\nodeQueue_{0}}\prod_{j=1}^{\nodes}\left(\prod_{i=1}^{\nodeQueue_{j}}\frac{\etaRatio_{j}}{\nodeServiceRate_{j}(i)}\right)}\\
 & =\etaRatio_{0}\cdot\frac{C_{\arrivalLS}(\nodeSetNull,\poolSize-1)}{C_{\arrivalLS}(\nodeSetNull,\poolSize)}.
\end{align*}

We further define for $L=1,2,\ldots,\poolSize$
\[
C(\nodeSet,L):=\sum_{\sum_{j\in\nodeSet}\nodeQueue_{j}=L}\prod_{j=1}^{\nodes}\left(\prod_{i=1}^{\nodeQueue_{j}}\frac{\etaRatio_{j}}{\nodeServiceRate_{j}(i)}\right)
\]
and obtain
\begin{equation}
C_{\arrivalLS}(\nodeSetNull,L)=C(\nodeSet,L)+\frac{\etaRatio_{0}}{\arrivalLS}\cdot C_{\arrivalLS}(\nodeSetNull,L-1),\quad L=1,2,\ldots,\poolSize.\label{eq:C-gleichung}
\end{equation}
Now we prove by induction
\[
\effarrival^{(\poolSize)}(\arrivalLS+\varepsilon)>\effarrival^{(\poolSize)}(\arrivalLS)\qquad\forall\arrivalLS>0,\ \varepsilon>0,\ \forall\poolSize=1,2,\ldots
\]
\textit{Base step}: For $\poolSize=1$:
\[
\effarrival^{(1)}(\arrivalLS+\varepsilon)-\effarrival^{(1)}(\arrivalLS)=\frac{\etaRatio_{0}}{\frac{\etaRatio_{0}}{\arrivalLS+\varepsilon}+\sum_{j=1}^{\nodes}\frac{\etaRatio_{j}}{\nodeServiceRate_{j}(1)}}-\frac{\etaRatio_{0}}{\frac{\etaRatio_{0}}{\arrivalLS}+\sum_{j=1}^{\nodes}\frac{\etaRatio_{j}}{\nodeServiceRate_{j}(1)}}>0.
\]
\textit{Induction step:} Assume the inequality holds for all $1\leq L\leq\poolSize-1$.
Then for $L=N$ we obtain
\begin{align*}
 & \frac{1}{\etaRatio_{0}}\cdot\left(\effarrival^{(\poolSize)}(\arrivalLS+\varepsilon)-\effarrival^{(\poolSize)}(\arrivalLS)\right)=\frac{C_{\arrivalLS+\varepsilon}(\nodeSetNull,\poolSize-1)}{C_{\arrivalLS+\varepsilon}(\nodeSetNull,\poolSize)}-\frac{C_{\arrivalLS}(\nodeSetNull,\poolSize-1)}{C_{\arrivalLS}(\nodeSetNull,\poolSize)}\\
 & =\frac{C_{\arrivalLS+\varepsilon}(\nodeSetNull,\poolSize-1)\cdot C_{\arrivalLS}(\nodeSetNull,\poolSize)-C_{\arrivalLS}(\nodeSetNull,\poolSize-1)\cdot C_{\arrivalLS+\varepsilon}(\nodeSetNull,\poolSize)}{C_{\arrivalLS+\varepsilon}(\nodeSetNull,\poolSize)\cdot C_{\arrivalLS}(\nodeSetNull,\poolSize)}.
\end{align*}
Because the denominator is strictly positive, it suffices to show
that the numerator is strictly positive. Using the induction assumption
we obtain from \prettyref{eq:C-gleichung} the following:
\begin{align*}
 & C_{\arrivalLS+\varepsilon}(\nodeSetNull,\poolSize-1)\cdot C_{\arrivalLS}(\nodeSetNull,\poolSize)-C_{\arrivalLS}(\nodeSetNull,\poolSize-1)\cdot C_{\arrivalLS+\varepsilon}(\nodeSetNull,\poolSize)\\
 & \stackrel{}{=}\left(C(\nodeSet,\poolSize-1)+\frac{\etaRatio_{0}}{\arrivalLS+\varepsilon}\cdot C_{\arrivalLS+\varepsilon}(\nodeSetNull,\poolSize-2)\right)\\
 & \phantomeq\phantomeq\cdot\left(C(\nodeSet,\poolSize)+\frac{\etaRatio_{0}}{\arrivalLS}\cdot C_{\arrivalLS}(\nodeSetNull,\poolSize-1)\right)\\
 & \phantomeq-\left(C(\nodeSet,\poolSize-1)+\frac{\etaRatio_{0}}{\arrivalLS}\cdot C_{\arrivalLS}(\nodeSetNull,\poolSize-2)\right)\\
 & \phantomeq\phantomeq\cdot\left(C(\nodeSet,\poolSize)+\frac{\etaRatio_{0}}{\arrivalLS+\varepsilon}\cdot C_{\arrivalLS+\varepsilon}(\nodeSetNull,\poolSize-1)\right)\\
 & =C(\nodeSet,\poolSize-1)\cdot C(\nodeSet,\poolSize)+C(\nodeSet,\poolSize-1)\cdot\frac{\etaRatio_{0}}{\arrivalLS}\cdot C_{\arrivalLS}(\nodeSetNull,\poolSize-1)\\
 & \phantomeq\quad+\frac{\etaRatio_{0}}{\arrivalLS+\varepsilon}\cdot C_{\arrivalLS+\varepsilon}(\nodeSetNull,\poolSize-2)\cdot C(\nodeSet,\poolSize)\\
 & \phantomeq\quad+\frac{\etaRatio_{0}}{\arrivalLS+\varepsilon}\cdot C_{\arrivalLS+\varepsilon}(\nodeSetNull,\poolSize-2)\cdot\frac{\etaRatio_{0}}{\arrivalLS}\cdot C_{\arrivalLS}(\nodeSetNull,\poolSize-1)\\
 & \phantomeq-C(\nodeSet,\poolSize-1)\cdot C(\nodeSet,\poolSize)-C(\nodeSet,\poolSize-1)\cdot\frac{\etaRatio_{0}}{\arrivalLS+\varepsilon}\cdot C_{\arrivalLS+\varepsilon}(\nodeSetNull,\poolSize-1)\\
 & \phantomeq\quad-\frac{\etaRatio_{0}}{\arrivalLS}\cdot C_{\arrivalLS}(\nodeSetNull,\poolSize-2)\cdot C(\nodeSet,\poolSize)\\
 & \phantomeq\quad-\frac{\etaRatio_{0}}{\arrivalLS}\cdot C_{\arrivalLS}(\nodeSetNull,\poolSize-2)\cdot\frac{\etaRatio_{0}}{\arrivalLS+\varepsilon}\cdot C_{\arrivalLS+\varepsilon}(\nodeSetNull,\poolSize-1)\\
 & =\frac{\etaRatio_{0}}{\arrivalLS+\varepsilon}\cdot\frac{\etaRatio_{0}}{\arrivalLS}\cdot\Big[C_{\arrivalLS+\varepsilon}(\nodeSetNull,\poolSize-2)\cdot C_{\arrivalLS}(\nodeSetNull,\poolSize-1)\\
 & \phantomeq\underbrace{\qquad\qquad\qquad\qquad\qquad\qquad\qquad\qquad\quad-C_{\arrivalLS}(\nodeSetNull,\poolSize-2)\cdot C_{\arrivalLS+\varepsilon}(\nodeSetNull,\poolSize-1)\Big]}_{=:A_{\poolSize}}\\
 & \phantomeq+\frac{\etaRatio_{0}}{\arrivalLS}\cdot C(\nodeSet,\poolSize-1)\cdot C_{\arrivalLS}(\nodeSetNull,\poolSize-1)+\frac{\etaRatio_{0}}{\arrivalLS+\varepsilon}\cdot C_{\arrivalLS+\varepsilon}(\nodeSetNull,\poolSize-2)\cdot C(\nodeSet,\poolSize)\\
 & \phantomeq-\frac{\etaRatio_{0}}{\arrivalLS+\varepsilon}\cdot C(\nodeSet,\poolSize-1)\cdot C_{\arrivalLS+\varepsilon}(\nodeSetNull,\poolSize-1)-\frac{\etaRatio_{0}}{\arrivalLS}\cdot C_{\arrivalLS}(\nodeSetNull,\poolSize-2)\cdot C(\nodeSet,\poolSize)\\
 & =A_{\poolSize}+\frac{\etaRatio_{0}}{\arrivalLS}\cdot\left[C(\nodeSet,\poolSize-1)\cdot C_{\arrivalLS}(\nodeSetNull,\poolSize-1)-C_{\arrivalLS}(\nodeSetNull,\poolSize-2)\cdot C(\nodeSet,\poolSize)\right]\\
 & \phantomeq+\frac{\etaRatio_{0}}{\arrivalLS+\varepsilon}\cdot\left[C_{\arrivalLS+\varepsilon}(\nodeSetNull,\poolSize-2)\cdot C(\nodeSet,\poolSize)-C(\nodeSet,\poolSize-1)\cdot C_{\arrivalLS+\varepsilon}(\nodeSetNull,\poolSize-1)\right].
\end{align*}
$A_{\poolSize}$ is a positive factor $\frac{\etaRatio_{0}}{\arrivalLS+\varepsilon}\cdot\frac{\etaRatio_{0}}{\arrivalLS}$
multiplied by the numerator of $\effarrival^{(\poolSize-1)}(\arrivalLS+\varepsilon)-\effarrival^{(\poolSize-1)}(\arrivalLS)$.
Therefore, $A_{\poolSize}>0$ by the induction assumption. Thus, it
suffices to prove
\begin{align*}
 & \frac{\etaRatio_{0}}{\arrivalLS}\cdot\left[C(\nodeSet,\poolSize-1)\cdot C_{\arrivalLS}(\nodeSetNull,\poolSize-1)-C_{\arrivalLS}(\nodeSetNull,\poolSize-2)\cdot C(\nodeSet,\poolSize)\right]\\
 & \phantomeq+\frac{\etaRatio_{0}}{\arrivalLS+\varepsilon}\cdot\left[C_{\arrivalLS+\varepsilon}(\nodeSetNull,\poolSize-2)\cdot C(\nodeSet,\poolSize)-C(\nodeSet,\poolSize-1)\cdot C_{\arrivalLS+\varepsilon}(\nodeSetNull,\poolSize-1)\right]\\
 & \geq0,
\end{align*}
which is equivalent to
\begin{align*}
 & \frac{\etaRatio_{0}}{\arrivalLS}\cdot\left[\frac{C(\nodeSet,\poolSize-1)}{C(\nodeSet,\poolSize)}-\frac{C_{\arrivalLS}(\nodeSetNull,\poolSize-2)}{C_{\arrivalLS}(\nodeSetNull,\poolSize-1)}\right]\cdot C(\nodeSet,\poolSize)\cdot C_{\arrivalLS}(\nodeSetNull,\poolSize-1)\\
 & -\frac{\etaRatio_{0}}{\arrivalLS+\varepsilon}\cdot\left[\frac{C(\nodeSet,\poolSize-1)}{C(\nodeSet,\poolSize)}-\frac{C_{\arrivalLS+\varepsilon}(\nodeSetNull,\poolSize-2)}{C_{\arrivalLS+\varepsilon}(\nodeSetNull,\poolSize-1)}\right]\cdot C(\nodeSet,\poolSize)\cdot C_{\arrivalLS+\varepsilon}(\nodeSetNull,\poolSize-1)\\
 & =\Bigg\{\underbrace{\left[\frac{C(\nodeSet,\poolSize-1)}{C(\nodeSet,\poolSize)}-\frac{C_{\arrivalLS}(\nodeSetNull,\poolSize-2)}{C_{\arrivalLS}(\nodeSetNull,\poolSize-1)}\right]}_{=:B}\cdot\underbrace{\frac{\etaRatio_{0}}{\arrivalLS}\cdot C_{\arrivalLS}(\nodeSetNull,\poolSize-1)}_{=:D}\\
 & \phantomeq-\underbrace{\left[\frac{C(\nodeSet,\poolSize-1)}{C(\nodeSet,\poolSize)}-\frac{C_{\arrivalLS+\varepsilon}(\nodeSetNull,\poolSize-2)}{C_{\arrivalLS+\varepsilon}(\nodeSetNull,\poolSize-1)}\right]}_{=:C}\cdot\underbrace{\frac{\etaRatio_{0}}{\arrivalLS+\varepsilon}\cdot C_{\arrivalLS+\varepsilon}(\nodeSetNull,\poolSize-1)}_{=:E}\Bigg\}\cdot C(\nodeSet,\poolSize)\\
 & \geq0.
\end{align*}

(i) We show that $B\geq0$, $C\geq0$.

Van der Wal \Citep{vanderWal1989} shows that an increasing population
size increases throughput, hence 
\[
\frac{C(\nodeSet,\poolSize-1)}{C(\nodeSet,\poolSize)}\geq\frac{C(\nodeSet,\poolSize-2)}{C(\nodeSet,\poolSize-1)}.
\]

Furthermore, we show
\[
\frac{C(\nodeSet,\poolSize-2)}{C(\nodeSet,\poolSize-1)}\geq\frac{C_{\arrivalLS}(\nodeSetNull,\poolSize-2)}{C_{\arrivalLS}(\nodeSetNull,\poolSize-1)}.
\]
We consider the right-hand side as throughput of a cyclic Gordon-Newell
network\footnote{$\frac{C(\nodeSet,\poolSize-2)}{C(\nodeSet,\poolSize-1)}$ and $\frac{C_{\arrivalLS}(\nodeSet,\poolSize-2)}{C_{\arrivalLS}(\nodeSet,\poolSize-1)}$
as given in our derivations are the (average) throughput of a cycle
following \citet[Definition 2.6]{Daduna2008}.} with node set $\nodeSetNull:=\left\{ 0,1,\ldots,\nodes\right\} $,
service rates $\mu_{j}(n):=\frac{\nodeServiceRate_{j}(i)}{\etaRatio_{j}}$,
$j=1,\ldots,\nodes$, $n=0,1,\ldots,\poolSize$ and $\mu_{0}(n):=\frac{\arrivalLS}{\etaRatio_{0}}$
and solution $\underbrace{(1,1,\ldots,1)}_{J+1\text{-times}}$ of
the routing matrix for the cycle.

The left-hand side is the throughput of a cyclic Gordon-Newell network
which is obtained from the first cycle by deleting node $0$ and skipping
the gap by the cycling customers.

Lemma 2.8 in \citet{Daduna2008} states that deleting any node of
a cycle with skipping the gap increases the throughput. Consequently,
$B\geq0$ in a two-step conclusion, and similarly $C\geq0$.

(ii) From definition of $D$ and $E$ follows by direct comparison
$D>E$.

(iii) The proof will be finished if we can show $B\geq C$. To do
so, it is sufficient to prove with \citet{SHANTHIKUMAR1986259}
\begin{equation}
\frac{C_{\arrivalLS}(\nodeSetNull,\poolSize-2)}{C_{\arrivalLS}(\nodeSetNull,\poolSize-1)}\leq\frac{C_{\arrivalLS+\varepsilon}(\nodeSetNull,\poolSize-2)}{C_{\arrivalLS+\varepsilon}(\nodeSetNull,\poolSize-1)}.\label{eq:Daduna-2}
\end{equation}
The left-hand side is the throughput of the Gordon-Newell network
according to Definition (2.2) in \citet{SHANTHIKUMAR1986259}.

The right-hand side is the throughput of the Gordon-Newell network
with service rate at node $0$ increased to $\arrivalLS+\varepsilon$.

Corollary 3.1(i) in \citet{SHANTHIKUMAR1986259} states
\[
\etaRatio_{0}\cdot\frac{C_{\arrivalLS}(\nodeSetNull,\poolSize-2)}{C_{\arrivalLS}(\nodeSetNull,\poolSize-1)}\leq\etaRatio_{0}\cdot\frac{C_{\arrivalLS+\varepsilon}(\nodeSetNull,\poolSize-2)}{C_{\arrivalLS+\varepsilon}(\nodeSetNull,\poolSize-1)},
\]
which verifies \prettyref{eq:Daduna-2}.
\end{proof}
\begin{rem}
\label{rem:lambda-eff-N-1-N-2} For $\arrival\in\left(0,\maxArrival\right)=\left(0,\etaRatio_{0}\cdot\tfrac{\abrevC 1}{\abrevC 0}\right)$
it holds
\[
\arrivalLS=\begin{cases}
\frac{\etaRatio_{0}\cdot\arrival}{\etaRatio_{0}-\arrival\cdot\abrevC 0}, & \poolSize=1,\\
-\frac{\etaRatio_{0}}{2\cdot\big(\etaRatio_{0}\cdot\abrevC 1-\arrival\cdot\abrevC 0\big)}\left(\etaRatio_{0}-\arrival\cdot\abrevC 1-\sqrt{\big(\etaRatio_{0}+\arrival\cdot\abrevC 1\big){}^{2}-4\cdot\arrival^{2}\cdot\abrevC 0}\right), & \poolSize=2.
\end{cases}
\]
\end{rem}

\begin{proof}
Due to \prettyref{thm:theorem-ls} we have
\[
\effarrival(\arrivalLS)=\arrivalLS\cdot\left(1-\frac{\abrevC 0}{\sum_{n=0}^{\poolSize}\left(\frac{\arrivalLS}{\etaRatio_{0}}\right)^{n}\cdot\abrevC n}\right)=\arrivalLS\cdot\left(\frac{\sum_{n=1}^{\poolSize}\abrevC n\cdot\arrivalLS\cdot\left(\frac{\arrivalLS}{\etaRatio_{0}}\right)^{\poolSize-n}}{\sum_{n=0}^{\poolSize}\abrevC n\cdot\left(\frac{\arrivalLS}{\etaRatio_{0}}\right)^{\poolSize-n}}\right)
\]
for $\arrivalLS\in(0,\infty)$ and
\[
\abrevC{\poolSize}=1,\quad\abrevC{\poolSize-1}=\sum_{j=1}^{\nodes}\frac{\etaRatio_{j}}{\nodeServiceRate_{j}(1)}
\]
and
\[
\abrevC{\poolSize-2}=\sum_{j=1}^{\nodes}\frac{\etaRatio_{j}}{\nodeServiceRate_{j}(1)\cdot\nodeServiceRate_{j}(2)}+\sum_{j=1}^{\nodes-1}\sum_{k=j+1}^{\nodes}\frac{\etaRatio_{j}\cdot\etaRatio_{k}}{\nodeServiceRate_{j}(1)\cdot\nodeServiceRate_{k}(1)}.
\]
Let $\arrival\in\left(0,\etaRatio_{0}\cdot\tfrac{\abrevC 1}{\abrevC 0}\right)$.
First, we note that
\begin{equation}
\etaRatio_{0}\cdot\abrevC 1-\arrival\cdot\abrevC 0>\etaRatio_{0}\cdot\abrevC 1-\etaRatio_{0}\cdot\frac{\abrevC 1}{\abrevC 0}\cdot\abrevC 0=0.\label{eq:uniqueness_1}
\end{equation}
The equation $\effarrival(\arrivalLS)=\arrival$ is equivalent to
\[
\frac{\sum_{n=1}^{\poolSize}\left(\frac{\arrivalLS}{\etaRatio_{0}}\right)^{\poolSize+1-n}\cdot\etaRatio_{0}\cdot\abrevC n}{\sum_{n=0}^{N}\left(\frac{\arrivalLS}{\etaRatio_{0}}\right)^{\poolSize-n}\cdot\abrevC n}=\arrival
\]
and this to
\begin{equation}
\sum_{n=1}^{\poolSize}\big(\etaRatio_{0}\cdot\abrevC n-\arrival\cdot\abrevC{n-1}\big)\cdot\left(\frac{\arrivalLS}{\etaRatio_{0}}\right)^{\poolSize+1-n}-\arrival\cdot\abrevC{\poolSize}=0.\label{eq:uniqueness_2}
\end{equation}
If $\poolSize=1$, then \eqref{eq:uniqueness_2} is equivalent to
\[
\arrivalLS=\frac{\etaRatio_{0}\cdot\arrival\cdot\abrevC 1}{\etaRatio_{0}\cdot\abrevC 1-\arrival\cdot\abrevC 0}=\frac{\etaRatio_{0}\cdot\arrival}{\etaRatio_{0}-\arrival\cdot\abrevC 0}.
\]
If $\poolSize=2$, then \eqref{eq:uniqueness_2} is equivalent to
\[
\big(\etaRatio_{0}\cdot\abrevC 1-\arrival\cdot\abrevC 0\big)\cdot\left(\frac{\arrivalLS}{\etaRatio_{0}}\right)^{2}+\big(\etaRatio_{0}\cdot\abrevC 2-\arrival\cdot\abrevC 1\big)\cdot\left(\frac{\arrivalLS}{\etaRatio_{0}}\right)-\abrevC 2\cdot\arrival=0.
\]
Since the discriminant fulfils
\[
\big(\etaRatio_{0}\cdot\abrevC 2-\arrival\cdot\abrevC 1\big)^{2}+4\cdot\big(\etaRatio_{0}\cdot\abrevC 1-\arrival\cdot\abrevC 0\big)\cdot\abrevC 2\cdot\arrival>0
\]
by \eqref{eq:uniqueness_1}, it follows that
\begin{align*}
\frac{\arrivalLS}{\etaRatio_{0}} & =-\frac{1}{2\cdot\big(\etaRatio_{0}\cdot\abrevC 1-\arrival\cdot\abrevC 0\big)}\\
 & \phantomeq\cdot\Bigl(\etaRatio_{0}\cdot\abrevC 2-\arrival\cdot\abrevC 1\pm\sqrt{\big(\etaRatio_{0}\cdot\abrevC 2-\arrival\cdot\abrevC 1\big)^{2}+4\cdot\big(\etaRatio_{0}\cdot\abrevC 1-\arrival\cdot\abrevC 0\big)\cdot\abrevC 2\cdot\arrival}\Bigr).
\end{align*}
Due to \eqref{eq:uniqueness_1}, the solution $\arrivalLS$ is positive
only if
\[
\etaRatio_{0}\cdot\abrevC 2-\arrival\cdot\abrevC 1\pm\sqrt{\big(\etaRatio_{0}\cdot\abrevC 2-\arrival\cdot\abrevC 1\big)^{2}+4\cdot\big(\etaRatio_{0}\cdot\abrevC 1-\arrival\cdot\abrevC 0\big)\cdot\abrevC 2\cdot\arrival}<0.
\]
Let $c:=\etaRatio_{0}\cdot\abrevC 2-\arrival\cdot\abrevC 1$ and $d:=4\cdot\big(\etaRatio_{0}\cdot\abrevC 1-\arrival\cdot\abrevC 0\big)\cdot\abrevC 2\cdot\arrival$
which implies $d>0$. If $c\geq0$, then obviously $c+\sqrt{c^{2}+d}>0$.
If $c<0$, then
\[
c+\sqrt{c^{2}+d}>c+\sqrt{c^{2}}=c+|c|=c-c=0.
\]
Hence, the only positive solution (existence guaranteed by \prettyref{thm:theorem-ls})
has to be
\begin{align*}
\arrivalLS & =-\frac{\etaRatio_{0}}{2\cdot\big(\etaRatio_{0}\cdot\abrevC 1-\arrival\cdot\abrevC 0\big)}\\
 & \phantomeq\cdot\Bigl(\etaRatio_{0}\cdot\abrevC 2-\arrival\cdot\abrevC 1-\sqrt{\big(\etaRatio_{0}\cdot\abrevC 2-\arrival\cdot\abrevC 1\big)^{2}+4\cdot\big(\etaRatio_{0}\cdot\abrevC 1-\arrival\cdot\abrevC 0\big)\cdot\abrevC 2\cdot\arrival}\Bigr)\\
 & =-\frac{\etaRatio_{0}}{2\cdot\big(\etaRatio_{0}\cdot\abrevC 1-\arrival\cdot\abrevC 0\big)}\cdot\left(\etaRatio_{0}-\arrival\cdot\abrevC 1-\sqrt{\big(\etaRatio_{0}+\arrival\cdot\abrevC 1\big)^{2}-4\cdot\arrival^{2}\cdot\abrevC 0}\right).
\end{align*}
\end{proof}

\end{appendices}

\newpage{}

\bibliographystyle{plainnat}
\bibliography{main}

\end{document}